\documentclass[11pt]{article}
\usepackage{mathrsfs}
\usepackage{amsmath}
\usepackage{bbm}
\usepackage{amsmath,amsthm,amssymb,amscd}
\usepackage{latexsym}
\usepackage{aliascnt}
\usepackage{hyperref}
\usepackage[numbers,sort&compress]{natbib}
\usepackage{CJK}
\usepackage{graphicx}
\usepackage[export]{adjustbox}
\usepackage{caption}
\newtheorem{Theorem}{Theorem}[section]
\newtheorem{Corollary}{Corollary}[section]
\newtheorem{Lemma}{Lemma}[section]
\newtheorem{Claim}{Claim}[section]
\newtheorem{Remark}{Remark}[section]
\theoremstyle{definition}
\newtheorem{Definition}{Definition}[section]
\newtheorem{Example}{Example}[section]

\numberwithin{equation}{section}

\begin{document}
\title{{\bf Asymptotic pseudo-periodicity of solutions for nonlinear fractional evolution equations}\thanks{The work was supported partly by the NSF of China (12371116, 12171094) and the Shanghai Key Laboratory for Contemporary Applied Mathematics (08DZ2271900).}}
\author{Jin Liang $^{a}$,  Yunyi Mu $^{b}$\thanks{Corresponding author, Email: muyy@sdju.edu.cn}, Gaston Mandata N'Gu\'{e}r\'{e}kata $^{c}$, Ti-Jun Xiao $^{d}$ \\
{\small $^a$ School of Mathematical Sciences, Shanghai Jiao Tong University}\\
{\small Shanghai 200240, China}\\
{\small $^b$ School of Integrated Circuits, Shanghai Dianji University}\\
{\small  Shanghai 201306, China}\\
{\small $^c$ School of Computer, Mathematical and Natural Sciences,}\\
{\small  Morgan State University, Baltimore, M.D. 21251, USA}\\
{\small $^d$ Shanghai Key Laboratory for Contemporary Applied Mathematics}\\
{\small School of Mathematical Sciences, Fudan University}\\
{\small Shanghai 200433, China}}

\date{}
\maketitle

\begin{abstract}
This work focuses on the dynamic behaviors, particularly the asymptotic periodicity, of solutions to nonlinear fractional-order dynamical systems modeled by Caputo fractional evolution equations in Banach spaces. We propose a new class of weighted pseudo $S$-asymptotically $(\xi,\tau,a)$-periodic functions, which unifies well-established function types such as standard periodic functions and $S$-asymptotically periodic functions as its special instances. We then systematically derive the fundamental properties of this new function family, and rigorously prove its completeness, corresponding convolution theorems, and nonlinear composition rules. Building on these newly established results, we demonstrate that under reasonable hypotheses, the mild solutions to a broad class of nonlinear fractional-order dynamical systems --- both delay-free and with finite delay --- in Banach spaces admit the (weighted pseudo) $S$-asymptotically $(\xi,\tau,a)$-periodic property, which further implies that no bifurcation or chaotic behavior emerges in these fractional-order systems. The theoretical conclusions are finally validated by illustrative examples describing memory-dependent complex dynamic processes, supplemented with supporting numerical simulations. Additionally, the presented weighted pseudo $S$-asymptotically $(\xi,\tau,a)$-periodic framework can be extended to functions with first-kind discrete discontinuities, so that systems with discontinuous nonlinear terms can be handled via the same analytical paradigm.

\vspace{0.5cm}

\noindent \textbf{Keywords:}  Asymptotic pseudo-periodicity; Weighted pseudo $\emph{S}$-asymptotically $(\xi,\tau,a)$-periodic; Caputo fractional; Evolution equations

\vspace{0.4cm}

\noindent AMS Subject Classification: 34K14, 34G20, 42A75.
 \end{abstract}

\section{Introduction}

Fractional differential systems(FDSs), as a pivotal application of fractional calculus, have emerged as a powerful mathematical framework offering
more precise modeling capabilities than integer-order systems across diverse domains. These systems find extensive applications in complex material
characterization (fractal media, viscoelastic systems), emerging technologies (semiconductor physics, robotics control), fundamental sciences
(quantum mechanics, statistical physics), interdisciplinary fields (biomathematics, financial modeling), etc.
Recent years have witnessed substantial progress in utilizing FDSs to model real-world phenomena with memory effects.
For comprehensive theoretical foundations and applications on FDSs, readers may refer to
\cite{Abbas,Ansari,Biligiri,Chali5,Chali6,Chali8,Chali7,Borai,Hilfer,Hilferl,Kilbas,Kulkarni,LiL1,Liangm5,Liu,Luo,Mainardi,Pagnini,Podlubny}
 and the references therein.

It is well-known that the qualitative analysis of long-term  dynamic behaviors, such as asymptotic periodicity, stability, and bifurcation, is central to the study of nonlinear dynamical systems. Many physical and engineered systems governed by fractional-order models exhibit asymptotic or quasi-periodic oscillations rather than strict periodicity, due to complex interplays of memory, damping, and external perturbations. Investigating these generalized periodic patterns is crucial for understanding system robustness, performance, and control strategies. While considerable attention has been paid to classical periodic, almost periodic, and pseudo-almost periodic solutions in integer-order dynamics, the corresponding theoryfor fractional-order nonlinear evolution equations in infinite-dimensional settings (Banach spaces) remains less explored, particularly concerning weighted and $\emph{S}$-asymptotically type periodicities.
Key generalizations of periodic functions include almost periodic functions, pseudo almost periodic functions, and weighted pseudo almost periodic functions,
 $\emph{S}$-asymptotically $\tau$-periodic functions, etc.
Research on these generalizations of periodic functions can be found in references \cite{Blot,Bochner,Henri}.
 In \cite{Pierri}, the authors proposed the notion of pseudo $\emph{S}$-asymptotically periodic functions, investigated its qualitative aspects,
 and explored the existence of pseudo $\emph{S}$-asymptotically periodic mild solutions for a class of first-order abstract neutral differential equations.
  In \cite{LiL1}, the authors investigated the existence and uniqueness of $\emph{S}$-asymptotically $\tau$-periodic solutions to fractional differential
  equations of order $q\in (0,1)$ with finite delay in a Banach space $\mathbb{E}$. They also applied their existence and uniqueness results to derive
  the $\emph{S}$-asymptotically $\tau$-periodic solutions for fractional-order autonomous neural networks with delay. In \cite{Alvarez},
  the authors analyzed the $(\tau,a)$-periodic functions by examining the property $\nu(\lambda+\tau)=a\nu(\lambda)\ (a\in \mathbb{C})$ of any solution $\nu(\lambda)$ of
  the well-known Mathieu's equations $\nu^{\prime\prime}+a\nu=2p{\rm{cos}}(2t)\nu$, which is a Hill's equation with a single harmonic mode.
  It is evident that when $a= 1$, the $(\tau,a)$-periodic functions reduce to the classic $\tau$-periodic functions, and when $a=-1$,
  they reduce to the $\tau$-antiperiodic functions. For more interesting works for generalizations of periodic functions and their applications
  in various differential equations, readers may refer to \cite{Agar2,Cieutat,Cue,Cue0,dos1,Oueama} and the references therein.

Stimulated by the works above, in this paper, we will firstly investigate the $w$-pseudo $\emph{S}$-asymptotically $(\xi,\tau,a)$-periodic solutions
for the following nonlinear Caputo fractional evolution system in a Banach space $\mathbb{E}$:
\begin{equation}\label{equation2}
\left\{
\begin{array}{lr}
\left(^c\mathcal{D}^{\tilde{\iota}}_\lambda \nu\right)(\lambda)=\Upsilon\nu(\lambda)+\varrho(\lambda,\nu(\lambda)),\ \lambda>0,\\
\nu(0)=\nu_0,
\end{array}
\right.
\end{equation}
where $^c\mathcal{D}^{\tilde{\iota}}_\lambda\ (0<\tilde{\iota}<1)$ is the Caputo fractional derivative of order $\tilde{\iota}$;  the closed operator
 $\Upsilon$ ($D(\Upsilon)\subseteq \mathbb{E}$) is the infinitesimal generator of a $C_0$ semigroup $\Psi(\lambda)$ on $[0,\infty)$;
 $\nu_0\in \mathbb{E}$; $\varrho:\ [0,\infty)\times \mathbb{E}\rightarrow \mathbb{E}$ is a given function to be specified  later.

Secondly, we will consider the $\emph{S}$-asymptotically $(\tau,a)$-periodic solutions for the nonlinear Caputo fractional evolution
system with finite delay below in a Banach space $\mathbb{E}$:
\begin{equation}\label{equation1}
\left\{
\begin{array}{lr}
\left(^c\mathcal{D}^{\tilde{\iota}}_\lambda \nu\right)(\lambda)=\Upsilon\nu(\lambda)+\varrho(\lambda,\nu_\lambda),\ \lambda>0,\\
\nu(\lambda)=\eta(\lambda),\qquad \qquad \qquad \qquad \ \lambda\in [-\kappa,0],
\end{array}
\right.
\end{equation}
where $^c\mathcal{D}^{\tilde{\iota}}_\lambda\ (0<\tilde{\iota}<1)$  is the Caputo fractional derivative of order $\tilde{\iota}$;  $\kappa>0$;
the closed operator $\Upsilon$ ($D(\Upsilon)\subseteq \mathbb{E}$) generates a $C_0$ semigroup $\Psi(\lambda)$ on $[0,\infty)$;
 $\varrho:\ [0,\infty)\times C([-\mu,0],\mathbb{E})\rightarrow \mathbb{E}$ is a given function to be defined subsequently;
 $\nu_\lambda:\ [-\kappa,0]\rightarrow \mathbb{E}$ is defined by
$\nu_\lambda(\theta)=\nu(\lambda+\theta)$ for $\theta\in [-\kappa,0]$, and $\eta\in C([-\kappa,0],\mathbb{E})$.

The research on $\emph{S}$-asymptotically $(\tau,a)$-periodic solutions for fractional evolution system in Banach spaces is blank,
 and this paper fills this gap. The rest of this paper is organized as follows. In Section 2, we give some notations and recall some
  basic known results which will be used throughout the paper. In Section 3, we introduce the notion of weighted pseudo $\emph{S}$-asymptotically
  $(\xi,\tau,a)$-periodic functions and present some basic properties of such functions. In Section 4, we investigate the weighted pseudo
  $\emph{S}$-asymptotically $(\xi,\tau,a)$-periodic mild solutions for time-fractional Caputo evolution nonlinear differential system
  $(\ref{equation2})$ on $[0,\infty)$ in a Banach space $\mathbb{E}$. In Section 5, we discuss the $\emph{S}$-asymptotically $(\tau,a)$-periodic
   mild solutions for Caputo fractional nonlinear evolution differential system $(\ref{equation1})$ on $[0,\infty)$ with finite delay in a
   Banach space $\mathbb{E}$. Finally, in Section 6, two examples are presented to show the applicability of the abstract results.

\section{Preliminaries}

 We begin by establishing basic notation for subsequent analysis. Let $\mathbb{C}$ , $\mathbb{R}$ and $\mathbb{N}_+$ be the sets of complex numbers, real numbers and positive integers. Consider two complex Banach spaces $(\mathbb{E}, \|\cdot\|_{\mathbb{E}})$ and $(\mathbb{F}, \|\cdot\|_{\mathbb{F}})$.
  When no confusion arises, we denote the norm $\|\cdot\|_{\mathbb{E}}$ simply by $\|\cdot\|$.
 We define $C(\mathbb{R}, \mathbb{E})$ and $C([0,\infty), \mathbb{E})$ to be the spaces of all continuous functions mapping $\mathbb{R}$ and $[0,\infty)$ to $\mathbb{E}$, respectively.
We denote $ C^1([0,\infty),\mathbb{E})$ the space of all continuously differentiable functions from $[0,\infty)$ to $\mathbb{E}$.
 We also write $C(\mathbb{R} \times \mathbb{F}, \mathbb{E})$ for the family of all continuous functions $h: \mathbb{R} \times \mathbb{F} \to \mathbb{E}$.
 The space of all bounded linear transformations from $\mathbb{E}$ to itself is represented by $\mathcal{B}(\mathbb{E})$.
 The collection of all bounded continuous mappings from $\mathbb{R}$ to $\mathbb{E}$ is denoted by $BC(\mathbb{R}, \mathbb{E})$, equipped with the supremum norm $\|h\|_{\infty} = \mathop{\sup}\limits_{\theta \in \mathbb{R}} \|h(\theta)\|_{\mathbb{E}}$.
The space of bounded continuous functions from the non-negative real line $[0, \infty)$ to $\mathbb{E}$ is $C_b([0, \infty), \mathbb{E})$, equipped with the norm $\|h\|_{\infty} = \mathop{\sup}\limits_{\theta \geq 0} \|h(\theta)\|_{\mathbb{E}}$.
For a fixed $\kappa > 0$, $C([-\kappa, 0], \mathbb{E})$ represents the Banach space of continuous functions from the interval $[-\kappa, 0]$ to $\mathbb{E}$, with the norm $\|h\|_{[-\kappa,0]} =  \mathop{\sup}\limits_{\theta \in [-\kappa,0]} \|h(\theta)\|_{\mathbb{E}}$.
Given a function $h \in C([-\kappa, \infty), \mathbb{E})$ and a point $\lambda \geq 0$, we define the history segment $h_\lambda \in C([-\kappa, 0], \mathbb{E})$ by $h_\lambda(\theta) := h(\lambda + \theta)$ for all $\theta \in [-\kappa, 0]$.

We now present essential definitions and properties concerning weight functions and $(\tau, a)$-periodic functions. For more details, the reader is referred to \cite{Alvarez,Liangm7}.

For a function $h: \mathbb{R} \to \mathbb{E}$, we define its translation operator $h_r(\lambda) := h(\lambda + r)$ for each $r \in \mathbb{R}$.
Let $\mathcal{W}$ represent the collection of all locally integrable weight functions $w: \mathbb{R} \to [0, \infty)$ that are positive almost everywhere.
 We denote $\mathcal{W}_{b}$ as the set of all functions $w$ belonging to $\mathcal{W}$ which are bounded and possess a positive infimum over $\mathbb{R}$, i.e., $\displaystyle{\inf_{\theta \in \mathbb{R}}} w(\theta) > 0$.
 For $w \in \mathcal{W}$ and $\ell > 0$, we define $$\sigma(\ell, w) := \displaystyle{\int_{-\ell}^{\ell} w(\theta) d\theta}, \quad \mbox{or} \quad \displaystyle{\int_{0}^{\ell} w(\theta) d\theta}$$ according to the context.
 In this study, we assume that any weight function $w$ involved belongs to $\mathcal{W}$.

We introduce the following functional spaces for later use:
$$\mathcal{B}_w := \left\{h: \mathbb{R} \rightarrow [0,\infty) \Big | \ h \text{ is locally integrable and } \lim_{\ell \to \infty} \frac{1}{\sigma(\ell, w)} \int_{-\ell}^{\ell} h(\theta) w(\theta) d\theta < \infty \right\},$$
and
$$L^q(\mathbb{R}, [0,\infty), w) := \left\{h: \mathbb{R} \to [0,\infty) \Big | \ h \text{ is locally integrable and } \int_{-\infty}^{\infty} h^q(\theta) w(\theta) d\theta < \infty \right\},$$
where $q \in [1, \infty)$.

\begin{Definition}
For $w_i \in \mathcal{W}$ ($i = 1, 2$), we say that $w_1$ is equivalent to $w_2$, denoted $w_1 \sim w_2$, if and only if their quotient $w_1 / w_2$ belongs to $\mathcal{W}_b$.
\end{Definition}

\begin{Definition}
For any given $0\neq a \in \mathbb{C} $ and $\tau >0$, a function $h \in C(\mathbb{R}, \mathbb{E})$ is said to be $(\tau, a)$-periodic if it satisfies the condition
$$h(\lambda + \tau) = a h(\lambda), \quad  \lambda \in \mathbb{R}.$$
In this case, $\tau$ is called the $a$-period of $h$.
\end{Definition}

We denote by $\mathcal{P}_{\tau,a}(\mathbb{R}, \mathbb{E})$ the set of all $(\tau,a)$-periodic mappings from $\mathbb{R}$ to $\mathbb{E}$.
In the particular cases where $a=1$ and $a=-1$, this set reduces to the space of $\tau$-periodic and $\tau$-antiperiodic functions from $\mathbb{R}$ to $\mathbb{E}$, respectively.
For conciseness, the notation $\mathcal{P}_\tau(\mathbb{R}, \mathbb{E})$ is frequently used instead of $\mathcal{P}_{\tau,1}(\mathbb{R}, \mathbb{E})$.

Utilizing the principal logarithm, we define $$a^ \uparrow (\lambda):=a^{\frac{\lambda}{\tau}} = \exp\left(\frac{\lambda}{\tau}Log(a)\right).$$
Additionally, we employ the notation $$|a|^\uparrow(\lambda) = |a^\uparrow(\lambda)| = |a|^{\frac{\lambda}{\tau}}.$$

\begin{Lemma}\label{dbhjddkd}
Let $h \in C(\mathbb{R}, \mathbb{E})$. Then $h \in \mathcal{P}_{\tau,a}(\mathbb{R}, \mathbb{E})$ if and only if $h(\lambda) = a^\uparrow(\lambda) h_1(\lambda)$, where $h_1(\lambda) \in \mathcal{P}_\tau(\mathbb{R}, \mathbb{E})$.
\end{Lemma}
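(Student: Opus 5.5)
The idea is to use the exponential function $a^\uparrow$ defined just above the statement as an explicit factor. Define $h_1(\lambda):=a^{-\frac{\lambda}{\tau}}h(\lambda)$, where $a^{-\frac{\lambda}{\tau}}=\exp\left(-\frac{\lambda}{\tau}Log(a)\right)=1/a^\uparrow(\lambda)$. We then check periodicity in each direction. The only property of $a^\uparrow$ needed is the functional identity
$$a^\uparrow(\lambda+\tau)=\exp\left(\frac{\lambda+\tau}{\tau}Log(a)\right)=\exp\left(\frac{\lambda}{\tau}Log(a)\right)\exp(Log(a))=a\,a^\uparrow(\lambda),\quad \lambda\in\mathbb{R}.$$
It holds because $\exp(Log(a))=a$ for every $a\neq 0$, and $\exp$ turns sums into products. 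We also need that $a^\uparrow$ is continuous and never vanishes, which is true since it is an exponential.

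\emph{Necessity.} Let $h\in\mathcal{P}_{\tau,a}(\mathbb{R},\mathbb{E})$ and set $h_1(\lambda)=(a^\uparrow(\lambda))^{-1}h(\lambda)$. Then $h_1$ is continuous, being a product of continuous functions. Moreover,
$$h_1(\lambda+\tau)=\frac{h(\lambda+\tau)}{a^\uparrow(\lambda+\tau)}=\frac{a\,h(\lambda)}{a\,a^\uparrow(\lambda)}=h_1(\lambda),$$
so $h_1\in\mathcal{P}_\tau(\mathbb{R},\mathbb{E})$. By construction $h=a^\uparrow h_1$.

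\emph{Sufficiency.} Let $h=a^\uparrow h_1$ with $h_1\in\mathcal{P}_\tau(\mathbb{R},\mathbb{E})$. Then
$$h(\lambda+\tau)=a^\uparrow(\lambda+\tau)h_1(\lambda+\tau)=a\,a^\uparrow(\lambda)h_1(\lambda)=a\,h(\lambda),$$
and $h$ is continuous, so $h\in\mathcal{P}_{\tau,a}(\mathbb{R},\mathbb{E})$.

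There is no real obstacle here. The one point needing care is the branch of the logarithm: the identity $\exp(Log(a))=a$ is valid for any branch, and the principal one fixed in the paper is used consistently on both sides. Because of this, the multiplicative rule $a^\uparrow(\lambda+\tau)=a\,a^\uparrow(\lambda)$ holds without any branch-cut issues, even though $a^{x}a^{y}=a^{x+y}$ can fail for general complex powers. That makes it safe to cancel the factor $a$ in the computations above.
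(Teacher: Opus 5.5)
Your proof is correct. The paper states this lemma without proof, quoting it from earlier work, and your argument ($h_1(\lambda)=a^\uparrow(-\lambda)h(\lambda)$ together with $a^\uparrow(\lambda+\tau)=a\,a^\uparrow(\lambda)$) is the standard one. One small correction to your closing remark: once the base $a$ and the value $Log(a)$ are fixed, $a^{x}a^{y}=a^{x+y}$ never fails, because $\exp$ is a homomorphism from $(\mathbb{C},+)$ to $(\mathbb{C}\setminus\{0\},\cdot)$; the identities that can fail for complex powers are ones like $(ab)^{x}=a^{x}b^{x}$ or $(a^{x})^{y}=a^{xy}$.
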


\begin{Lemma}\label{dnfjk}
The set $\mathcal{P}_{\tau,a}(\mathbb{R}, \mathbb{E})$ forms a Banach space under the norm
$$\|h\|_{\tau a} = \sup_{\lambda \in [0,\tau]} \|a^\uparrow(-\lambda) h(\lambda)\|.$$
\end{Lemma}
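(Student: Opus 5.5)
The plan is to transport the problem to the space of $\tau$-periodic functions through Lemma \ref{dbhjddkd}, and then use the classical fact that $\mathcal{P}_\tau(\mathbb{R},\mathbb{E})$ is a closed subspace of $BC(\mathbb{R},\mathbb{E})$. Consider the map $\Phi: \mathcal{P}_{\tau,a}(\mathbb{R},\mathbb{E})\to \mathcal{P}_\tau(\mathbb{R},\mathbb{E})$ defined by $\Phi h(\lambda)=a^\uparrow(-\lambda)h(\lambda)$. It is well defined: since $a^\uparrow(\lambda+\tau)=a\,a^\uparrow(\lambda)$ and $a^\uparrow(-\lambda)a^\uparrow(\lambda)=1$ (the exponent is linear in $\lambda$), we get
$$\Phi h(\lambda+\tau)=a^{-1}a^\uparrow(-\lambda)\,a h(\lambda)=\Phi h(\lambda).$$
The map is linear and bijective, with inverse $g\mapsto a^\uparrow(\cdot)g(\cdot)$; this is exactly Lemma \ref{dbhjddkd}.

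Next I check that $\|\cdot\|_{\tau a}$ is a norm and identify it with the sup norm. A $\tau$-periodic continuous function is bounded, and its supremum over $\mathbb{R}$ equals its supremum over $[0,\tau]$. Hence
$$\|h\|_{\tau a}=\sup_{\lambda\in[0,\tau]}\|\Phi h(\lambda)\|=\|\Phi h\|_\infty .$$
Homogeneity and the triangle inequality follow from the linearity of $\Phi$. For definiteness, if $\|h\|_{\tau a}=0$ then $\Phi h\equiv 0$ on $[0,\tau]$, hence on $\mathbb{R}$ by periodicity, and therefore $h=a^\uparrow\Phi h\equiv0$. So $\Phi$ is a linear isometric isomorphism onto $(\mathcal{P}_\tau,\|\cdot\|_\infty)$. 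The set $\mathcal{P}_{\tau,a}$ is also clearly a vector space, since the defining relation $h(\lambda+\tau)=ah(\lambda)$ is linear.

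For completeness, take a Cauchy sequence $(h_n)$ in $\mathcal{P}_{\tau,a}$. Then $(\Phi h_n)$ is Cauchy in $BC(\mathbb{R},\mathbb{E})$, which is complete, so it converges uniformly to some $g\in BC(\mathbb{R},\mathbb{E})$. Pointwise limits preserve $g(\lambda+\tau)=g(\lambda)$, so $g\in\mathcal{P}_\tau$. Set $h=a^\uparrow g$. Then $h$ is continuous and lies in $\mathcal{P}_{\tau,a}$ by Lemma \ref{dbhjddkd}, and $\|h_n-h\|_{\tau a}=\|\Phi h_n-g\|_\infty\to0$. Note that $h_n\to h$ need not hold uniformly on $\mathbb{R}$ when $|a|\neq1$, but convergence in $\|\cdot\|_{\tau a}$ is all that is required.

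The only delicate point is the identity $a^\uparrow(\lambda+\tau)=a\,a^\uparrow(\lambda)$ and the multiplicativity $a^\uparrow(-\lambda)a^\uparrow(\lambda)=1$ under the principal logarithm. Both hold because $a^\uparrow(\lambda)=\exp(\frac{\lambda}{\tau}Log(a))$ is an exponential of a linear function of $\lambda$ and $\exp(Log(a))=a$. No branch issue arises.
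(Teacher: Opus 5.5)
Your proof is correct and follows essentially the same route as the paper. The paper cites this lemma from the literature rather than proving it, but its proof of the generalization Lemma \ref{hdkjd} reduces to this one for $\xi(\lambda)=a^\uparrow(-\lambda)$, since $\|\cdot\|_{b\xi}$ then coincides with $\|\cdot\|_{\tau a}$ by periodicity of $a^\uparrow(-\cdot)h(\cdot)$. That proof does what you do: it factors $h_n=a^\uparrow h_n'$ via Lemma \ref{dbhjddkd}, uses completeness of $BC(\mathbb{R},\mathbb{E})$ for the periodic parts, and multiplies the periodic limit back by $a^\uparrow$. Your explicit checks of definiteness and of the identity $a^\uparrow(\lambda+\tau)=a\,a^\uparrow(\lambda)$ are details the paper leaves implicit.
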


The definitions of mild solutions for Eqs. $(\ref{equation2})$ and $(\ref{equation1})$ can be found in references such as \cite{Borai} and \cite{LiL1}.
\begin{Definition}
For a function $h\in C([0,\infty),\mathbb{E})$, its fractional integral of order $\tilde{\iota}\ (0<\tilde{\iota}<1)$ with initial point zero is expressed as
$$\mathcal{I}_\lambda^{\tilde{\iota}} h(\lambda)=\frac{1}{\Gamma(\tilde{\iota})}\int_0^\lambda (\lambda-\theta)^{\tilde{\iota}-1}h(\theta)d\theta,\ \lambda>0,$$
where $\Gamma(\cdot)$ denotes the gamma function.
\end{Definition}

\begin{Definition}
For a function $h\in C^1([0,\infty),\mathbb{E})$, its Caputo fractional derivative of order $\tilde{\iota}\ (0<\tilde{\iota}<1)$ with initial point zero can be formulated as
$$^c\mathcal{D}_\lambda^{\tilde{\iota}} h(\lambda)=\frac{1}{\Gamma(1-\tilde{\iota})}\int_0^\lambda \frac{h^\prime(\theta)}{(\lambda-\theta)^{\tilde{\iota}}}d\theta,\ \lambda>0.$$
\end{Definition}

\begin{Definition}
A function $\nu\in C([0,\infty),\mathbb{E})$ satisfying the integral equation
\begin{equation}\label{dnkfhfkhis1}
\nu(\lambda)=\displaystyle{\Psi_1(\lambda)\nu_0+\int_0^\lambda(\lambda-\theta)^{\tilde{\iota}-1}\Psi_2(\lambda-\theta)\varrho(\theta,\nu(\theta))d\theta,\ \lambda\geq0,}
\end{equation}
is called a mild solution of Eq. $(\ref{equation2})$, where
$$\Psi_1(\lambda)=\int_0^\infty \vartheta_{\tilde{\iota}}(\theta)\Psi(\lambda^{\tilde{\iota}} \theta)d\theta,\ \Psi_2(\lambda)=\tilde{\iota}\int_0^\infty \theta\vartheta_{\tilde{\iota}}(\theta)\Psi(\lambda^{\tilde{\iota}} \theta)d\theta.$$
Here, $\vartheta_{\tilde{\iota}}(\cdot)$ is a probability density function on $(0,\infty)$ defined by
$$\vartheta_{\tilde{\iota}}(\lambda)=\frac{1}{\pi}\displaystyle{\sum_{n=1}^\infty} (-\lambda)^{n-1}\frac{\Gamma(n\tilde{\iota})}{(n-1)!}{\rm{sin}}(n\pi \tilde{\iota})\geq 0,\ \lambda\in (0,\infty).$$
\end{Definition}
\begin{Remark}
If the semigroup $\{\Psi(\lambda)\}_{\lambda\geq 0}$ is uniformly bounded, that is, there exists an $M>0$ such that $\|\Psi(\lambda)\|\leq M,\ \lambda\geq 0$, then
\begin{equation}\label{dnkhfkhis3}
\|\Psi_1(\lambda)\|\leq M ,\ \|\Psi_2(\lambda)\|\leq \frac{M}{\Gamma(\tilde{\iota})},\ \lambda\geq 0.
\end{equation}
Moreover, if $\{\Psi(\lambda)\}_{\lambda\geq 0}$ is a uniformly exponential stable analytic semigroup, i.e., there exist positive constants $M$ and $\alpha$ such that
$$\|\Psi(\lambda)\|\leq M{\rm{e}}^{-\alpha \lambda},\ \lambda\geq 0,$$
then we have
\begin{equation}\label{dnkfhfkhis2}
\lim_{\lambda\rightarrow \infty}\|\Psi_1(\lambda)\|=0,
\end{equation}
and
\begin{eqnarray}
\int_0^\lambda(\lambda-\theta)^{\tilde{\iota}-1}\|\Psi_2(\lambda-\theta)\|d\theta&\leq& \tilde{\iota} M\int_0^\lambda\int_0^\infty \theta_1\vartheta_{\tilde{\iota}}(\theta_1)(\lambda-\theta_2)^{\tilde{\iota}-1}{\rm{e}}^{-\alpha(\lambda-\theta_2)^{\tilde{\iota}} \theta_1}d\theta_1 d\theta_2\nonumber\\
&\leq&M\int_0^\infty \vartheta_{\tilde{\iota}}(\lambda)d\lambda \int_0^\infty {\rm{e}}^{-\alpha \lambda}d\lambda=\frac{M}{\alpha}.\label{hejek}
\end{eqnarray}
In this case, we say the semigroup $\{\Psi(\lambda)\}_{t\geq 0}$ is of type $\mathbf{T}(M,\alpha)$.
\end{Remark}
\begin{Definition}
A function $\nu\in C([-\kappa,\infty),\mathbb{E})$ that fulfills the equation
\begin{equation}\label{equation12}
\nu(\lambda)=
\left\{
\begin{array}{lr}
\eta(\lambda),\ \qquad \qquad \qquad \qquad \qquad \qquad \qquad \qquad  \ \ \ \ \lambda\in [-\kappa,0],\\
\displaystyle{\Psi_1(\lambda)\eta(0)+\int_0^\lambda(\lambda-\theta)^{\tilde{\iota}-1}\Psi_2(\lambda-\theta)\varrho(\theta,\nu_\theta)d\theta,\ \lambda>0,}
\end{array}
\right.
\end{equation}
is referred to as a mild solution of Eq. $(\ref{equation1})$.
\end{Definition}

\section{Weighted pseudo $\emph{S}$-asymptotically $(\xi,\tau,a)$-periodic functions and their properties}

\subsection{$\emph{S}$-asymptotically $(\xi,\tau,a)$-periodic functions}

Throughout the paper, we assume the function $\xi(\lambda)\in C(\mathbb{R},\mathbb{C})$ with $\xi(\lambda)\neq 0$.
\begin{Lemma}\label{hdkjd}
Assume the function $\xi(\lambda)$ satisfies the condition $\displaystyle{\sup_{\lambda\in \mathbb{R}}} \left|\xi(\lambda)a^\uparrow(\lambda)\right|<\infty$.
Then, the linear space $\mathcal{P}_{\tau,a}(\mathbb{R},\mathbb{E})$ equipped with the norm
$$\|h\|_{b\xi}=\displaystyle{\sup_{\lambda\in \mathbb{R}}}\|\xi(\lambda)h(\lambda)\|$$
forms a Banach space.
\end{Lemma}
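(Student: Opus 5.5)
The plan is to reduce everything to the sup-norm structure of bounded continuous functions, using the factorization in Lemma \ref{dbhjddkd} and the hypothesis $K:=\sup_{\lambda\in\mathbb{R}}|\xi(\lambda)a^\uparrow(\lambda)|<\infty$.

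\emph{Step 1: the norm is finite.} Take $h\in\mathcal{P}_{\tau,a}(\mathbb{R},\mathbb{E})$. By Lemma \ref{dbhjddkd}, $h(\lambda)=a^\uparrow(\lambda)h_1(\lambda)$ with $h_1\in\mathcal{P}_\tau(\mathbb{R},\mathbb{E})$. Since $h_1$ is continuous and $\tau$-periodic, it is bounded by $\sup_{[0,\tau]}\|h_1\|$. Hence
$$\|\xi(\lambda)h(\lambda)\|\le K\sup_{s\in[0,\tau]}\|h_1(s)\|<\infty .$$
Note that $\sup_{[0,\tau]}\|h_1\|$ equals the norm $\|h\|_{\tau a}$ of Lemma \ref{dnfjk}.

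\emph{Step 2: norm axioms.} Homogeneity and the triangle inequality are immediate. Definiteness follows because $\xi(\lambda)\neq0$ for every $\lambda$: if $\|h\|_{b\xi}=0$, then $h\equiv0$.

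\emph{Step 3: completeness.} Let $(h_n)$ be Cauchy in $\|\cdot\|_{b\xi}$. Then $g_n:=\xi h_n$ is Cauchy in $BC(\mathbb{R},\mathbb{E})$, since each $g_n$ is continuous because $\xi$ and $h_n$ are. So $g_n\to g$ uniformly, with $g$ continuous and bounded. Define $h:=g/\xi$, which is continuous because $\xi$ is continuous and nonvanishing. Then $\|h_n-h\|_{b\xi}=\|g_n-g\|_\infty\to0$.

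It remains to check that $h\in\mathcal{P}_{\tau,a}$. For each fixed $\lambda$, $h_n(\lambda)=g_n(\lambda)/\xi(\lambda)\to h(\lambda)$ pointwise. Passing to the limit in $h_n(\lambda+\tau)=ah_n(\lambda)$ gives $h(\lambda+\tau)=ah(\lambda)$. Pointwise convergence is enough here because $\xi(\lambda)$ and $\xi(\lambda+\tau)$ are fixed nonzero numbers. Thus $h$ is $(\tau,a)$-periodic and the space is complete.

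\emph{Main obstacle.} The delicate point is that $1/\xi$ need not be bounded, so $\|\cdot\|_{b\xi}$ may not be equivalent to $\|\cdot\|_{\tau a}$. For this reason I will not try to transfer completeness from Lemma \ref{dnfjk}. The route above avoids this: it uses only pointwise invertibility of $\xi$ for the limit's periodicity, and the hypothesis on $\xi a^\uparrow$ only to ensure the norm is finite on the whole space. Completeness is taken from $BC(\mathbb{R},\mathbb{E})$, and the $(\tau,a)$-periodicity is closed under pointwise limits.
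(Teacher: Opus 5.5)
Your proof is correct, but it moves the problem into $BC(\mathbb{R},\mathbb{E})$ through a different multiplier than the paper does.

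The paper factors $h_n=a^\uparrow h_n'$ with $h_n'\in\mathcal{P}_\tau(\mathbb{R},\mathbb{E})$ and declares $(h_n')$ Cauchy in $BC(\mathbb{R},\mathbb{E})$. It takes the uniform limit $h'$, which is still $\tau$-periodic, and sets $h=a^\uparrow h'$, so that $\|h_n-h\|_{b\xi}\le K\|h_n'-h'\|_\infty\to0$. You instead use the isometry $h\mapsto\xi h$ into $BC(\mathbb{R},\mathbb{E})$. You show its image is closed because $h(\lambda+\tau)=ah(\lambda)$ passes to pointwise limits. This uses only continuity and nonvanishing of $\xi$, and leaves nothing implicit.

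The step the paper calls evident needs a lower bound that your route avoids. A $\tau$-periodic function has the same supremum over $\mathbb{R}$ as over $[0,\tau]$. On $[0,\tau]$ the continuous positive function $|\xi a^\uparrow|$ has a minimum $c>0$. Hence $c\|h\|_{\tau a}\le\|h\|_{b\xi}\le K\|h\|_{\tau a}$ on $\mathcal{P}_{\tau,a}(\mathbb{R},\mathbb{E})$.

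This also shows that the concern in your closing paragraph is unfounded. Under the hypothesis the two norms are always equivalent, whether or not $1/\xi$ is bounded. For example, with $\xi=a^\uparrow(-\cdot)$ and $|a|>1$, $1/\xi$ is unbounded, yet the two norms coincide exactly. So the lemma is really a corollary of Lemma \ref{dnfjk}.

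Your argument never relies on that remark, so nothing breaks. The paper's route, once the lower bound is supplied, gives the stronger fact that $\|\cdot\|_{b\xi}$ induces the same topology as $\|\cdot\|_{\tau a}$. Yours is quicker to verify and self-contained.
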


\begin{proof}
By invoking Lemma \ref{dbhjddkd} and the stated assumptions, it is straightforward to confirm that $\|\cdot\|_{b\xi}$ defines a norm
on the linear space $\mathcal{P}_{\tau,a}(\mathbb{R},\mathbb{E})$. To establish completeness, we select an arbitrary Cauchy sequence
 $\{h_n\}_{n=1}^\infty$ in $\mathcal{P}_{\tau,a}(\mathbb{R},\mathbb{E})$, where each element satisfies $$h_n(\lambda) = a^\uparrow(\lambda)h_n'(\lambda)$$
 with $h_n'(\lambda) \in \mathcal{P}_\tau(\mathbb{R},\mathbb{E})$ for $n \in \mathbb{N}_{+}$.

It is evident that the sequence $\{h_n'(\lambda)\}_{n=1}^\infty$ is a Cauchy sequence in the space $BC(\mathbb{R},\mathbb{E})$.
Consequently, there exists a function $h'(\lambda) \in \mathcal{P}_\tau(\mathbb{R},\mathbb{E})$ such that $\|h_n' - h'\|_\infty \to 0$ as $n \to \infty$.
Define $h(\lambda) := a^\uparrow(\lambda)h'(\lambda)$. It follows that $h \in \mathcal{P}_{\tau,a}(\mathbb{R},\mathbb{E})$ and $$\|h_n - h\|_{b\xi} \to 0 \quad \mbox{as} \quad n \to \infty,$$
thereby concluding the proof of this lemma.
\end{proof}

In the particular case where $\xi(\lambda) = a^\uparrow(-\lambda)$, Lemma \ref{hdkjd} reduces to Lemma \ref{dnfjk}. It is evident that the function $\xi(\lambda) = (a^\uparrow(\lambda) + \lambda^{2})^{-1}$ also fulfills the conditions of Lemma \ref{hdkjd}.

Based on Lemma \ref{hdkjd}, we introduce the following function space for any given function $\xi(\lambda)$:
$$
BC_\xi(\mathbb{R}, \mathbb{E}) = \left\{ h \in C(\mathbb{R}, \mathbb{E}) : \sup_{\lambda \in \mathbb{R}} \| \xi(\lambda) h(\lambda) \| < \infty \right\}.
$$

\begin{Lemma}\label{jjdjdj}
Let $h \in C(\mathbb{R}, \mathbb{E})$. Then $h \in BC_\xi(\mathbb{R}, \mathbb{E})$ if and only if there exists $h_1 \in BC(\mathbb{R}, \mathbb{E})$ such that $h(\lambda) = \frac{1}{\xi(\lambda)} h_1(\lambda)$ for all $\lambda \in \mathbb{R}$.
\end{Lemma}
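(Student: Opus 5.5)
The plan is to prove the two implications separately, using the standing assumption of this section that $\xi\in C(\mathbb{R},\mathbb{C})$ with $\xi(\lambda)\neq 0$ for every $\lambda$. This is what makes $1/\xi$ a well-defined continuous function on $\mathbb{R}$. The same argument as Lemma \ref{dbhjddkd} applies: we factor out a fixed nonvanishing scalar function and check that continuity and boundedness transfer.

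\emph{Necessity.} Let $h\in BC_\xi(\mathbb{R},\mathbb{E})$ and define $h_1(\lambda):=\xi(\lambda)h(\lambda)$.
\begin{itemize}
\item $h_1$ is continuous, since $\lambda\mapsto \xi(\lambda)$ is a continuous scalar function, $h$ is continuous, and scalar multiplication $\mathbb{C}\times\mathbb{E}\to\mathbb{E}$ is jointly continuous.
\item $h_1$ is bounded, because $\sup_{\lambda\in\mathbb{R}}\|h_1(\lambda)\|=\sup_{\lambda\in\mathbb{R}}\|\xi(\lambda)h(\lambda)\|<\infty$ by the definition of $BC_\xi(\mathbb{R},\mathbb{E})$.
\end{itemize}
Hence $h_1\in BC(\mathbb{R},\mathbb{E})$. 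Since $\xi(\lambda)\neq0$, we can divide and obtain $h(\lambda)=\frac{1}{\xi(\lambda)}h_1(\lambda)$ for all $\lambda\in\mathbb{R}$.

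\emph{Sufficiency.} Suppose $h(\lambda)=\frac{1}{\xi(\lambda)}h_1(\lambda)$ with $h_1\in BC(\mathbb{R},\mathbb{E})$.
\begin{itemize}
\item $h$ is continuous, as the product of the continuous scalar function $1/\xi$ with the continuous function $h_1$. In the statement $h\in C(\mathbb{R},\mathbb{E})$ is in any case given.
\item The weighted bound holds, because $\|\xi(\lambda)h(\lambda)\|=\|h_1(\lambda)\|\le\|h_1\|_\infty$ for all $\lambda$. Taking the supremum gives $\sup_{\lambda\in\mathbb{R}}\|\xi(\lambda)h(\lambda)\|<\infty$.
\end{itemize}
Hence $h\in BC_\xi(\mathbb{R},\mathbb{E})$.

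There is no real obstacle here. The only point needing care is the nonvanishing of $\xi$, which guarantees that $1/\xi$ exists and is continuous; this is exactly the standing hypothesis. I will also remark that the correspondence $h\mapsto \xi h$ is an isometry from $(BC_\xi(\mathbb{R},\mathbb{E}),\|\cdot\|_{b\xi})$ onto $(BC(\mathbb{R},\mathbb{E}),\|\cdot\|_\infty)$. That observation is likely to be used later for completeness arguments.
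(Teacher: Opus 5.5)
Your proposal is correct and follows the paper's proof essentially verbatim: you set $h_1=\xi h$ for necessity, and for sufficiency you use $\|\xi(\lambda)h(\lambda)\|=\|h_1(\lambda)\|$ together with $\xi\neq 0$. Your extra check that $h_1$ is continuous (via continuity of $\xi$) and your isometry remark are welcome additions; the isometry is exactly what the paper uses in the next lemma to prove completeness of $BC_\xi(\mathbb{R},\mathbb{E})$.
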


\begin{proof}
(Necessity) Suppose $h \in BC_\xi(\mathbb{R}, \mathbb{E})$. Define $h_1(\lambda) = \xi(\lambda) h(\lambda)$. By definition of $BC_\xi(\mathbb{R}, \mathbb{E})$, $h_1 \in BC(\mathbb{R}, \mathbb{E})$, and clearly $h(\lambda) = \frac{1}{\xi(\lambda)} h_1(\lambda)$.

(Sufficiency) Assume $h(\lambda) = \frac{1}{\xi(\lambda)} h_1(\lambda)$ with $h_1 \in BC(\mathbb{R}, \mathbb{E})$. Since $\xi(\lambda) \neq 0$ for all $\lambda$, $h \in C(\mathbb{R}, \mathbb{E})$. Moreover,
$$
\sup_{\lambda \in \mathbb{R}} \| \xi(\lambda) h(\lambda) \| = \sup_{\lambda \in \mathbb{R}} \| h_1(\lambda) \| = \| h_1 \|_\infty < \infty,
$$
which implies $h \in BC_\xi(\mathbb{R}, \mathbb{E})$.
\end{proof}

\begin{Lemma}
The space $BC_\xi(\mathbb{R}, \mathbb{E})$ is a Banach space when equipped with the norm
$$
\| h \|_{b\xi} = \sup_{\lambda \in \mathbb{R}} \| \xi(\lambda) h(\lambda) \|.
$$
\end{Lemma}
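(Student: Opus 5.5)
The plan is to transport the problem to $BC(\mathbb{R},\mathbb{E})$ through the multiplication map suggested by Lemma \ref{jjdjdj}. Define $T: BC_\xi(\mathbb{R},\mathbb{E}) \to BC(\mathbb{R},\mathbb{E})$ by $(Th)(\lambda) = \xi(\lambda)h(\lambda)$. Lemma \ref{jjdjdj} shows that $T$ is well defined and surjective. Since $\xi(\lambda)\neq 0$ for every $\lambda$, $T$ is also injective, with inverse $(T^{-1}h_1)(\lambda) = \xi(\lambda)^{-1}h_1(\lambda)$. By construction $\|Th\|_\infty = \|h\|_{b\xi}$, so $T$ is a linear isometric bijection onto the Banach space $BC(\mathbb{R},\mathbb{E})$ with the supremum norm.

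First I will check that $\|\cdot\|_{b\xi}$ is a norm on $BC_\xi(\mathbb{R},\mathbb{E})$. Finiteness holds by definition of the space. Homogeneity and the triangle inequality are inherited pointwise from $\|\cdot\|$. Definiteness uses $\xi\neq 0$: if $\sup_\lambda \|\xi(\lambda)h(\lambda)\| = 0$, then $\xi(\lambda)h(\lambda) = 0$ for every $\lambda$, and hence $h\equiv 0$.

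For completeness, take a Cauchy sequence $\{h_n\}$ in $BC_\xi(\mathbb{R},\mathbb{E})$. Then $g_n := \xi h_n$ is Cauchy in $BC(\mathbb{R},\mathbb{E})$. Because uniform limits of bounded continuous functions are bounded and continuous, there is $g\in BC(\mathbb{R},\mathbb{E})$ with $\|g_n - g\|_\infty\to 0$. Set $h := g/\xi$. This function is continuous, since $\xi$ is continuous and nonvanishing, so $1/\xi$ is continuous. Moreover $\sup_\lambda\|\xi(\lambda)h(\lambda)\| = \|g\|_\infty<\infty$, so $h\in BC_\xi(\mathbb{R},\mathbb{E})$ by Lemma \ref{jjdjdj}. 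Finally $\|h_n-h\|_{b\xi} = \|g_n-g\|_\infty\to0$, which gives convergence in the new norm.

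I expect no real obstacle. The only delicate point is continuity of the limit $h$. Continuity of $1/\xi$ is exactly where the standing hypotheses $\xi\in C(\mathbb{R},\mathbb{C})$ and $\xi(\lambda)\neq 0$ enter. No bound on $1/\xi$ is needed, because the limit is constructed pointwise and only $\xi h$ has to be bounded.
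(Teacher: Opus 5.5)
Your proposal is correct and follows the paper's proof. Both arguments write $h_n=\xi^{-1}h_{1n}$ with $\{h_{1n}\}$ Cauchy in $BC(\mathbb{R},\mathbb{E})$, pass to the uniform limit $h_1$, and set $h=\xi^{-1}h_1$. Your explicit checks of the norm axioms and of the continuity of $1/\xi$ are details the paper leaves implicit.
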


\begin{proof}
For any given Cauchy sequence $\{h_n\}_{n=1}^\infty \subset BC_{\xi}(\mathbb{R},\mathbb{E})$, it suffices to demonstrate the existence of an element
 $h \in BC_{\xi}(\mathbb{R},\mathbb{E})$ satisfying $$\displaystyle{\lim_{n\rightarrow \infty}}\|h_n-h\|_{b\xi}=0.$$ According to Lemma \ref{jjdjdj},
 each element can be expressed as $$h_n(\lambda)=\frac{1}{\xi(\lambda)}h_{1n}(\lambda)$$ with $h_{1n}(\lambda) \in BC(\mathbb{R},\mathbb{E})$. Consequently, the sequence $\{h_{1n}\}_{n=1}^\infty$
 forms a Cauchy sequence within $BC(\mathbb{R},\mathbb{E})$. Therefore, there exists a function $h_1(\lambda) \in BC(\mathbb{R},\mathbb{E})$
 such that $\displaystyle{\lim_{n\rightarrow \infty}}\|h_{1n}-h_1\|_{\infty}=0$. Defining $h(\lambda):=\frac{1}{g(\lambda)}h_1(\lambda)$,
 we obtain $h(t) \in BC_{\xi}(\mathbb{R},\mathbb{E})$ and $\displaystyle{\lim_{n\rightarrow \infty}}\|h_n-h\|_{bg}=0$. This concludes the proof of the lemma.
\end{proof}
Next, we will construct (weighted pseudo) $\emph{S}$-asymptotically $(\xi,\tau,a)$-periodic functions in the Banach space $BC_{\xi}(\mathbb{R},\mathbb{E})$. The conditions on the function $\xi(\cdot)$ are quite weak, and no relation between $\xi$ and $\tau,\ a$ is required.

In the special case where $\xi(\lambda)=a^\uparrow(-\lambda)$, the normed space $(BC_{\xi},\|\cdot\|_{b\xi})$ is denoted by $(BC_{\tau,a},\|\cdot\|_{b\tau a})$.

\begin{Definition}
Let $h$ be a function such that both $h(\cdot)$ and $h(\cdot+\tau)$ belong to $BC_{\xi}(\mathbb{R},\mathbb{E})$. Such a function is termed \emph{S}-asymptotically $(\xi,\tau,a)$-periodic provided
$$
\lim_{\lambda\rightarrow \pm\infty}\left\|\xi(\lambda)[h(\lambda+\tau)-ah(\lambda)]\right\|=0.
$$
\end{Definition}

We denote the set of such functions by ${\rm{SAP}}_{\xi,\tau,a}(\mathbb{R},\mathbb{E})$. In the case where $\xi(t)=a^\uparrow(-\lambda)$, the notation ${\rm{SAP}}_{\xi,\tau,a}(\mathbb{R},\mathbb{E})$ is simplified to ${\rm{SAP}}_{\tau,a}(\mathbb{R},\mathbb{E})$.

For  $\xi_1(\lambda)=a^\uparrow(-\lambda)$ and $\xi_2(\lambda)=(a^\uparrow(\lambda)+\lambda^{2})^{-1}$, we have $${\rm{SAP}}_{\xi_1,\tau,a}(\mathbb{R},\mathbb{E})\neq {\rm{SAP}}_{\xi_2,\tau,a}(\mathbb{R},\mathbb{E}).$$ More precisely, as $\lambda\rightarrow +\infty$ or $\lambda\rightarrow -\infty$, the functions $\xi_1(\lambda)$ and $\xi_2(\lambda)$ exhibit equivalence in one limit direction but not in the other.

\begin{Lemma}
Let $h_i\ (i=1,2)$ and $h$ belong to ${\rm{SAP}}_{\xi,\tau,a}(\mathbb{R},\mathbb{E})$. The listed properties below are then satisfied:\\
(1) $h_1+h_2\in {\rm{SAP}}_{\xi,\tau,a}(\mathbb{R},\mathbb{E})$, and, for every $c\in \mathbb{C}$, $ch\in {\rm{SAP}}_{\xi,\tau,a}(\mathbb{R},\mathbb{E})$ .\\
(2) Suppose for each $r\in \mathbb{R}$, $|\xi_r|\sim |\xi|$. Then, for any $r\in \mathbb{R}$, $h_r(\lambda)\in {\rm{SAP}}_{\xi,\tau,a}(\mathbb{R},\mathbb{E})$.
\end{Lemma}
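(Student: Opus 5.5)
Part (1) follows directly from the definition together with the fact that $BC_{\xi}(\mathbb{R},\mathbb{E})$ is a linear space. If $h_1,h_2\in {\rm{SAP}}_{\xi,\tau,a}(\mathbb{R},\mathbb{E})$, then $h_1+h_2$ and $(h_1+h_2)(\cdot+\tau)=h_1(\cdot+\tau)+h_2(\cdot+\tau)$ lie in $BC_\xi(\mathbb{R},\mathbb{E})$. By the triangle inequality,
$$\|\xi(\lambda)[(h_1+h_2)(\lambda+\tau)-a(h_1+h_2)(\lambda)]\|\le \|\xi(\lambda)[h_1(\lambda+\tau)-ah_1(\lambda)]\|+\|\xi(\lambda)[h_2(\lambda+\tau)-ah_2(\lambda)]\|,$$
and both terms tend to $0$ as $\lambda\to\pm\infty$. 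For $ch$, the analogous quantity is $|c|$ times the one for $h$, so it also tends to zero, and $ch,\ ch(\cdot+\tau)\in BC_\xi$ trivially.

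For (2), fix $r\in\mathbb{R}$. I read the hypothesis $|\xi_r|\sim|\xi|$ as saying that $|\xi_r|/|\xi|\in\mathcal{W}_b$. This gives constants $0<m\le K<\infty$ with
$$m|\xi(\lambda)|\le|\xi(\lambda+r)|\le K|\xi(\lambda)|\quad\text{for all }\lambda.$$
We have to show three things: $h_r\in BC_\xi$, $h_r(\cdot+\tau)\in BC_\xi$, and the asymptotic condition.

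For the first, write $\|\xi(\lambda)h(\lambda+r)\|=\frac{|\xi(\lambda)|}{|\xi(\lambda+r)|}\|\xi(\lambda+r)h(\lambda+r)\|\le m^{-1}\|h\|_{b\xi}$, which is finite. The same computation applied to $h(\cdot+\tau)$, which lies in $BC_\xi$, gives $h_r(\cdot+\tau)=h(\cdot+\tau+r)\in BC_\xi$. For the limit,
$$\|\xi(\lambda)[h_r(\lambda+\tau)-ah_r(\lambda)]\|\le m^{-1}\|\xi(\lambda+r)[h(\lambda+r+\tau)-ah(\lambda+r)]\|.$$
The right-hand side tends to $0$ as $\lambda\to\pm\infty$, because $\lambda+r\to\pm\infty$ and $h\in{\rm SAP}_{\xi,\tau,a}$.

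The only delicate point is interpreting the equivalence $\sim$ correctly. What the argument actually needs is the lower bound $\inf|\xi_r/\xi|>0$, equivalently a bound on $|\xi/\xi_r|$, and this is exactly the positive-infimum requirement in the definition of $\mathcal{W}_b$. One also has to check that $\sim$ is applied to $|\xi_r|$ and $|\xi|$, which are continuous, nonnegative and nonvanishing, hence in $\mathcal{W}$, so the definition of $\sim$ applies. Beyond that, everything is routine.
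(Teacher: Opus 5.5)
Your proof is correct and matches the paper's argument. Part (1) follows from linearity and the triangle inequality. For part (2), the paper factors out $|\xi(\lambda)/\xi(\lambda+r)|$, bounds it using $|\xi_r|\sim|\xi|$, and uses $\lambda+r\to\pm\infty$, exactly as you do; your explicit check that $h_r$ and $h_r(\cdot+\tau)$ lie in $BC_\xi(\mathbb{R},\mathbb{E})$ fills in a step the paper leaves implicit.
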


\begin{proof}
(1) is evident from the definition.\\
(2) Observe that
\begin{eqnarray*}
&&\displaystyle{\lim_{\lambda\rightarrow \pm\infty}}\left\|\xi(\lambda)[h(\lambda+r+\tau)-ah(\lambda+r)]\right\|\\
&=&\displaystyle{\lim_{\lambda\rightarrow \pm\infty}}\left\|\xi(\lambda+r)[h(\lambda+r+\tau)-ah(\lambda+r)]\right\|\cdot\left|\frac{\xi(\lambda)}{\xi(\lambda+r)}\right|.
\end{eqnarray*}
By applying the given conditions, the conclusion follows.
\end{proof}

\begin{Lemma}
Assume for every $ r \in \mathbb{R} $, the function $\xi$ satisfies $ |\xi_r| \sim |\xi| $.
Then the linear space $ \left( \mathrm{SAP}_{\xi,\tau,a}(\mathbb{R}, \mathbb{E}), \, \|\cdot\|_{b\xi} \right) $ is a Banach space.
\end{Lemma}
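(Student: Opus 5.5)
The plan is to show that $\mathrm{SAP}_{\xi,\tau,a}(\mathbb{R},\mathbb{E})$ is a closed linear subspace of the Banach space $\left(BC_\xi(\mathbb{R},\mathbb{E}),\|\cdot\|_{b\xi}\right)$ established in the preceding lemma. Linearity is part (1) of the previous lemma, and the inclusion $\mathrm{SAP}_{\xi,\tau,a}\subseteq BC_\xi$ holds by definition. So it remains to take a sequence $\{h_n\}\subset \mathrm{SAP}_{\xi,\tau,a}(\mathbb{R},\mathbb{E})$ and $h\in BC_\xi(\mathbb{R},\mathbb{E})$ with $\|h_n-h\|_{b\xi}\to 0$, and to prove that $h\in \mathrm{SAP}_{\xi,\tau,a}(\mathbb{R},\mathbb{E})$. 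Any Cauchy sequence in $\mathrm{SAP}_{\xi,\tau,a}$ then converges in $BC_\xi$ to such an $h$, which gives completeness.

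The hypothesis $|\xi_r|\sim|\xi|$ with $r=\tau$ supplies constants $0<m\le K<\infty$ with
\[
m\le \left|\frac{\xi(\lambda+\tau)}{\xi(\lambda)}\right|\le K,\qquad \lambda\in\mathbb{R}.
\]
\textbf{Step 1: the translate stays in $BC_\xi$.} We need $h(\cdot+\tau)\in BC_\xi$. This follows from
\[
\|\xi(\lambda)h(\lambda+\tau)\|=\left|\frac{\xi(\lambda)}{\xi(\lambda+\tau)}\right|\,\|\xi(\lambda+\tau)h(\lambda+\tau)\|\le m^{-1}\|h\|_{b\xi}.
\]
The same estimate applied to $h_n-h$ gives $\sup_\lambda\|\xi(\lambda)[h_n(\lambda+\tau)-h(\lambda+\tau)]\|\le m^{-1}\|h_n-h\|_{b\xi}\to 0$. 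So translation by $\tau$ is controlled uniformly in the $\xi$-weighted norm.

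\textbf{Step 2: the $\varepsilon/3$ argument.} For every $\lambda$ and $n$ we have
\[
\|\xi(\lambda)[h(\lambda+\tau)-ah(\lambda)]\|\le m^{-1}\|h-h_n\|_{b\xi}+\|\xi(\lambda)[h_n(\lambda+\tau)-ah_n(\lambda)]\|+|a|\,\|h_n-h\|_{b\xi}.
\]
Given $\varepsilon>0$, fix $n$ so that $(m^{-1}+|a|)\|h_n-h\|_{b\xi}<\varepsilon/2$. Since $h_n\in\mathrm{SAP}_{\xi,\tau,a}$, we can choose $L>0$ such that the middle term is below $\varepsilon/2$ for $|\lambda|>L$. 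Hence $\lim_{\lambda\to\pm\infty}\|\xi(\lambda)[h(\lambda+\tau)-ah(\lambda)]\|=0$, and $h\in\mathrm{SAP}_{\xi,\tau,a}(\mathbb{R},\mathbb{E})$.

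The only genuine obstacle is Step 1. Convergence in $\|\cdot\|_{b\xi}$ does not by itself control the shifted functions $h_n(\cdot+\tau)$ with weight $\xi(\lambda)$ rather than $\xi(\lambda+\tau)$. This is exactly where the equivalence $|\xi_\tau|\sim|\xi|$ is used, and only the lower bound $m=\inf_\lambda|\xi(\lambda+\tau)/\xi(\lambda)|>0$ is actually needed. Everything else is routine.
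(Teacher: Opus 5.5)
Your proposal is correct and follows essentially the same route as the paper: you view $\mathrm{SAP}_{\xi,\tau,a}(\mathbb{R},\mathbb{E})$ inside the Banach space $BC_\xi(\mathbb{R},\mathbb{E})$, use $|\xi_\tau|\sim|\xi|$ to bound $|\xi(\lambda)/\xi(\lambda+\tau)|$ (your $m^{-1}$ is the paper's $C$), and conclude with the same three-term triangle-inequality estimate. Your Step 1 also explicitly checks that the limit satisfies $h(\cdot+\tau)\in BC_\xi(\mathbb{R},\mathbb{E})$, a requirement of the definition that the paper's proof leaves implicit.
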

\begin{proof}
It suffices to prove the completeness of the linear space ${\rm{SAP}}_{\xi,\tau,a}(\mathbb{R},\mathbb{E})$ under the norm $\|\cdot\|_{b\xi}$. To this end,  let $ \{h_n\}_{n=1}^\infty$  be a Cauchy sequence in $ \mathrm{SAP}_{\xi,\tau,a}(\mathbb{R}, \mathbb{E}) $ with respect to the norm $ \|\cdot\|_{b\xi} $. Since $ \{h_n\}_{n=1}^\infty$ is also a Cauchy sequence in $BC_{\xi}(\mathbb{R},\mathbb{E})$, the sequence $ \{h_n\} $ converges uniformly in $ BC_\xi(\mathbb{R}, \mathbb{E}) $ to some $ h \in BC_\xi(\mathbb{R}, \mathbb{E}) $.  $|\xi_\tau|\sim |\xi|$, so there exists $C>0$ such that $$\left|\frac{\xi(\lambda)}{\xi(\lambda+\tau)}\right|\leq C,\qquad \lambda\in \mathbb{R}.$$
Let $ \mu > 0 $ be arbitrary. By the Cauchy property, there exists $ N \in \mathbb{N}_{+} $ such that for all $ n \geq N$,
$$
\|h_n - h\|_{b\xi} < \frac{\mu}{3} \min\left\{ \frac{1}{C}, \frac{1}{|a|} \right\}.
$$
By the definition of $ \mathrm{SAP}_{\xi,\tau,a}(\mathbb{R},\mathbb{E}) $, there exists $ T_1 > 0 $ such that for all $ |\lambda| > T_1 $,
$$
\|\xi(\lambda)[h_N(\lambda + \tau) - a \, h_N(\lambda)]\| < \frac{\mu}{3}.
$$
Now, for $ |\lambda| > T_1 $, we estimate:
\begin{eqnarray*}
\|\xi(\lambda)[h(\lambda+\tau)-ah(\lambda)]\|&\leq&\|\xi(\lambda)[h(\lambda+\tau)-h_N(\lambda+\tau)]\|+\|\xi(\lambda)[h_N(\lambda+\tau)-ah_N(\lambda)]\|\\
&&+\|\xi(\lambda)[ah_N(\lambda)-ah(\lambda)]\|\\
&=&\left|\frac{\xi(\lambda)}{\xi(\lambda+\tau)}\right|\cdot\|\xi(\lambda+\tau)[h(\lambda+\tau)-h_N(\lambda+\tau)]\|\\
&&+\|\xi(\lambda)[h_N(\lambda+\tau)-ah_N(\lambda)]\|+|a|\cdot\|\xi(\lambda)[h_N(\lambda)-h(\lambda)]\|\\
&\leq&C\cdot\frac{\mu}{3C}+\frac{\mu}{3}+|a|\cdot\frac{\mu}{3|a|}=\mu.
\end{eqnarray*}
Therefore, $ h \in \mathrm{SAP}_{\xi,\tau,a}(\mathbb{R}, \mathbb{E}) $, which proves the completeness of the space. We have thus finished the proof of this lemma.
\end{proof}

The functions $\xi(\lambda)=a^\uparrow(-\lambda)$ and $(a^\uparrow(\lambda)+\lambda^2)^{-1}$ both satisfy the condition $ |\xi_r| \sim |\xi| $ for all $ r \in \mathbb{R} $.

\subsection{Weighted pseudo $\emph{S}$-asymptotically $(\xi,\tau,a)$-periodic functions}

We introduce the following definition of weighted pseudo $\emph{S}$-asymptotically $(\xi,\tau,a)$-periodic functions.
\begin{Definition}\label{hddj1}
 For a function $h$ such that $h(\cdot),\ h(\cdot+\tau)\in BC_{\xi}(\mathbb{R},\mathbb{E})$, it is termed weighted pseudo $\emph{S}$-asymptotically $(\xi,\tau,a)$-periodic with respect to the weight function $w$, or $w$-pseudo $\emph{S}$-asymptotically $(\xi,\tau,a)$-periodic, provided the condition below is fulfilled
$$\lim_{\ell\rightarrow \infty}\frac{1}{\sigma(\ell,w)}\int_{-\ell}^\ell \left\|\xi(\lambda)[h(\lambda+\tau)-ah(\lambda)]\right\|w(\lambda)d\lambda=0,$$
where $\sigma(\ell,w)=\displaystyle{\int_{-\ell}^\ell w(\theta)d\theta}.$
\end{Definition}
${\rm{PSAP}}_{\xi,\tau,a}(\mathbb{R},\mathbb{E},w)$ denotes the set of all $w$-pseudo $\emph{S}$-asymptotically $(\xi,\tau,a)$-periodic functions.  Several special cases of the function set are distinguished by the following simplified notations: When $\xi(\lambda)=a^\uparrow(-\lambda)$, the set is denoted by ${\rm{PSAP}}_{\tau,a}(\mathbb{R},\mathbb{E},w)$. When $w(\lambda)\equiv 1$, the  set is denoted by ${\rm{PSAP}}_{\xi,\tau,a}(\mathbb{R},\mathbb{E})$. When $\xi(\lambda)=a^\uparrow(-\lambda)$, $w(\lambda)\equiv 1$, the set is simply written as ${\rm{PSAP}}_{\tau,a}(\mathbb{R},\mathbb{E})$.

Clearly, when the weight function $\xi(\lambda)=w(\lambda)\equiv1$ and $a=1$, we can obtain the space of pseudo $\emph{S}$-asymptotically periodic functions in \cite{Pierri}.

We have the following properties for the set ${\rm{PSAP}}_{\xi,\tau,a}(\mathbb{R},\mathbb{E},w)$.
\begin{Theorem}\label{dbdhb}
For any $w_i\in \mathcal{W}\ (i=1,2)$, if $w_1\sim w_2$, then the weighted function spaces coincide: $${\rm{PSAP}}_{\xi,\tau,a}(\mathbb{R},\mathbb{E},w_1)={\rm{PSAP}}_{\xi,\tau,a}(\mathbb{R},\mathbb{E},w_2).$$
\end{Theorem}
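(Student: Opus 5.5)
The plan is to prove one inclusion and get the other by symmetry, since $\sim$ is symmetric. Indeed, if $w_1/w_2\in\mathcal{W}_b$, then $w_2/w_1$ is also bounded: $w_1/w_2$ has a positive infimum. Likewise $w_2/w_1$ has a positive infimum because $w_1/w_2$ is bounded. So it suffices to show ${\rm{PSAP}}_{\xi,\tau,a}(\mathbb{R},\mathbb{E},w_2)\subseteq {\rm{PSAP}}_{\xi,\tau,a}(\mathbb{R},\mathbb{E},w_1)$.

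Since $w_1/w_2\in\mathcal{W}_b$, we fix constants $0<m\le K<\infty$ with
\[
m\,w_2(\lambda)\le w_1(\lambda)\le K\,w_2(\lambda)\quad\text{for a.e. }\lambda\in\mathbb{R}.
\]
The lower bound is needed because of the denominator. Integrating over $[-\ell,\ell]$ gives $\sigma(\ell,w_1)\ge m\,\sigma(\ell,w_2)$.

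Now let $h\in {\rm{PSAP}}_{\xi,\tau,a}(\mathbb{R},\mathbb{E},w_2)$. The membership requirement $h(\cdot),h(\cdot+\tau)\in BC_\xi(\mathbb{R},\mathbb{E})$ does not involve the weight. Set $\Phi(\lambda)=\|\xi(\lambda)[h(\lambda+\tau)-ah(\lambda)]\|\ge0$. This function is continuous and bounded: it is at most $\|h(\cdot+\tau)\|_{b\xi}+|a|\|h\|_{b\xi}$, so every integral below is finite because the weights are locally integrable. Then
\[
\frac{1}{\sigma(\ell,w_1)}\int_{-\ell}^{\ell}\Phi(\lambda)w_1(\lambda)\,d\lambda\le \frac{K}{m}\cdot\frac{1}{\sigma(\ell,w_2)}\int_{-\ell}^{\ell}\Phi(\lambda)w_2(\lambda)\,d\lambda\longrightarrow0
\]
as $\ell\to\infty$. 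Hence $h\in{\rm{PSAP}}_{\xi,\tau,a}(\mathbb{R},\mathbb{E},w_1)$.

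There is no real obstacle here. The only care needed is to use the two-sided bound, upper for the numerator and lower for the denominator, which is exactly what $w_1\sim w_2$ supplies. One should also note that $\sigma(\ell,w_i)>0$ for $\ell>0$, because each $w_i$ is positive almost everywhere.
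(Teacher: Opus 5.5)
Your proof is correct and complete. The a.e. bound $m\,w_2\le w_1\le K\,w_2$ controls the numerator from above and the denominator from below, which gives $\frac{1}{\sigma(\ell,w_1)}\int_{-\ell}^{\ell}\Phi w_1\,d\lambda\le \frac{K}{m}\cdot\frac{1}{\sigma(\ell,w_2)}\int_{-\ell}^{\ell}\Phi w_2\,d\lambda$. Your symmetry observation (with constants $1/K$ and $1/m$ for $w_2/w_1$) then gives the reverse inclusion.

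The paper states this theorem without any proof and passes directly to the corollary for $w\in\mathcal{W}_b$, so there is no argument of theirs to compare against. Your two-sided comparison is the standard argument from the weighted pseudo almost periodic setting, and it fills in what the paper omits.
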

A direct consequence of Theorem \ref{dbdhb} is the following corollary.
\begin{Corollary}
If $w\in \mathcal{W}_b$, then ${\rm{PSAP}}_{\xi,\tau,a}(\mathbb{R},\mathbb{E},w)={\rm{PSAP}}_{\xi,\tau,a}(\mathbb{R},\mathbb{E})$.
\end{Corollary}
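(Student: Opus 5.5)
The plan is to derive the Corollary from Theorem \ref{dbdhb} with the constant weight $w_2\equiv 1$. By definition, ${\rm{PSAP}}_{\xi,\tau,a}(\mathbb{R},\mathbb{E})$ is exactly ${\rm{PSAP}}_{\xi,\tau,a}(\mathbb{R},\mathbb{E},1)$, so it suffices to show $w\sim 1$, that is, $w/1=w\in\mathcal{W}_b$. This is precisely the hypothesis.

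The steps, in order, are as follows.

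First, I check that the constant function $1$ belongs to $\mathcal{W}$. It is locally integrable, nonnegative and positive everywhere. Moreover it lies in $\mathcal{W}_b$, since it is bounded with infimum $1>0$.

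Second, I note that the quotient $w_1/w_2=w/1=w$ lies in $\mathcal{W}_b$ by assumption. Hence $w\sim 1$ in the sense of the equivalence definition preceding Theorem \ref{dbdhb}.

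Third, I invoke Theorem \ref{dbdhb} with $w_1=w$ and $w_2=1$. This gives ${\rm{PSAP}}_{\xi,\tau,a}(\mathbb{R},\mathbb{E},w)={\rm{PSAP}}_{\xi,\tau,a}(\mathbb{R},\mathbb{E},1)$, and the right-hand side is ${\rm{PSAP}}_{\xi,\tau,a}(\mathbb{R},\mathbb{E})$ by the notational convention.

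There is no real obstacle here. The only point needing care is the direction of the quotient in the definition of $\sim$. If the reader worried about symmetry, one can also observe that $1/w$ is bounded with positive infimum whenever $w\in\mathcal{W}_b$: indeed $1/w\le 1/\inf w$ and $1/w\ge 1/\sup w>0$. So $1\sim w$ holds as well, and Theorem \ref{dbdhb} applies in either order.

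For robustness, I would add a one-line direct justification independent of the theorem. Set $0<m\le w\le M$ and $F(\lambda)=\|\xi(\lambda)[h(\lambda+\tau)-ah(\lambda)]\|\ge 0$. Then
$$\frac{m}{M}\cdot\frac{1}{2\ell}\int_{-\ell}^{\ell}F\,d\lambda\le\frac{1}{\sigma(\ell,w)}\int_{-\ell}^{\ell}F\,w\,d\lambda\le\frac{M}{m}\cdot\frac{1}{2\ell}\int_{-\ell}^{\ell}F\,d\lambda,$$
using $2\ell m\le\sigma(\ell,w)\le 2\ell M$. Therefore one mean tends to zero if and only if the other does, which gives the equality of the two sets.
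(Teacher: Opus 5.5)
Your proposal is correct and matches the paper, which also obtains this corollary directly from Theorem \ref{dbdhb} with $w_2\equiv 1$, using $w/1=w\in\mathcal{W}_b$. Your sandwich estimate via $2\ell m\le\sigma(\ell,w)\le 2\ell M$ is a worthwhile addition, since the paper states Theorem \ref{dbdhb} without proof and your estimate makes the corollary self-contained.
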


Combining Theorem 1 in \cite{Liangm7} and the fixed point technique in the space $BC_\xi(\mathbb{R},\mathbb{E})$, we can obtain the following result.
\begin{Theorem}
If $w_1\nsim w_2$, generally $${\rm{PSAP}}_{\xi,\tau,a}(\mathbb{R},\mathbb{E},w_1)\neq{\rm{PSAP}}_{\xi,\tau,a}(\mathbb{R},\mathbb{E},w_2).$$ In particular, for any $\tau\in \mathbb{R}$,
there exists sufficiently large $|a|$ and $w\notin \mathcal{W}_b$, such that $${\rm{PSAP}}_{\tau,a}(\mathbb{R},\mathbb{E},w)\neq{\rm{PSAP}}_{\tau,a}(\mathbb{R},\mathbb{E}).$$
\end{Theorem}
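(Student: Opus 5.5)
The plan is to build an explicit counterexample. We take $\xi(\lambda)=a^\uparrow(-\lambda)$ and let $k(\lambda):=a^\uparrow(-\lambda-\tau)h(\lambda+\tau)-a^\uparrow(-\lambda)h(\lambda)$. Since $a^\uparrow(-\lambda)\cdot a=a^\uparrow(-\lambda+\tau)$, we have $|\xi(\lambda)[h(\lambda+\tau)-ah(\lambda)]|=|a|\,\|g(\lambda+\tau)-g(\lambda)\|$, where $g(\lambda):=a^\uparrow(-\lambda)h(\lambda)$. Here $h\in BC_{\tau,a}$ exactly when $g\in BC(\mathbb{R},\mathbb{E})$, by Lemma \ref{jjdjdj}. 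So $h\in{\rm PSAP}_{\tau,a}(\mathbb{R},\mathbb{E},w)$ if and only if the bounded continuous function $\phi(\lambda):=\|g(\lambda+\tau)-g(\lambda)\|$ is $w$-ergodic, i.e. its weighted mean vanishes. This reduces the theorem to the scalar weighted-ergodic setting of \cite{Liangm7}. Theorem 1 there supplies, for inequivalent weights, a bounded nonnegative function whose mean is zero for one weight but not the other.

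Next we choose the weight and the test function. Take $w(\lambda)=e^{|\lambda|}$, or $w=1+|\lambda|^n$; in either case $w\notin\mathcal{W}_b$ and $w\nsim 1$. Take $\phi$ to be a sum of bumps. For the unweighted mean to vanish while the $w$-mean does not, put bumps of height $1$ and unit width near $\pm n$ for every $n$, but with sparse support in the $w\equiv1$ sense. For $w=e^{|\lambda|}$, a single bump on $[\ell-1,\ell]$ already carries a fixed fraction of $\sigma(\ell,w)$. So we place a bump on $[n-1,n]$ only for $n$ in a sparse set of integers, e.g. $n=m^2$. The unweighted mean is then $O(1/\sqrt{\ell})\to0$. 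Along $\ell=m^2$, however, the $w$-mean is bounded below by a positive constant, so the $w$-mean does not tend to $0$. (If one prefers a limit that exists but is nonzero, take all integers instead and use the fraction of mass near the endpoint.)

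To realize $\phi$ as $\|g(\cdot+\tau)-g(\cdot)\|$, we define $g$ by a fixed point / recursion in $BC(\mathbb{R},\mathbb{E})$. This is the ``fixed point technique in $BC_\xi$'' mentioned in the text. Fix a unit vector $e\in\mathbb{E}$ and set $g(\lambda)=\sum_j \beta(\lambda-j\tau)$, with $\beta$ a scalar bump times $e$ supported in an interval of length $<\tau$ around the chosen sparse points. The translates then do not overlap, so $g(\lambda+\tau)-g(\lambda)$ has norm equal to the bump profile at those points. Then $h(\lambda):=a^\uparrow(\lambda)g(\lambda)$, and $h(\cdot+\tau)=a\,a^\uparrow(\lambda)g(\lambda+\tau)$ also lies in $BC_{\tau,a}$. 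The role of ``sufficiently large $|a|$'' is only to make the scaling $|a|^{\lambda/\tau}$ compatible with a given choice of $w$ tied to $a$, such as $w(\lambda)=|a|^{|\lambda|/\tau}$. We would present the example with $w=|a|^\uparrow(|\lambda|)$ and $|a|>1$ to match the statement's phrasing.

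The main obstacle is the estimate showing the $w$-mean does not vanish while the unweighted mean does. This requires controlling $\sigma(\ell,w)\sim 2\tau|a|^{\ell/\tau}/\ln|a|$ against the bump mass near $\ell$, which is of order $|a|^{\ell/\tau}$ times a constant depending on $\ln|a|$. We will try first to compute both means directly along the sequence $\ell=m^2$. Uniformity in $\ell$ is where the choice of large $|a|$ and the sparse bump placement must be tuned.
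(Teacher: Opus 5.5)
Your argument, carried out with $w=e^{|\lambda|}$, is correct. It is a genuinely different and much more explicit route than the paper's, which gives no argument beyond saying the result follows by combining Theorem 1 of \cite{Liangm7} with the fixed point technique in $BC_\xi(\mathbb{R},\mathbb{E})$.

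You use the isometry $h\mapsto g=a^\uparrow(-\cdot)h$ of $BC_{\tau,a}$ onto $BC(\mathbb{R},\mathbb{E})$ together with $\|a^\uparrow(-\lambda)[h(\lambda+\tau)-ah(\lambda)]\|=|a|\,\|g(\lambda+\tau)-g(\lambda)\|$. Hence $h\in{\rm PSAP}_{\tau,a}(\mathbb{R},\mathbb{E},w)$ if and only if $\|g(\cdot+\tau)-g\|$ has vanishing $w$-mean, and you then exhibit one $g$. This is self-contained and proves more than stated. Since $a$ drops out after the reduction, the separation holds for every $a\neq0$ and $\tau>0$ with the single weight $e^{|\lambda|}$. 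Neither ``sufficiently large $|a|$'' nor a weight tied to $a$ is needed.

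The ``main obstacle'' also disappears. Take $g=\sum_{m\ge m_0}\beta(\cdot-m^2)e$ with $\beta\ge0$ continuous, nonzero, supported in $[-1,0]$, and $m_0>\tau/2$. Then $\|g(\cdot+\tau)-g\|=\sum_{m\ge m_0}\phi_0(\cdot-m^2)$, where $\phi_0=|\beta(\cdot+\tau)-\beta|$ is supported in $[-1-\tau,0]$. Moreover $c_0=\int\phi_0>0$, because a compactly supported nonzero $\beta$ is not $\tau$-periodic. The unweighted mean is $O(\ell^{-1/2})$. At $\ell=m^2$ the $e^{|\lambda|}$-mean is at least $c_0e^{m^2-1-\tau}/(2(e^{m^2}-1))\ge c_0e^{-1-\tau}/2$. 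A subsequence suffices, so no uniformity in $\ell$ is required.

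Several side remarks are wrong and should be removed.

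First, $w=1+|\lambda|^n$ cannot work. For bounded $\phi\ge0$ on $[0,\infty)$, $\int_0^\ell\phi=o(\ell)$ gives $\int_0^\ell\phi(t)t^n\,dt\le\ell^n\int_0^\ell\phi=o(\ell^{n+1})$. Conversely, $\int_0^\ell\phi\le\epsilon\ell\|\phi\|_\infty+(\epsilon\ell)^{-n}\int_0^\ell\phi(t)t^n\,dt$ for every $\epsilon\in(0,1)$. Apply this on both half-lines to the bounded function $\|\xi(\lambda)[h(\lambda+\tau)-ah(\lambda)]\|$. You get ${\rm PSAP}_{\xi,\tau,a}(\mathbb{R},\mathbb{E},1+|\lambda|^n)={\rm PSAP}_{\xi,\tau,a}(\mathbb{R},\mathbb{E})$ for every $\xi$, even though $1+|\lambda|^n\nsim1$. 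So you must not cite \cite{Liangm7} as supplying a separating function for every inequivalent pair. The first sentence of the theorem can only mean ``not necessarily equal'', which your single example settles.

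Second, the parenthetical variant with bumps at all integers fails, since the unweighted mean is then positive as well.

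Third, the sum $\sum_j\beta(\lambda-j\tau)$ must run over a sparse set of indices. Over all $j\in\mathbb{Z}$ it is $\tau$-periodic and gives $\phi\equiv0$. No fixed-point argument is involved; $g$ is explicit.
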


\begin{Lemma}\label{ddj}
For any $h_i\ (i=1,2)$, $h\in {\rm{PSAP}}_{\xi,\tau,a}(\mathbb{R},\mathbb{E},w)$, the assertions listed below are true:\\
(1) $h_1+h_2\in {\rm{PSAP}}_{\xi,\tau,a}(\mathbb{R},\mathbb{E},w)$, and, for any $c\in \mathbb{C}$, $ch\in {\rm{PSAP}}_{\xi,\tau,a}(\mathbb{R},\mathbb{E},w)$.\\
(2) Assume for each $r\in \mathbb{R}$, $w_r\sim w$ and $|\xi_r|\sim |\xi|$. Then, for any $r\in \mathbb{R}$, the function $h_r(t)\in {\rm{PSAP}}_{\xi,\tau,a}(\mathbb{R},\mathbb{E},w)$.
\end{Lemma}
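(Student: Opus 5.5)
For (1), I will work directly from Definition \ref{hddj1}. First I check membership in $BC_\xi(\mathbb{R},\mathbb{E})$: since $BC_\xi(\mathbb{R},\mathbb{E})$ is a linear space, $h_1+h_2$, $(h_1+h_2)(\cdot+\tau)$, $ch$ and $ch(\cdot+\tau)$ all lie in it. Next I use linearity of $h\mapsto h(\cdot+\tau)-ah(\cdot)$ and the triangle inequality to get
\[
\left\|\xi(\lambda)[(h_1+h_2)(\lambda+\tau)-a(h_1+h_2)(\lambda)]\right\|\le \sum_{i=1}^2\left\|\xi(\lambda)[h_i(\lambda+\tau)-ah_i(\lambda)]\right\|.
\]
Multiplying by $w(\lambda)$, integrating over $[-\ell,\ell]$, dividing by $\sigma(\ell,w)$ and letting $\ell\to\infty$ then gives the required limit. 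The scalar case is immediate, because the integrand simply gets multiplied by $|c|$.

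For (2), fix $r$ and set $g=h_r$. The first step is to show $g,\ g(\cdot+\tau)\in BC_\xi$. I will write
\[
\|\xi(\lambda)h(\lambda+r)\|=\left|\frac{\xi(\lambda)}{\xi(\lambda+r)}\right|\,\|\xi(\lambda+r)h(\lambda+r)\|,
\]
and since $|\xi_r|\sim|\xi|$ the quotient $|\xi/\xi_r|$ is bounded above by some $C_1$. The same argument handles $g(\cdot+\tau)=h(\cdot+\tau)_r$.

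For the ergodic part, let $F(s)=\|\xi(s)[h(s+\tau)-ah(s)]\|$, which is a bounded nonnegative function. Then
\[
\|\xi(\lambda)[g(\lambda+\tau)-ag(\lambda)]\|\le C_1F(\lambda+r).
\]
From $w_r\sim w$ I get $w(\lambda)\le C_2 w(\lambda+r)$. Substituting $s=\lambda+r$ gives
\[
\frac{1}{\sigma(\ell,w)}\int_{-\ell}^{\ell}\|\xi(\lambda)[g(\lambda+\tau)-ag(\lambda)]\|w(\lambda)\,d\lambda\le \frac{C_1C_2}{\sigma(\ell,w)}\int_{-\ell+r}^{\ell+r}F(s)w(s)\,ds \le C_1C_2\,\frac{\sigma(\ell+|r|,w)}{\sigma(\ell,w)}\cdot\frac{1}{\sigma(\ell+|r|,w)}\int_{-\ell-|r|}^{\ell+|r|}F(s)w(s)\,ds .
\]
The last factor tends to $0$ by hypothesis. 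So it remains to show that $\sigma(\ell+|r|,w)/\sigma(\ell,w)$ stays bounded as $\ell\to\infty$.

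This is the main obstacle. I will try to derive it from $w_r\sim w$ (and $w_{-r}\sim w$):
\[
\sigma(\ell+|r|,w)=\sigma(\ell,w)+\int_{\ell}^{\ell+|r|}w+\int_{-\ell-|r|}^{-\ell}w,
\]
and the two end pieces are translates of $\int_{\ell-|r|}^{\ell}w$ and $\int_{-\ell}^{-\ell+|r|}w$. By the equivalence they are bounded by a constant times those integrals, which are in turn at most $\sigma(\ell,w)$ once $\ell\ge|r|$. This yields
\[
\sigma(\ell+|r|,w)\le (1+2C_3)\,\sigma(\ell,w),
\]
and the proof will be complete.
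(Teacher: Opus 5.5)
Your proposal is correct and follows essentially the same route as the paper. The paper also changes variables using the bounds supplied by $|\xi_r|\sim|\xi|$ and $w_r\sim w$, enlarges the window to $[-\ell-|r|,\ell+|r|]$, and controls $\sigma(\ell+|r|,w)/\sigma(\ell,w)$ through translated integrals. The only difference is minor: the paper covers $[-\ell-|r|,\ell+|r|]$ by two shifted copies of $[-\ell,\ell]$, giving the factor $2C$, whereas you add two shifted end pieces, giving $1+2C_3$. Your explicit check that $h_r$ and $h_r(\cdot+\tau)$ lie in $BC_\xi(\mathbb{R},\mathbb{E})$ fills in a detail the paper leaves implicit.
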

\begin{proof}
(1) According to Definition \ref{hddj1}, if $h,\ h_i\ (i=1,2)\in {\rm{PSAP}}_{\xi,\tau,a}(\mathbb{R},\mathbb{E},w)$, then
$$\lim_{\ell\rightarrow \infty}\frac{1}{\sigma(\ell,w)}\int_{-\ell}^\ell \left\|\xi(\lambda)[h(\lambda+\tau)-ah(\lambda)]\right\|w(\lambda)d\lambda=0,$$
and
$$\lim_{\ell\rightarrow \infty}\frac{1}{\sigma(\ell,w)}\int_{-\ell}^\ell \left\|\xi(\lambda)[h_i(\lambda+\tau)-ah_i(\lambda)]\right\|w(\lambda)d\lambda=0,\ i=1,2.$$
So,
\begin{eqnarray*}
&&\lim_{\ell\rightarrow \infty}\frac{1}{\sigma(\ell,w)}\int_{-\ell}^\ell \left\|\xi(\lambda)[h_1(\lambda+\tau)+h_2(\lambda+\tau)-a(h_1(\lambda)+h_2(\lambda))]\right\|w(\lambda)d\lambda\\
&\leq&\lim_{\ell\rightarrow \infty}\frac{1}{\sigma(\ell,w)}\int_{-\ell}^\ell \left\|\xi(\lambda)[h_1(\lambda+\tau)-ah_1(\lambda)]\right\|w(\lambda)d\lambda\\
&&+\lim_{\ell\rightarrow \infty}\frac{1}{\sigma(\ell,w)}\int_{-\ell}^\ell \left\|\xi(\lambda)[h_2(\lambda+\tau)-ah_2(\lambda)]\right\|w(\lambda)d\lambda\\
&=&0,
\end{eqnarray*}
and for any $c\in \mathbb{C}$,
\begin{eqnarray*}
&&\lim_{\ell\rightarrow \infty}\frac{1}{\sigma(\ell,w)}\int_{-\ell}^\ell \left\|\xi(\lambda)[ch(\lambda+\tau)-ach(\lambda)]\right\|w(\lambda)d\lambda\\
&\leq&|c|\lim_{\ell\rightarrow \infty}\frac{1}{\sigma(\ell,w)}\int_{-\ell}^\ell \left\|\xi(\lambda)[h(\lambda+\tau)-ah(\lambda)]\right\|w(\lambda)d\lambda=0.
\end{eqnarray*}
From the definition, one can readily see that $h_1+h_2,\ ch\in {\rm{PSAP}}_{\xi,\tau,a}(\mathbb{R},\mathbb{E},w)$.

(2) Per Definition \ref{hddj1}, it follows that
$$\lim_{\ell\rightarrow \infty}\frac{1}{\sigma(\ell,w)}\int_{-\ell}^\ell \left\|\xi(\lambda)[h(\lambda+\tau)-ah(\lambda)]\right\|w(\lambda)d\lambda=0.$$
Therefore, for every $r\in \mathbb{R}$, the following equality holds:
\begin{eqnarray*}
&&\lim_{\ell\rightarrow \infty}\frac{1}{\sigma(\ell,w)}\int_{-\ell}^\ell \left\|\xi(\lambda)[h(\lambda+r+\tau)-ah(\lambda+r)]\right\|w(\lambda)d\lambda\\
&=&\lim_{\ell\rightarrow \infty}\frac{1}{\sigma(\ell,w)}\int_{-\ell+r}^{\ell+r} \left\|\xi(\lambda-r)[h(\lambda+\tau)-ah(\lambda)]\right\|w(\lambda-r)d\lambda.
\end{eqnarray*}
Given that $w_r\sim w$ and $|\xi_r|\sim |\xi|$ hold for all $r\in \mathbb{R}$, we can find a $C>0$ such that
\begin{center}
$\max\{w(\lambda-r),w(\lambda-|r|),w(\lambda+|r|)\} \leq Cw(\lambda)$, for a.e. $\lambda\in \mathbb{R}$,
\end{center}
and
\begin{center}
$|\xi(\lambda-r)| \leq C|\xi(\lambda)|$, for any $\lambda\in \mathbb{R}$.
\end{center}
So, we have
\begin{eqnarray*}
&&\lim_{\ell\rightarrow \infty}\frac{1}{\sigma(\ell,w)}\int_{-\ell}^\ell \left\|\xi(\lambda)[h(\lambda+r+\tau)-ah(\lambda+r)]\right\|w(\lambda)d\lambda\\
&\leq&\lim_{\ell\rightarrow \infty}\frac{C^2}{\sigma(\ell,w)}\int_{-\ell-|r|}^{\ell+|r|} \left\|\xi(\lambda)[h(\lambda+\tau)-ah(\lambda)]\right\|w(\lambda)d\lambda\\
&\leq&\lim_{\ell\rightarrow \infty}\frac{C^2\sigma(\ell+|r|,w)}{\sigma(\ell,w)}\frac{1}{\sigma(\ell+|r|,w)}\int_{-\ell-|r|}^{\ell+|r|} \left\|\xi(\lambda)[h(\lambda+\tau)-ah(\lambda)]\right\|w(\lambda)d\lambda\\
&\leq&\lim_{\ell\rightarrow \infty}\frac{2C^3}{\sigma(\ell+|r|,w)}\int_{-\ell-|r|}^{\ell+|r|} \left\|\xi(\lambda)[h(\lambda+\tau)-ah(\lambda)]\right\|w(\lambda)d\lambda,
\end{eqnarray*}
where in the last inequality above we use the fact
\begin{eqnarray*}
\sigma(\ell+|r|,w)&\leq& \int_{-\ell-|r|}^{\ell-|r|}w(\lambda)d\lambda+\int_{-\ell+|r|}^{\ell+|r|}w(\lambda)d\lambda,
\end{eqnarray*}
for sufficiently large $T$. Therefore, we get
$h_r(\lambda)\in {\rm{PSAP}}_{\xi,\tau,a}(\mathbb{R},\mathbb{E},w)$.
\end{proof}
\begin{Theorem}
Suppose for each $r\in \mathbb{R}$, $|\xi_r|\sim |\xi|$. Then ${\rm{PSAP}}_{\xi,\tau,a}(\mathbb{R},\mathbb{E},w)$ is  a Banach space endowed with the norm $\|\cdot\|_{b\xi}$.
\end{Theorem}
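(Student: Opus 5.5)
We mirror the completeness proof given above for ${\rm SAP}_{\xi,\tau,a}(\mathbb{R},\mathbb{E})$, with the pointwise limit condition replaced by the weighted mean. Linearity of ${\rm PSAP}_{\xi,\tau,a}(\mathbb{R},\mathbb{E},w)$ is Lemma \ref{ddj}(1). Since the space sits inside $BC_\xi(\mathbb{R},\mathbb{E})$, which is already a Banach space under $\|\cdot\|_{b\xi}$, it suffices to show that ${\rm PSAP}_{\xi,\tau,a}(\mathbb{R},\mathbb{E},w)$ is closed in $BC_\xi(\mathbb{R},\mathbb{E})$.

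So let $\{h_n\}$ be Cauchy in ${\rm PSAP}_{\xi,\tau,a}(\mathbb{R},\mathbb{E},w)$. By completeness of $BC_\xi(\mathbb{R},\mathbb{E})$ there is $h\in BC_\xi(\mathbb{R},\mathbb{E})$ with $\|h_n-h\|_{b\xi}\to0$.

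Two things must be checked.

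First, $h(\cdot+\tau)\in BC_\xi(\mathbb{R},\mathbb{E})$. The hypothesis $|\xi_\tau|\sim|\xi|$ gives a constant $C>0$ with $|\xi(\lambda)/\xi(\lambda+\tau)|\le C$ for all $\lambda$. Hence
$$\|\xi(\lambda)h(\lambda+\tau)\|\le C\,\|\xi(\lambda+\tau)h(\lambda+\tau)\|\le C\|h\|_{b\xi},$$
and $h(\cdot+\tau)$ is continuous.

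Second, the mean condition. Fix $\mu>0$ and choose $N$ with $\|h_N-h\|_{b\xi}<\frac{\mu}{3}\min\{1/C,1/|a|\}$. The same three-term splitting as in the ${\rm SAP}$ case gives, for every $\lambda$,
$$\|\xi(\lambda)[h(\lambda+\tau)-ah(\lambda)]\|\le \frac{2\mu}{3}+\|\xi(\lambda)[h_N(\lambda+\tau)-ah_N(\lambda)]\|.$$
This bound is uniform in $\lambda$, which is the key point. Multiply by $w$, integrate over $[-\ell,\ell]$ and divide by $\sigma(\ell,w)$. The constant term contributes exactly $\frac{2\mu}{3}$, because $\frac{1}{\sigma(\ell,w)}\int_{-\ell}^{\ell}w=1$. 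Since $h_N\in{\rm PSAP}_{\xi,\tau,a}(\mathbb{R},\mathbb{E},w)$, the remaining term has weighted mean below $\frac{\mu}{3}$ for all large $\ell$. Therefore the $\limsup$ of the weighted mean for $h$ is at most $\mu$. As $\mu$ is arbitrary, the limit is $0$, and $h\in{\rm PSAP}_{\xi,\tau,a}(\mathbb{R},\mathbb{E},w)$.

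The only delicate point is the term $h(\lambda+\tau)-h_N(\lambda+\tau)$. Convergence in $\|\cdot\|_{b\xi}$ controls it only with the weight $\xi(\lambda+\tau)$, not $\xi(\lambda)$, so we need a comparison between $\xi$ and its translate. This is exactly where the hypothesis $|\xi_r|\sim|\xi|$ is used, and only for $r=\tau$. No assumption on $w$ beyond $w\in\mathcal{W}$ is needed, because the approximation error is uniform and $\sigma(\ell,w)>0$ for large $\ell$ (as $w>0$ a.e.). We need not worry about measurability of the integrand: it is continuous.
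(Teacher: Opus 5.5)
Your proposal is correct and follows the paper's proof essentially step for step. Like the paper, you prove closedness in $BC_\xi(\mathbb{R},\mathbb{E})$, take the constant $C$ from $|\xi_\tau|\sim|\xi|$, choose $N$ with $\|h_N-h\|_{b\xi}<\frac{\mu}{3}\min\{1/C,1/|a|\}$, and apply the same three-term splitting under the weighted mean. Your explicit check that $h(\cdot+\tau)\in BC_\xi(\mathbb{R},\mathbb{E})$ fills in a small step that the paper leaves implicit.
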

\begin{proof}
Lemma \ref{ddj}(1) ensures that ${\rm{PSAP}}_{\xi,\tau,a}(\mathbb{R},\mathbb{E},w)$ forms a vector space. To prove the closedness of the space
 ${\rm{PSAP}}_{\xi,\tau,a}(\mathbb{R},\mathbb{E},w)$, we select a sequence $$\{h_n\}_{n=1}^\infty\subseteq {\rm{PSAP}}_{\xi,\tau,a}(\mathbb{R},\mathbb{E}, w)$$ such that
$$\displaystyle{\lim_{n\rightarrow \infty}h_n=h\in BC_{\xi}(\mathbb{R},\mathbb{E})}.$$ Next we will prove that $h\in {\rm{PSAP}}_{\xi,\tau,a}(\mathbb{R},\mathbb{E},w)$.

Because $|\xi_\tau|\sim |\xi|$, there exists $C>0$ such that $$\left|\frac{\xi(\lambda)}{\xi(\lambda+\tau)}\right|\leq C,\qquad \lambda\in \mathbb{R}.$$
Given any $\mu>0$, one can find a positive integer $N$ and a constant $\ell^{\prime}>0$ such that
$$\|h_N-h\|_{b\xi}=\displaystyle{\sup_{\lambda\in \mathbb{R}}}\|\xi(\lambda)[h_N(\lambda)-h(\lambda)]\|\leq \frac{\mu}{3}\min\left\{\frac{1}{C}, \frac{1}{|a|}\right\},$$
and for any $\ell>\ell^{\prime}$,
$$\frac{1}{\sigma(T,w)}\int_{-\ell}^\ell \left\|\xi(\lambda)[h_N(\lambda+\tau)-ah_N(\lambda)]\right\|w(\lambda)d\lambda\leq \frac{\mu}{3}.$$
Then for any $\ell>\ell^{\prime}$, we have
\begin{eqnarray*}
&&\frac{1}{\sigma(\ell,w)}\int_{-\ell}^\ell \left\|\xi(\lambda)[h(\lambda+\tau)-ah(\lambda)]\right\|w(\lambda)d\lambda\\
&=&\frac{1}{\sigma(\ell,w)}\int_{-\ell}^\ell \left\|\xi(\lambda)[h(\lambda+\tau)-h_N(\lambda+\tau)+h_N(\lambda+\tau)-ah_N(\lambda)+ah_N(\lambda)-ah(\lambda)]\right\|\\
&&\times w(\lambda)d\lambda\\
&\leq&\frac{C}{\sigma(\ell,w)}\int_{-\ell}^\ell \left\|\xi(\lambda+\tau)[h(\lambda+\tau)-h_N(\lambda+\tau)]\right\|w(\lambda)d\lambda\\
&&+\frac{1}{\sigma(\ell,w)}\int_{-\ell}^\ell \left\|\xi(\lambda)[h_N(\lambda+\tau)-ah_N(\lambda)]\right\|w(\lambda)d\lambda\\
&&+\frac{1}{\sigma(\ell,w)}\int_{-\ell}^\ell |a|\left\|\xi(\lambda)[h_N(\lambda)-h(\lambda)]\right\|w(\lambda)d\lambda\\
&\leq&C\|h_N-h\|_{b\xi}+\frac{1}{\sigma(\ell,w)}\int_{-\ell}^\ell \left\|\xi(\lambda)[h_N(\lambda+\tau)-ah_N(\lambda)]\right\|w(\lambda)dt+|a|\|h_N-h\|_{b\xi}\\
&\leq& \frac{\mu}{3}+\frac{\mu}{3}+\frac{\mu}{3}=\mu,
\end{eqnarray*}
which implies that $h\in {\rm{PSAP}}_{\xi,\tau,a}(\mathbb{R},\mathbb{E},w)$. This concludes the proof of the theorem.
\end{proof}

Clearly, when $\xi(\cdot)a^\uparrow(\cdot)\in BC(\mathbb{R},\mathbb{C})$ and $w\in \mathcal{W}_b$, $$(\mathcal{P}_{\tau,a}(\mathbb{R},\mathbb{E}),\|\cdot\|_{b\xi})\subseteq({\rm{SAP}}_{\xi,\tau,a}(\mathbb{R},\mathbb{E}),\|\cdot\|_{b\xi})\subseteq ({\rm{PSAP}}_{\xi,\tau,a}(\mathbb{R},\mathbb{E},w),\|\cdot\|_{b\xi}).$$

Fig. \ref{fig:1} shows the graphs of $h_1(\lambda)=2^\lambda {\rm{sin}(2\pi \lambda)}\in \mathcal{P}_{1,2}(\mathbb{R},\mathbb{R})$. Fig. \ref{fig:2} and Fig. \ref{fig:4} show the graphs of $h_2(\lambda)=\frac{2^\lambda {\rm{sin}(2\pi \lambda)}}{1+\lambda^2}\in {\rm{SAP}}_{1,2}(\mathbb{R},\mathbb{R})$ and $h_3(\lambda)=\frac{2^\lambda {\rm{sin}(2\pi \lambda)}}{1+\lambda^4}\in {\rm{PSAP}}_{1,2}(\mathbb{R},\mathbb{R},w)$ with $w(\lambda)=1+\lambda^2$, which are verified by the graphs of $h_2^{\prime}(\lambda)$ and $h_3^{\prime}(\lambda)$ in Fig. \ref{fig:3} and Fig. \ref{fig:5}, respectively.
\begin{figure}[h!]
\centering
  {
    \raisebox{0.08\height}{
     \begin{minipage}{7cm}
            \includegraphics[height=4.75cm,width=6.5cm]{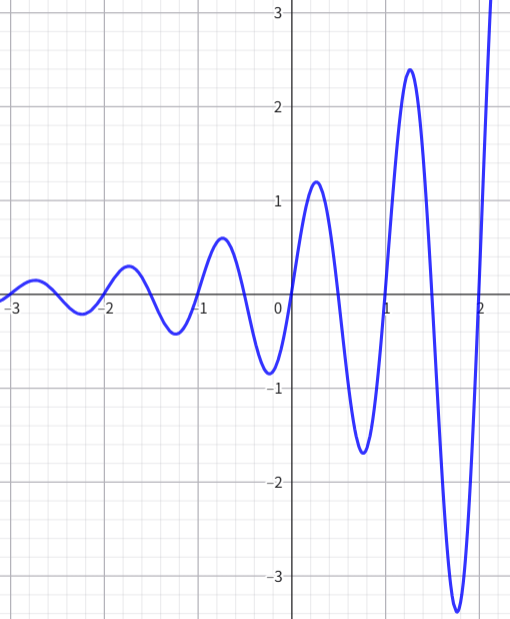}
            \caption{$h_1(\lambda)=2^\lambda {\rm{sin}(2\pi \lambda)}$}
            \label{fig:1}
       \end{minipage}}
   }
\end{figure}
\begin{figure}[h!]
\centering
  {
    \raisebox{0.08\height}{
     \begin{minipage}{7cm}
            \includegraphics[height=4.75cm,width=6.5cm]{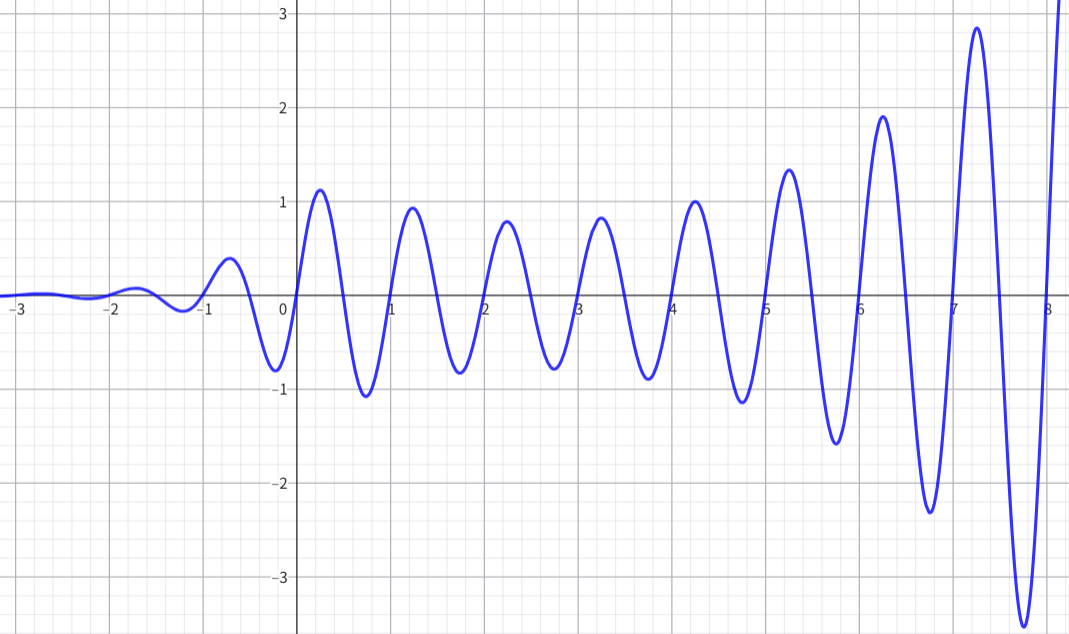}
            \caption{$h_2(\lambda)=\frac{2^\lambda {\rm{sin}(2\pi \lambda)}}{1+\lambda^2}$}
            \label{fig:2}
       \end{minipage}}
       \begin{minipage}{7cm}
            \includegraphics[height=4.3cm,width=6.5cm]{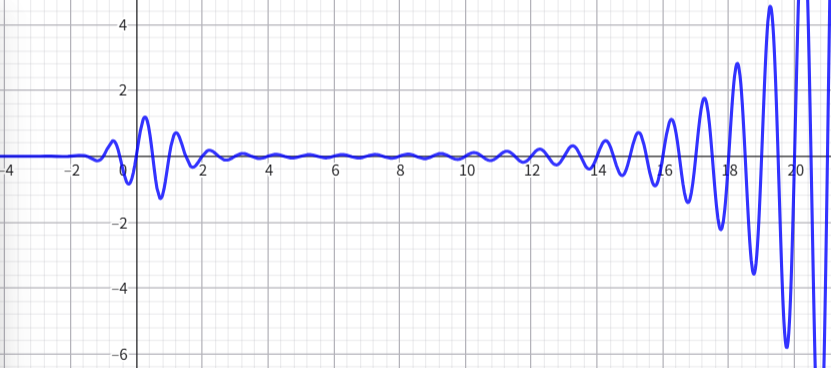}
            \caption{$h_3(\lambda)=\frac{2^\lambda {\rm{sin}(2\pi \lambda)}}{1+\lambda^4}$}
            \label{fig:3}
       \end{minipage}
   }
\end{figure}
\begin{figure}[h!]
\centering
  {
    \raisebox{0.08\height}{
     \begin{minipage}{7cm}
            \includegraphics[height=2.1cm,width=6.5cm]{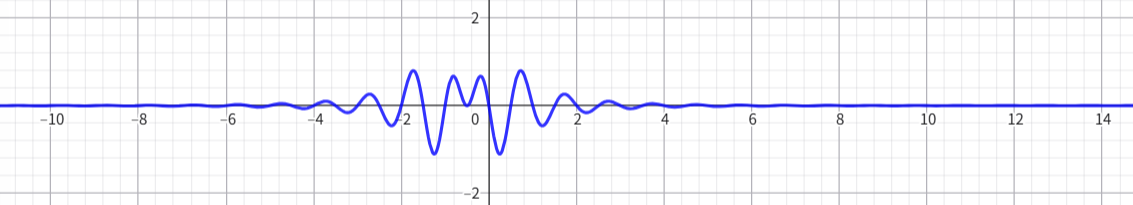}
            \caption{$h_2^{\prime}(\lambda)=2^{-\lambda}[h_2(\lambda+1)-2h_2(\lambda)]$}
            \label{fig:4}
       \end{minipage}}
     \begin{minipage}{7cm}
            \includegraphics[height=1.5cm,width=6.5cm]{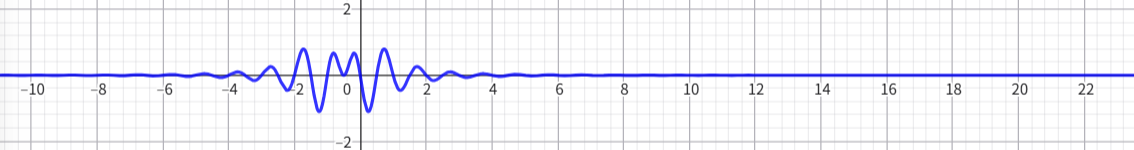}
            \caption{$h_3^{\prime}(\lambda)=\frac{1}{\sigma(\lambda,w)}\int_{-\lambda}^\lambda 2^{-\theta}[h_3(\theta+1)-2h_3(\theta)]w(\theta)d\theta$ with $w(\lambda)=1+\lambda^2$}
            \label{fig:5}
       \end{minipage}
   }
\end{figure}

Next, we prove a convolution  theorem. Before that, let us introduce the following set
\begin{center}
$\widetilde{\mathbb{U}}:=\Big\{\xi(\lambda)\in C(\mathbb{R},\mathbb{C}):\ \xi\neq 0$, there exists a positive constant $\mathcal{M}$ such that $ \left|\frac{\xi(\imath+\jmath)}{\xi(\imath)\xi(\jmath)}\right|\leq \mathcal{M}$ for any $\imath,\ \jmath\in \mathbb{R}\Big\}$.
\end{center}
\begin{Theorem}
Suppose $\{\mathcal{T}(\lambda)\}_{\lambda\geq 0}$ is a family of bounded operators on $\mathbb{E}$ satisfying that $\{\xi(\lambda)\mathcal{T}(\lambda)\}_{\lambda\geq 0}$ is strongly continuous and $\xi(\lambda)\mathcal{T}(\lambda)\in
L^1([0,\infty),\mathcal{B}(\mathbb{E}))$. Also suppose  $\xi\in \widetilde{\mathbb{U}}$, $w_r\sim w$ and $|\xi_r|\sim |\xi|$ for any $r\in \mathbb{R}$. Then for any $h\in {\rm{PSAP}}_{\xi,\tau,a}(\mathbb{R},\mathbb{E},w)$, the convolution
$$\phi(\lambda):=\int_{-\infty}^\lambda \mathcal{T}(\lambda-\theta)h(\theta)d\theta$$
also belongs to ${\rm{PSAP}}_{\xi,\tau,a}(\mathbb{R},\mathbb{E},w)$.
\end{Theorem}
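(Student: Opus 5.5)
We write $\phi(\lambda)=\int_0^\infty \mathcal{T}(s)h(\lambda-s)\,ds$ and work throughout with the weighted quantities $\xi(\lambda)\phi(\lambda)$ and $\xi(\lambda)[\phi(\lambda+\tau)-a\phi(\lambda)]$.

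\emph{Step 1 (membership in $BC_\xi$).} Since $\xi\in\widetilde{\mathbb{U}}$, we have $|\xi(\lambda)|\le \mathcal{M}|\xi(s)|\,|\xi(\lambda-s)|$. Hence
$$\|\xi(\lambda)\phi(\lambda)\|\le \mathcal{M}\int_0^\infty \|\xi(s)\mathcal{T}(s)\|\,\|\xi(\lambda-s)h(\lambda-s)\|\,ds\le \mathcal{M}\|\xi\mathcal{T}\|_{L^1}\|h\|_{b\xi}.$$
Continuity of $\phi$ follows from the strong continuity of $\xi\mathcal{T}$ together with dominated convergence. The same argument applies to $\phi(\cdot+\tau)=\int_0^\infty\mathcal{T}(s)h(\cdot+\tau-s)\,ds$, because $h(\cdot+\tau)\in BC_\xi$ by hypothesis.

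\emph{Step 2 (pointwise bound on the defect).} Set $g(\lambda):=\xi(\lambda)[h(\lambda+\tau)-ah(\lambda)]$, which is bounded. Then
$$\xi(\lambda)[\phi(\lambda+\tau)-a\phi(\lambda)]=\int_0^\infty \xi(\lambda)\mathcal{T}(s)[h(\lambda-s+\tau)-ah(\lambda-s)]\,ds.$$
Using $\widetilde{\mathbb{U}}$ again, with $\lambda=s+(\lambda-s)$, we get
$$\|\xi(\lambda)[\phi(\lambda+\tau)-a\phi(\lambda)]\|\le \mathcal{M}\int_0^\infty \|\xi(s)\mathcal{T}(s)\|\,\|g(\lambda-s)\|\,ds.$$

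\emph{Step 3 (weighted average).} Integrate against $w$ over $[-\ell,\ell]$, divide by $\sigma(\ell,w)$, and apply Fubini to obtain the bound
$$\mathcal{M}\int_0^\infty\|\xi(s)\mathcal{T}(s)\|\,\Phi_\ell(s)\,ds,\qquad \Phi_\ell(s):=\frac{1}{\sigma(\ell,w)}\int_{-\ell}^{\ell}\|g(\lambda-s)\|w(\lambda)\,d\lambda.$$
For each fixed $s$, $\Phi_\ell(s)$ is exactly the quantity controlled in the proof of Lemma \ref{ddj}(2), applied to the translate $h_{-s}$. That argument uses $w_r\sim w$ and the estimate $\sigma(\ell+|r|,w)\le 2C\sigma(\ell,w)$, and yields $\Phi_\ell(s)\to0$ as $\ell\to\infty$. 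Here the translation acts on $g$ directly, so the condition $|\xi_r|\sim|\xi|$ is not even needed.

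\emph{Step 4 (passing to the limit).} We also have $\Phi_\ell(s)\le \|g\|_\infty$ uniformly in $\ell$ and $s$. Since $\|\xi(\cdot)\mathcal{T}(\cdot)\|\in L^1$, dominated convergence in $s$ gives that the average tends to $0$, so $\phi\in{\rm PSAP}_{\xi,\tau,a}(\mathbb{R},\mathbb{E},w)$.

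\emph{Main obstacle.} The delicate point is Step 3/4. The constant $C$ in $w(\lambda+s)\le Cw(\lambda)$ depends on $s$, so the convergence $\Phi_\ell(s)\to0$ is not uniform in $s$. This is precisely why we use pointwise convergence plus a uniform dominating bound, rather than trying to pull a uniform constant out of the integral. A secondary point is measurability of $s\mapsto\|\xi(s)\mathcal{T}(s)\|$; if needed, we replace the operator norm by the strongly continuous integrand applied to vectors, or take the $L^1$ hypothesis in the Bochner sense.
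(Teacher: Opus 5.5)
Your proposal is correct and follows essentially the same route as the paper's proof. Both arguments use the $\widetilde{\mathbb{U}}$ splitting $|\xi(\lambda)|\le \mathcal{M}|\xi(s)|\,|\xi(\lambda-s)|$ for the $BC_\xi$ bound and for the defect, Fubini, the translation argument of Lemma \ref{ddj}(2) for each fixed $s$, and dominated convergence in $s$; you are in fact somewhat more careful than the paper, supplying the uniform dominating bound $\Phi_\ell(s)\le\|g\|_\infty$, the check that $\phi(\cdot+\tau)\in BC_\xi(\mathbb{R},\mathbb{E})$, and the correct observation that only $w_r\sim w$ is needed in the translation step.
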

\begin{proof}
Since $\xi(\lambda)\mathcal{T}(\lambda)\in L^1([0,\infty),\mathcal{B}(\mathbb{E}))$ and $h\in {\rm{PSAP}}_{\xi,\tau,a}(\mathbb{R},\mathbb{E},w)$, we get
$$\int_0^\infty \|\xi(\lambda)\mathcal{T}(\lambda)\|dt< \infty,\ h(\lambda)=\frac{1}{\xi(\lambda)}h_1(\lambda),\ h_1(\lambda)\in BC(\mathbb{R},\mathbb{E}).$$
Then the fact $\xi\in \widetilde{\mathbb{U}}$ leads to
\begin{eqnarray*}
\|\phi(\lambda)\|&=&\left\|\int_{-\infty}^\lambda \mathcal{T}(\lambda-\theta)h(\theta)d\theta\right\|=\left\|\int_0^\infty \frac{1}{\xi(\lambda-\theta)}\mathcal{T}(\theta)h_1(\lambda-\theta)d\theta\right\|\\
&\leq&\int_0^\infty \left|\frac{1}{\xi(\lambda-\theta)}\right|\cdot\|\mathcal{T}(\theta)\|\|h_1(\lambda-\theta)\|d\theta\\
&\leq&\int_0^\infty \mathcal{M}\left|\frac{\xi(\theta)}{\xi(\lambda)}\right|\cdot\|\mathcal{T}(\theta)\|\|h_1(\lambda-\theta)\|d\theta,
\end{eqnarray*}
and
\begin{eqnarray*}
&&\int_0^\infty |\xi(\theta)|\|\mathcal{T}(\theta)\|\|h_1(\lambda-\theta)\|d\theta\leq  \|h_1\|_{\infty}\int_0^\infty \left \|\xi(\theta)\mathcal{T}(\theta)\right\|d\theta<\infty,
\end{eqnarray*}
which means that $\displaystyle{\int_0^\infty \xi(\theta)\mathcal{T}(\theta)h_1(\lambda-\theta)ds\in BC(\mathbb{R},\mathbb{E})}$ and $\phi(\lambda)\in BC_{\xi}(\mathbb{R},\mathbb{E})$. By using the Fubini theorem and the fact $\xi\in \widetilde{\mathbb{U}}$, we have
\begin{eqnarray*}
&&\frac{1}{\sigma(\ell,w)}\int_{-\ell}^\ell \left\|\xi(\lambda)[\phi(\lambda+\tau)-a\phi(\lambda)]\right\|w(\lambda)d\lambda\\
&=&\frac{1}{\sigma(\ell,w)}\int_{-\ell}^\ell \left\|\xi(\lambda)\left[\int_{-\infty}^{\lambda+\tau} \mathcal{T}(\lambda+\tau-\theta)h(\theta)ds-a\int_{-\infty}^\lambda \mathcal{T}(\lambda-\theta)h(\theta)d\theta\right]\right\|w(\lambda)d\lambda\\
&=&\frac{1}{\sigma(\ell,w)}\int_{-\ell}^\ell \left\|\xi(\lambda)\left[\int_{-\infty}^{\lambda} \mathcal{T}(\lambda-\theta)h(\theta+\tau)d\theta-a\int_{-\infty}^\lambda \mathcal{T}(\lambda-\theta)h(\theta)ds\right]\right\|w(\lambda)d\lambda\\
&=&\frac{1}{\sigma(\ell,w)}\int_{-\ell}^\ell \left\|\xi(\lambda)\int_0^\infty \mathcal{T}(\theta)[h(\lambda-\theta+\tau)-ah(\lambda-\theta)]d\theta\right\|w(\lambda)d\lambda\\
&\leq&\frac{1}{\sigma(\ell,w)}\int_{-\ell}^\ell \int_0^\infty \left\|\xi(\lambda)\mathcal{T}(\theta)[h(\lambda-\theta+\tau)-ah(\lambda-\theta)]\right\|w(\lambda)d\theta d\lambda\\
&\leq&\mathcal{M}\int_0^\infty \|\xi(\theta)\mathcal{T}(\theta)\|\left[\frac{1}{\sigma(\ell,w)}\int_{-\ell}^\ell  \left\|\xi(\lambda-\theta)[h(\lambda-\theta+\tau)-ah(\lambda-\theta)]\right\|w(\lambda)d\lambda\right]d\theta.
\end{eqnarray*}
Applying the Lebesgue's dominated convergence theorem, the fact $h\in {\rm{PSAP}}_{\xi,\tau,a}(\mathbb{R},\mathbb{E},w)$ and Lemma \ref{ddj}(2), we can deduce
$$\lim_{\ell\rightarrow \infty}\frac{1}{\sigma(\ell,w)}\int_{-\ell}^\ell \left\|\xi(\lambda)[\phi(\lambda+\tau)-a\phi(\lambda)]\right\|w(\lambda)dt=0,$$
i.e. $\phi\in {\rm{PSAP}}_{\xi,\tau,a}(\mathbb{R},\mathbb{E},w).$ We have thus finished the proof of this theorem.
\end{proof}
\begin{Example}
Obviously, $\xi(t)=a^\uparrow(t)\in \widetilde{\mathbb{U}}$. And we can also directly verify that
$$\xi(t)=a^\uparrow(-t)+a^\uparrow(t)\in \widetilde{\mathbb{U}}$$
with $\mathcal{M}=1$.

Next we give another example.

For any $a>1,\ \tau>0$, let $\xi(t)=a^\uparrow(-t)+\xi_1(t)$, where
\[\xi_1(t)=
\left\{
\begin{array}{lr}
1,\ \ \qquad \ t< 0,\\
1-t,\ \quad t\in [0,1],\\
0,\ \ \qquad \ t> 1.
\end{array}
\right.
\]
Clearly, $\xi\neq 0$, $\xi(\cdot)a^\uparrow(\cdot)\in BC(\mathbb{R},\mathbb{C})$ and $|\xi_r|\sim |\xi|$ for any $r\in \mathbb{R}$. And we can compute
\begin{eqnarray*}
&&\left|\frac{\xi(t+s)}{\xi(t)\xi(s)}\right|=\frac{a^\uparrow(-t-s)+\xi_1(t+s)}{a^\uparrow(-t-s)+[a^\uparrow(-t)\xi_1(s)+a^\uparrow(-s)\xi_1(t)]+\xi_1(t)\xi_1(s)}.
\end{eqnarray*}
When $t+s\leq 1$,
\begin{eqnarray*}
&&\left|\frac{\xi(t+s)}{\xi(t)\xi(s)}\right|\leq \frac{a^\uparrow(-t-s)+1}{a^\uparrow(-t-s)}=1+a^\uparrow(t+s)\leq 1+a^\uparrow(1);
\end{eqnarray*}
when $t+s>1$,
\begin{eqnarray*}
&&\left|\frac{\xi(t+s)}{\xi(t)\xi(s)}\right|\leq\frac{a^\uparrow(-t-s)}{a^\uparrow(-t-s)}=1.
\end{eqnarray*}
So $\xi\in \widetilde{\mathbb{U}}$ with $\mathcal{M}=1+a^\uparrow(1)$.

However, $\xi(t)=(a^\uparrow(t)+t^2)^{-1}\notin \widetilde{\mathbb{U}}$. Actually, we have
\begin{eqnarray*}
&&\left|\frac{\xi(t+s)}{\xi(t)\xi(s)}\right|=\left|\frac{a^\uparrow(t+s)+t^2a^\uparrow(s)+s^2a^\uparrow(t)+t^2s^2}{a^\uparrow(t+s)+(t+s)^2}\right|\\
&\overset{s=-t}{=}& \left|1+t^2a^\uparrow(t)+t^2a^\uparrow(-t)+t^4\right|.
\end{eqnarray*}
When $|a|>1,\ \tau>0,\ t>0$, we have
\begin{eqnarray*}
&&\left|1+t^2a^\uparrow(t)+t^2a^\uparrow(-t)+t^4\right|\geq t^2|a|^\uparrow(t)-\left(1+t^4+t^2|a|^\uparrow(-t)\right)\rightarrow +\infty \ (t\rightarrow +\infty).
\end{eqnarray*}
In other cases, we also have similar results. So we have $\xi(t)=(a^\uparrow(t)+t^2)^{-1}\notin \widetilde{\mathbb{U}}$.
\end{Example}

In the sequel, we will give a composition theorem on weighted pseudo $\emph{S}$-asymptotically $(\xi,\tau,a)$-periodic functions. First, let us denote the Nemytskil operator corresponding to the function $h\in C(\mathbb{R}\times \mathbb{F},\mathbb{E})$ by:
$$\overline{\Psi}_h:\ {\rm{PSAP}}_{\xi,\tau,a}(\mathbb{R},\mathbb{F},w)\rightarrow C(\mathbb{R},\mathbb{E}),\qquad \overline{\Psi}_h(\zeta)(\lambda)=h(\lambda,\zeta(\lambda)).$$
\begin{Theorem}\label{djdk}
Let $h\in C(\mathbb{R}\times \mathbb{F},\mathbb{E})$ satisfy the following conditions:\\
$(I_1)$ $(i)$ For any $y\in \mathbb{F}$, $h(\cdot,y)\in BC_{\xi}(\mathbb{R},\mathbb{E})$, and $\displaystyle{\sup_{ y\in \mathbb{Y}}\left\|h(\cdot,y)\right\|_{b\xi}<\infty}$;\\
$(ii)$ Given the limit $\displaystyle{\lim_{\ell\rightarrow \infty}\frac{1}{\sigma(\ell,w)}\int_{-\ell}^\ell \left\|\xi(\lambda)[h(\lambda+\tau,ay)-ah(\lambda,y)]\right\|w(\lambda)d\lambda}$, it converges to zero uniformly with respect to $y\in \mathbb{F}$.\\
$(I_2)$ There exists a positive constant $K$ such that
$$\|h(\lambda,y_1)-h(\lambda,y_2)\|\leq K\|y_1-y_2\|_{\mathbb{F}},\ \forall   y_i\in \mathbb{F}\ (i=1,2),\ \lambda\in\mathbb{R}.$$
Then for any $\zeta\in {\rm{PSAP}}_{\xi,\tau,a}(\mathbb{R},\mathbb{F},w)$, we have $\overline{\Psi}_h(\zeta)\in {\rm{PSAP}}_{\xi,\tau,a}(\mathbb{R},\mathbb{E},w)$.
\end{Theorem}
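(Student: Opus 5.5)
We need two things for $\phi(\lambda):=h(\lambda,\zeta(\lambda))$: first that $\phi(\cdot)$ and $\phi(\cdot+\tau)$ lie in $BC_{\xi}(\mathbb{R},\mathbb{E})$, and second that the weighted mean of $\|\xi(\lambda)[\phi(\lambda+\tau)-a\phi(\lambda)]\|$ tends to zero. Continuity of $\phi$ is clear from the continuity of $h$ and $\zeta$.

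For membership in $BC_\xi$, the main tool is $(I_1)(i)$. For each fixed $\lambda$ it gives $\|\xi(\lambda)h(\lambda,y)\|\le \sup_{y}\|h(\cdot,y)\|_{b\xi}=:L<\infty$, uniformly in $y\in\mathbb{F}$. Taking $y=\zeta(\lambda)$ yields $\|\xi(\lambda)\phi(\lambda)\|\le L$ for all $\lambda$. For $\phi(\cdot+\tau)$ we need $\sup_\lambda\|\xi(\lambda)h(\lambda+\tau,\zeta(\lambda+\tau))\|<\infty$. I would write this as $|\xi(\lambda)/\xi(\lambda+\tau)|\cdot\|\xi(\lambda+\tau)h(\lambda+\tau,\zeta(\lambda+\tau))\|$ and bound it by $CL$, using $|\xi_\tau|\sim|\xi|$. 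That hypothesis is not listed in the theorem, so I would either add it or observe that it is implicitly assumed, as in the previous results.

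The core step is an add-and-subtract at the level of the second argument:
\begin{eqnarray*}
\phi(\lambda+\tau)-a\phi(\lambda)&=&\big[h(\lambda+\tau,\zeta(\lambda+\tau))-h(\lambda+\tau,a\zeta(\lambda))\big]\\
&&+\big[h(\lambda+\tau,a\zeta(\lambda))-ah(\lambda,\zeta(\lambda))\big].
\end{eqnarray*}
By $(I_2)$, the first bracket is bounded in norm by $K\|\zeta(\lambda+\tau)-a\zeta(\lambda)\|_{\mathbb{F}}$. Multiplying by $|\xi(\lambda)|w(\lambda)$ and averaging over $[-\ell,\ell]$, its contribution is at most $K$ times the weighted mean defining $\zeta\in{\rm PSAP}_{\xi,\tau,a}(\mathbb{R},\mathbb{F},w)$, which tends to $0$.

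The second bracket is the main obstacle. Condition $(I_1)(ii)$ gives uniform convergence in $y$ only for a \emph{fixed} $y$, whereas here $y=\zeta(\lambda)$ varies with $\lambda$. My first approach is to exploit the Lipschitz condition together with compactness, as in the classical composition theorems for pseudo almost periodic functions. Put $g(\lambda,y):=\xi(\lambda)[h(\lambda+\tau,ay)-ah(\lambda,y)]$. By $(I_2)$, $\|g(\lambda,y_1)-g(\lambda,y_2)\|\le |\xi(\lambda)|(K|a|+|a|K)\|y_1-y_2\|$. If $\zeta(\lambda)$ is replaced by a piecewise constant approximation with finitely many values $y_1,\dots,y_m$ (an $\varepsilon$-net of the relevant range), then each piece is controlled by $(I_1)(ii)$ uniformly, and the error is $2K|a|\,|\xi(\lambda)|\,\|\zeta(\lambda)-y_{j}\|$.

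The difficulty is that $\zeta$ is only $\xi$-bounded, so $|\xi(\lambda)|\|\zeta(\lambda)-y_j\|$ is not small unless the net is taken in the scale of $\xi$. I would therefore try to pass to $\zeta_1=\xi\zeta\in BC$ and use nets for the range of $\zeta_1$. This requires $(I_1)(ii)$ to interact well with rescaling, which it may not. If this route does not close, I would fall back on reading $(I_1)(ii)$ as uniform over $y$ in the sense $\sup_{y}$ inside the integral, i.e. $\frac1{\sigma(\ell,w)}\int_{-\ell}^\ell\sup_{y}\|g(\lambda,y)\|w\,d\lambda\to0$. That reading is what the paper's uniformity is most likely meant to supply, and under it the second term is bounded directly by the substitution $y=\zeta(\lambda)$.

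Summing the two contributions then gives $\phi\in{\rm PSAP}_{\xi,\tau,a}(\mathbb{R},\mathbb{E},w)$.
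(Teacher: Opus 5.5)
Your decomposition is essentially the paper's, with a different intermediate point. The paper inserts $a\,h(\lambda,\tfrac{1}{a}\zeta(\lambda+\tau))$, handles $\mathbb{J}_1$ by $(I_1)(ii)$ at $y=\tfrac{1}{a}\zeta(\lambda+\tau)$ and $\mathbb{J}_2$ by $(I_2)$ at time $\lambda$. You insert $h(\lambda+\tau,a\zeta(\lambda))$ and use $(I_2)$ at time $\lambda+\tau$; the two versions are interchangeable. Your remark that $\phi(\cdot+\tau)\in BC_\xi(\mathbb{R},\mathbb{E})$ needs an extra hypothesis such as $\sup_\lambda|\xi(\lambda)/\xi(\lambda+\tau)|<\infty$ is correct, and the paper never addresses it (it only asserts $\overline{\Psi}_h(\zeta)\in BC_\xi$). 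The hypothesis is genuinely needed. Take $\tau=a=1$, $w\equiv1$, $\zeta=0$, $h(\lambda,y)=1/\xi(\lambda)$, and $\xi=1+\sum_{n\ge1}n\,b_n$, where $b_n$ is a tent of height $1$ and half-width $n^{-3}$ centred at $n^2$. Then $|\xi(\lambda)/\xi(\lambda+1)-1|$ is integrable on $\mathbb{R}$, so all stated hypotheses hold, yet $\xi(n^2)\phi(n^2+1)=1+n$.

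The obstacle you isolate in the second bracket is real, and it is exactly the step the paper's proof passes over. The paper concludes $\mathbb{J}_1\to0$ ``from $(I_1)(ii)$'' although $y=\tfrac{1}{a}\zeta(\lambda+\tau)$ varies with $\lambda$. Under the literal reading (for each fixed $y$ the weighted mean tends to $0$, uniformly in $y$) this cannot be repaired, because the theorem then fails when $\dim\mathbb{F}=\infty$. Here is a counterexample.
\begin{itemize}
\item[(a)] Set $\xi\equiv1$, $a=\tau=1$, $w\equiv1$, $\mathbb{E}=\mathbb{C}$, and let $\mathbb{F}$ be a Hilbert space with orthonormal sequence $(e_n)_{n\ge1}$.
\item[(b)] Let $\zeta$ be piecewise linear with $\zeta(n^2)=e_n$ and $\zeta\equiv e_1$ on $(-\infty,1]$. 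Then $\|\zeta\|\le1$ and $\|\zeta(\lambda+1)-\zeta(\lambda)\|\to0$ as $|\lambda|\to\infty$.
\item[(c)] Let $\beta(r)=\max\{0,1-4r\}$ and
\[
h(\lambda,y)=\sin(\pi\lambda)\sum_{n\ge1}\chi_n(\lambda)\,\beta(\|y-e_n\|),
\]
where $\chi_n\in C(\mathbb{R},[0,1])$ equals $1$ on $[n^2-\tfrac{n}{2},n^2+\tfrac{n}{2}]$ and vanishes off $[n^2-\tfrac{n}{2}-1,n^2+\tfrac{n}{2}+1]$.
\end{itemize}
The balls $B(e_n,\tfrac14)$ are pairwise disjoint, so $|h|\le1$ and $h$ is $4$-Lipschitz in $y$ uniformly in $\lambda$. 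If $\|y-e_n\|<\tfrac14$, then $h(\cdot+1,y)-h(\cdot,y)$ is supported in an interval of length $n+3$ contained in $[\tfrac{n^2}{2}-2,\infty)$; if $y$ lies in no ball, $h(\cdot,y)\equiv0$. Hence the mean over $[-\ell,\ell]$ is at most $(\sqrt{2\ell+4}+3)/\ell$ for every $y$, so $(I_1)$ and $(I_2)$ hold.

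But for $n\ge3$ and $|\lambda-n^2|\le\frac{2n-1}{8\sqrt{2}}$ we have $\|\zeta(\lambda)-e_n\|\le\tfrac18$ and $\chi_n(\lambda)=\chi_n(\lambda+1)=1$. Hence $|h(\lambda+1,\zeta(\lambda))-h(\lambda,\zeta(\lambda))|=2|\sin\pi\lambda|\,\beta(\|\zeta(\lambda)-e_n\|)\ge|\sin\pi\lambda|$. Summing over $3\le n\le\sqrt{\ell}-1$ keeps the mean of your second bracket above a positive constant for large $\ell$. Your first bracket is at most $4\|\zeta(\lambda+1)-\zeta(\lambda)\|\to0$, so $\overline{\Psi}_h(\zeta)\notin\mathrm{PSAP}_{\xi,\tau,a}(\mathbb{R},\mathbb{E},w)$.

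So your fallback is not a convenience but the necessary correction. Read $(I_1)(ii)$ as $\frac{1}{\sigma(\ell,w)}\int_{-\ell}^{\ell}\sup_{y}\|\xi(\lambda)[h(\lambda+\tau,ay)-ah(\lambda,y)]\|w(\lambda)\,d\lambda\to0$; the supremum over $y\in\zeta(\mathbb{R})$, or over $\tfrac{1}{a}\zeta(\mathbb{R})$ for the paper's split, suffices. With this reading and the ratio bound on $\xi$, your argument is complete, and so is the paper's. Your $\varepsilon$-net route does close when $\xi$ is bounded and $\zeta(\mathbb{R})$ is relatively compact; then even convergence for each fixed $y$ suffices and uniformity is unnecessary. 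The example shows that non-compactness of the range is precisely what defeats it.
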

\begin{proof}
By $(I_1)(i)$ we have $\overline{\Psi}_h(\zeta)\in BC_{\xi}(\mathbb{R},\mathbb{E})$. For any $\zeta\in {\rm{PSAP}}_{\xi,\tau,a}(\mathbb{R},\mathbb{F},w)$,
we get
$$\lim_{\ell\rightarrow \infty}\frac{1}{\sigma(\ell,w)}\int_{-\ell}^\ell \left\|\xi(\lambda)[\zeta(\lambda+\tau)-a\zeta(\lambda)]\right\|_{\mathbb{F}}w(\lambda)d\lambda=0.$$
And we have
\begin{eqnarray*}
&&\frac{1}{\sigma(\ell,w)}\int_{-\ell}^\ell \left\|\xi(\lambda)[\overline{\Psi}_h(\zeta)(\lambda+\tau)-a\overline{\Psi}_h(\zeta)(\lambda)]\right\|w(\lambda)d\lambda\\
&=&\frac{1}{\sigma(\ell,w)}\int_{-\ell}^\ell \Bigg\|\xi(\lambda)\Bigg[h(\lambda+\tau,\zeta(\lambda+\tau))-ah\left(\lambda,\frac{1}{a}\zeta(\lambda+\tau)\right)\\
&&+ah\left(\lambda,\frac{1}{a}\zeta(\lambda+\tau)\right)-ah(\lambda,\zeta(\lambda))\Bigg]\Bigg\|w(\lambda)d\lambda\\
&\leq& \frac{1}{\sigma(\ell,w)}\int_{-\ell}^\ell \Bigg\|\xi(\lambda)\Bigg[h(\lambda+\tau,\zeta(\lambda+\tau))-ah\left(\lambda,\frac{1}{a}\zeta(\lambda+\tau)\right)\Bigg]\Bigg\|w(\lambda)d\lambda\\
&&+\frac{1}{\sigma(\ell,w)}\int_{-\ell}^\ell \Bigg\|\xi(\lambda)\Bigg[ah\left(\lambda,\frac{1}{a}\zeta(\lambda+\tau)\right)-ah(\lambda,\zeta(\lambda))\Bigg]\Bigg\|w(\lambda)d\lambda\\
&:=&\mathbb{J}_1+\mathbb{J}_2.
\end{eqnarray*}
From $(I_1)(ii)$, we can see $\mathbb{J}_1\rightarrow 0$ as $\ell\rightarrow \infty$. For $\mathbb{J}_2$, by $(I_2)$ we have
\begin{eqnarray*}
&&\lim_{\ell\rightarrow \infty}\frac{1}{\sigma(\ell,w)}\int_{-\ell}^\ell \Bigg\|\xi(\lambda)\Bigg[ah\left(\lambda,\frac{1}{a}\zeta(\lambda+\tau)\right)-ah(\lambda,\zeta(\lambda))\Bigg]\Bigg\|w(\lambda)d\lambda\\
&\leq& \lim_{\ell\rightarrow \infty}\frac{1}{\sigma(\ell,w)}\int_{-\ell}^\ell \left|\xi(\lambda)\right||a|K\left\|\frac{1}{a}\zeta(\lambda+\tau)-\zeta(\lambda)\right\|_{\mathbb{F}}w(\lambda)d\lambda\\
&=& K\lim_{T\rightarrow \infty}\frac{1}{\sigma(\ell,w)}\int_{-\ell}^\ell \left|\xi(\lambda)\right|\left\|\zeta(\lambda+\tau)-a\zeta(\lambda)\right\|_{\mathbb{F}}w(\lambda)d\lambda=0.
\end{eqnarray*}
Thus, we get
 $$\lim_{\ell\rightarrow \infty}\frac{1}{\sigma(\ell,w)}\int_{-\ell}^\ell \left\|\xi(\lambda)[\overline{\Psi}_h(\zeta)(\lambda+\tau)-a\overline{\Psi}_h(\zeta)(\lambda)]\right\|w(\lambda)d\lambda=0,$$
 which is the desired result.
\end{proof}
 For any $h\in BC_{\xi}(\mathbb{R},\mathbb{E})$, $\ell,\ \mu>0$, $\Sigma_{\ell,\mu}(h)$
denotes the set $$\left\{\lambda\in [-\ell,\ell]:\ \|\xi(\lambda)[h(\lambda+\tau)-ah(\lambda)]\|\geq \mu\right\}.$$
\begin{Lemma}\label{jdkkd}
Assume $|\xi_r|\sim |\xi|$ for any $r\in \mathbb{R}$. $h\in {\rm{PSAP}}_{\xi,\tau,a}(\mathbb{R},\mathbb{E},w)$ if and only if
for any fixed $\mu>0$, $$\displaystyle{\lim_{\ell\rightarrow \infty}}\frac{1}{\sigma(\ell,w)}\int_{\Sigma_{\ell,\mu}(h)}w(\lambda)d\lambda=0.$$
\end{Lemma}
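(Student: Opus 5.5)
The argument follows the classical equivalence between ergodicity in mean and ``ergodicity in measure'' for pseudo almost periodic functions. The key ingredient is a uniform bound on the integrand. Set
$$g(\lambda):=\|\xi(\lambda)[h(\lambda+\tau)-ah(\lambda)]\|.$$
Since $h(\cdot),h(\cdot+\tau)\in BC_\xi(\mathbb{R},\mathbb{E})$ and $|\xi_\tau|\sim|\xi|$, there is $C>0$ with $|\xi(\lambda)/\xi(\lambda+\tau)|\le C$. Hence
$$g(\lambda)\le C\|\xi(\lambda+\tau)h(\lambda+\tau)\|+|a|\,\|\xi(\lambda)h(\lambda)\|\le C\|h\|_{b\xi}+|a|\|h\|_{b\xi}=:B<\infty$$
for all $\lambda\in\mathbb{R}$. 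The function $g$ is continuous, so $\Sigma_{\ell,\mu}(h)=\{\lambda\in[-\ell,\ell]: g(\lambda)\ge\mu\}$ is closed and hence measurable. With this bound, both directions become short Chebyshev-type estimates.

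(Necessity.) Let $h\in{\rm PSAP}_{\xi,\tau,a}(\mathbb{R},\mathbb{E},w)$ and fix $\mu>0$. On $\Sigma_{\ell,\mu}(h)$ we have $g\ge\mu$, so
$$\frac{1}{\sigma(\ell,w)}\int_{\Sigma_{\ell,\mu}(h)}w(\lambda)d\lambda\le \frac{1}{\mu}\cdot\frac{1}{\sigma(\ell,w)}\int_{-\ell}^{\ell}g(\lambda)w(\lambda)d\lambda.$$
The right-hand side tends to $0$ as $\ell\to\infty$ by Definition \ref{hddj1}. This direction does not need the bound $B$.

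(Sufficiency.) Assume the measure condition holds for every $\mu>0$, and let $\varepsilon>0$. Choose $\mu=\varepsilon/2$ and split $[-\ell,\ell]$ into $\Sigma_{\ell,\mu}(h)$ and its complement. On the complement $g<\mu$, and $\sigma(\ell,w)$ normalizes the total weight, so that piece contributes at most $\mu$. On $\Sigma_{\ell,\mu}(h)$ we use $g\le B$. Together,
$$\frac{1}{\sigma(\ell,w)}\int_{-\ell}^{\ell}g\,w\,d\lambda\le B\,\frac{1}{\sigma(\ell,w)}\int_{\Sigma_{\ell,\mu}(h)}w\,d\lambda+\frac{\varepsilon}{2}.$$
By hypothesis the first term is $<\varepsilon/2$ for $\ell$ large. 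Hence the weighted mean of $g$ tends to $0$, and $h\in{\rm PSAP}_{\xi,\tau,a}(\mathbb{R},\mathbb{E},w)$. (The membership $h,h(\cdot+\tau)\in BC_\xi$ is part of the standing setting of the statement, since $\Sigma_{\ell,\mu}(h)$ is only defined for $h\in BC_\xi$.)

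The only genuine obstacle is establishing the uniform bound $B$ on $g$ in the sufficiency direction. This is exactly where the assumption $|\xi_r|\sim|\xi|$ (with $r=\tau$) enters. Without it, $\xi(\lambda)h(\lambda+\tau)$ need not be bounded, and the contribution from $\Sigma_{\ell,\mu}(h)$ could not be controlled by its weighted measure. Everything else is routine bookkeeping with $\sigma(\ell,w)$.
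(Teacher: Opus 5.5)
Your proof is correct and is essentially the paper's: necessity via the Chebyshev-type bound $\frac{\mu}{\sigma(\ell,w)}\int_{\Sigma_{\ell,\mu}(h)}w\,d\lambda\le\frac{1}{\sigma(\ell,w)}\int_{-\ell}^{\ell}g\,w\,d\lambda$, and sufficiency by splitting $[-\ell,\ell]$ into $\Sigma_{\ell,\mu}(h)$ and its complement, with $g<\mu$ on the complement and $g\le (C+|a|)\|h\|_{b\xi}$ (from $|\xi_\tau|\sim|\xi|$) on $\Sigma_{\ell,\mu}(h)$. One small remark: if $h(\cdot+\tau)\in BC_\xi(\mathbb{R},\mathbb{E})$ is taken as given, then $g\le \|h(\cdot+\tau)\|_{b\xi}+|a|\|h\|_{b\xi}$ already holds without any condition on $\xi$, so the hypothesis $|\xi_\tau|\sim|\xi|$ is really what lets you get $h(\cdot+\tau)\in BC_\xi$ from $h\in BC_\xi$ alone.
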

\begin{proof}
(Necessity) From
\begin{eqnarray*}
&&\frac{1}{\sigma(\ell,w)}\int_{-\ell}^\ell \left\|\xi(\lambda)[h(\lambda+\tau)-ah(\lambda)]\right\|w(\lambda)d\lambda\\
&=& \frac{1}{\sigma(\ell,w)}\int_{[-\ell,\ell]\backslash \Sigma_{\ell,\mu}(h)} \left\|\xi(\lambda)[h(\lambda+\tau)-ah(\lambda)]\right\|w(\lambda)d\lambda\\
&&+\frac{1}{\sigma(\ell,w)}\int_{ \Sigma_{\ell,\mu}(h)} \left\|\xi(\lambda)[h(\lambda+\tau)-ah(\lambda)]\right\|w(\lambda)d\lambda\\
&\geq&\frac{1}{\sigma(\ell,w)}\int_{ \Sigma_{\ell,\mu}(h)} \left\|\xi(\lambda)[h(\lambda+\tau)-ah(\lambda)]\right\|w(\lambda)d\lambda\\
&\geq&\frac{\mu}{\sigma(\ell,w)}\int_{ \Sigma_{\ell,\mu}(h)} w(\lambda)d\lambda,
\end{eqnarray*}
we can see that necessity is established.

(Sufficiency)
Because $|\xi_\tau|\sim |\xi|$, there exists $C>0$ such that $\left|\frac{\xi(\lambda)}{\xi(\lambda+\tau)}\right|\leq C,\ \lambda\in \mathbb{R}$.

Since
\begin{eqnarray*}
&&\frac{1}{\sigma(\ell,w)}\int_{-\ell}^\ell \left\|\xi(\lambda)[h(\lambda+\tau)-ah(\lambda)]\right\|w(\lambda)d\lambda\\
&=& \frac{1}{\sigma(\ell,w)}\int_{[-\ell,\ell]\backslash \Sigma_{\ell,\mu}(h)} \left\|\xi(\lambda)[h(\lambda+\tau)-ah(\lambda)]\right\|w(\lambda)d\lambda\\
&&+\frac{1}{\sigma(\ell,w)}\int_{ \Sigma_{\ell,\mu}(h)} \left\|\xi(\lambda)[h(\lambda+\tau)-ah(\lambda)]\right\|w(\lambda)d\lambda\\
&\leq&\left(1-\frac{1}{\sigma(\ell,w)}\int_{\Sigma_{\ell,\mu}(h)}w(\lambda)d\lambda\right)\mu+\frac{C+|a|}{\sigma(\ell,w)}\|h\|_{b\xi}\int_{\Sigma_{\ell,\mu}(h)}w(\lambda)d\lambda,
\end{eqnarray*}
we can verify the correctness of sufficiency.

The proof of this lemma is thus finished.
\end{proof}

Next we present some other composition results for functions $h\in {\rm{PSAP}}_{\xi,\tau,a}(\mathbb{R},\mathbb{E},w)$ by using Lemma \ref{jdkkd}.
For any bounded subset $\Theta$ in $\mathbb{F}$, a function $h(\lambda,y)\in C(\mathbb{R}\times \mathbb{F},\mathbb{E})$ is said to
be uniformly continuous in $\Theta$ uniformly with respect to $\lambda$
if for any $\mu>0$, there exists a $\mathbbm{d}>0$ such that
for any $y_i\in \Theta\ (i=1,2)$ satisfying $\|y_1-y_2\|_{\mathbb{F}}<\mathbbm{d}$, we have
$$\|h(\lambda,y_1)-h(\lambda,y_2)\|\leq \mu,\  \lambda\in \mathbb{R}.$$
Let
\begin{equation}\label{bdjkdb}
\mathcal{H}(\lambda,y):=\xi(\lambda)h\left(\lambda,\frac{1}{\xi(\lambda)}y\right).
\end{equation}
\begin{Theorem}\label{dbdnj}
Let $h\in C(\mathbb{R}\times \mathbb{F},\mathbb{E})$ satisfy $(I_1)$ and the following condition:\\
$(I_2^{\prime})$ For any bounded subset $\Theta$ in $\mathbb{F}$, the function $\mathcal{H}(\lambda,y)$ is uniformly continuous in $\Theta$ uniformly with respect to $\lambda$.
Then for any $\zeta\in {\rm{PSAP}}_{\xi,\tau,a}(\mathbb{R},\mathbb{F},w)$, we have $\overline{\Psi}_h(\zeta)\in {\rm{PSAP}}_{\xi,\tau,a}(\mathbb{R},\mathbb{E},w)$.
\end{Theorem}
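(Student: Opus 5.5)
The plan is to use Lemma \ref{jdkkd}: it suffices to show that for every fixed $\mu>0$,
$$\lim_{\ell\to\infty}\frac{1}{\sigma(\ell,w)}\int_{\Sigma_{\ell,\mu}(\overline{\Psi}_h(\zeta))}w(\lambda)\,d\lambda=0 .$$
Membership $\overline{\Psi}_h(\zeta)\in BC_{\xi}(\mathbb{R},\mathbb{E})$ comes from $(I_1)(i)$, exactly as in the proof of Theorem \ref{djdk}. Throughout, I assume $|\xi_r|\sim|\xi|$, so that Lemma \ref{jdkkd} applies to both $\zeta$ and $\overline{\Psi}_h(\zeta)$; if needed, this hypothesis is implicit in the statement.

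First I would use the same splitting as in Theorem \ref{djdk}:
$$\xi(\lambda)\big[h(\lambda+\tau,\zeta(\lambda+\tau))-ah(\lambda,\zeta(\lambda))\big]=A(\lambda)+B(\lambda),$$
with
$$A(\lambda)=\xi(\lambda)\Big[h(\lambda+\tau,\zeta(\lambda+\tau))-ah\big(\lambda,\tfrac1a\zeta(\lambda+\tau)\big)\Big],\qquad B(\lambda)=a\,\xi(\lambda)\Big[h\big(\lambda,\tfrac1a\zeta(\lambda+\tau)\big)-h(\lambda,\zeta(\lambda))\Big].$$
The term $A$ is handled by $(I_1)(ii)$ with $y=\frac1a\zeta(\lambda+\tau)$. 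Because that convergence is uniform in $y$, the weighted mean of $\|A\|$ tends to $0$. By the Chebyshev-type inequality from the necessity part of Lemma \ref{jdkkd}, the weighted measure of $\{\lambda\in[-\ell,\ell]:\|A(\lambda)\|\ge\mu/2\}$, divided by $\sigma(\ell,w)$, then tends to $0$.

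The main step is $B$. I would rewrite it with $\mathcal{H}$ from (\ref{bdjkdb}):
$$B(\lambda)=a\big[\mathcal{H}(\lambda,y_1(\lambda))-\mathcal{H}(\lambda,y_2(\lambda))\big],\qquad y_1=\tfrac{\xi(\lambda)}{a}\zeta(\lambda+\tau),\quad y_2=\xi(\lambda)\zeta(\lambda).$$
Both $y_1$ and $y_2$ lie in a bounded set $\Theta\subset\mathbb{F}$. Indeed, $\|\xi\zeta\|_\infty<\infty$, and $|\xi(\lambda)/\xi(\lambda+\tau)|\le C$ gives $\|\xi(\lambda)\zeta(\lambda+\tau)\|\le C\|\zeta\|_{b\xi}$. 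By $(I_2')$ there is $\mathbbm{d}>0$ such that $\|y_1-y_2\|<\mathbbm{d}$ implies $\|B(\lambda)\|<\mu/2$. Moreover,
$$\|y_1-y_2\|=|a|^{-1}\,\|\xi(\lambda)[\zeta(\lambda+\tau)-a\zeta(\lambda)]\|.$$
Hence $\{\lambda\in[-\ell,\ell]:\|B(\lambda)\|\ge\mu/2\}\subseteq\Sigma_{\ell,|a|\mathbbm{d}}(\zeta)$, and the normalized weighted measure of the latter tends to $0$ by Lemma \ref{jdkkd} applied to $\zeta$.

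Finally, $\Sigma_{\ell,\mu}(\overline{\Psi}_h(\zeta))$ is contained in the union of the two exceptional sets above. So its normalized $w$-measure tends to $0$, and the sufficiency part of Lemma \ref{jdkkd} gives $\overline{\Psi}_h(\zeta)\in{\rm PSAP}_{\xi,\tau,a}(\mathbb{R},\mathbb{E},w)$. The step I expect to be the obstacle is the bookkeeping in $B$: making sure the points $y_1,y_2$ fed to $\mathcal{H}$ stay in one bounded set, which relies on $|\xi_\tau|\sim|\xi|$. Once that bound holds, the uniform continuity in $(I_2')$ replaces the Lipschitz condition $(I_2)$ of Theorem \ref{djdk}.
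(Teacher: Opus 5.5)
The step that does not go through is your treatment of $A$. It is the same step the paper takes when it says $(I_1)(ii)$ gives $\mathbb{J}_1\to 0$.

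Put $F(\lambda,y)=\|\xi(\lambda)[h(\lambda+\tau,ay)-ah(\lambda,y)]\|$. Uniformity in $(I_1)(ii)$ controls $\sup_{y}\frac{1}{\sigma(\ell,w)}\int_{-\ell}^{\ell}F(\lambda,y)w(\lambda)\,d\lambda$ with $y$ frozen. But $\|A(\lambda)\|=F(\lambda,y(\lambda))$ with $y(\lambda)=\frac1a\zeta(\lambda+\tau)$ moving with $\lambda$. Its mean is only dominated by $\frac{1}{\sigma(\ell,w)}\int_{-\ell}^{\ell}\sup_y F(\lambda,y)\,w(\lambda)\,d\lambda$, with the supremum \emph{inside} the integral. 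The usual bridge, an $\varepsilon$-net plus $(I_2')$, needs a precompact range of $\zeta(\cdot+\tau)$, which is not assumed.

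With $(I_1)(ii)$ read this way, the statement is in fact false. Take $\xi=w\equiv 1$, $a=\tau=1$, $\mathbb{F}=\ell^2$ with unit vectors $e_n$, and $\mathbb{E}=\mathbb{C}$. Set $T_1=0$, $T_{n+1}=T_n+n$, $I_n=[T_n,T_{n+1}]$, and let $\beta_n(\lambda)=\sin(\pi(\lambda-T_n))$ on $I_n$ and $0$ elsewhere. Define $h(\lambda,y)=\sum_n\beta_n(\lambda)\max\{0,1-2\|y-e_n\|\}$. Let $\zeta$ equal $e_1$ on $(-\infty,0]$, and $e_n$ on $I_n$ except for a linear passage to $e_{n+1}$ over the last $\sqrt n$ units of $I_n$.
\begin{itemize}
\item $h$ is continuous, bounded by $1$ and $2$-Lipschitz in $y$, so $(I_1)(i)$, $(I_2)$ and $(I_2')$ hold.
\item For fixed $y$, at most one term of the sum is nonzero. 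So $h(\cdot+1,y)-h(\cdot,y)$ is supported in one interval $[T_{n_0}-1,T_{n_0+1}]$, of length $O(\sqrt\ell)$ if it meets $[-\ell,\ell]$. Hence $(I_1)(ii)$ holds uniformly in $y$.
\item $\|\zeta(\lambda+1)-\zeta(\lambda)\|\to 0$, so $\zeta$ is admissible.
\end{itemize}
Yet on $[T_n,T_{n+1}-\sqrt n-1]$ we have $\zeta(\lambda)=\zeta(\lambda+1)=e_n$. There $|\overline{\Psi}_h(\zeta)(\lambda+1)-\overline{\Psi}_h(\zeta)(\lambda)|=2|\sin(\pi(\lambda-T_n))|$, and its mean over $[-\ell,\ell]$ tends to $2/\pi$. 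So $(I_1)(ii)$ must be strengthened, e.g.\ to $\frac{1}{\sigma(\ell,w)}\int_{-\ell}^{\ell}\sup_{y\in\mathbb{F}}F(\lambda,y)\,w(\lambda)\,d\lambda\to 0$. Under that condition your $A$-step is immediate.

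Granting that, the rest of your argument is correct.
\begin{itemize}
\item Your treatment of $B$ is the paper's: the same rewriting through $\mathcal{H}$, the same inclusion of the bad set in $\Sigma_{\ell,|a|\mathbbm{d}}(\zeta)$, and the same Chebyshev half of Lemma~\ref{jdkkd} for $\zeta$.
\item You are more careful than the paper in requiring $\Theta$ to contain both $y_1$ and $y_2$. The paper's $\Theta=\{\xi(\lambda)\zeta(\lambda)\}$ need not contain $\frac{\xi(\lambda)}{a}\zeta(\lambda+\tau)$.
\item You do not need $|\xi_\tau|\sim|\xi|$ for bounding $\Theta$. The condition $\zeta(\cdot+\tau)\in BC_\xi(\mathbb{R},\mathbb{F})$ is already part of the definition of ${\rm PSAP}_{\xi,\tau,a}$.
\end{itemize}

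The real difference is the assembly. You convert $A$ into a level set and finish with the sufficiency half of Lemma~\ref{jdkkd} for $\overline{\Psi}_h(\zeta)$. That needs a uniform bound on the whole integrand, which is what forces your extra hypothesis $|\xi_\tau|\sim|\xi|$. The paper simply adds the two means. The mean of $\|A\|$ tends to $0$. Off $\Sigma_{\ell,|a|\mathbbm{d}}(\zeta)$ one has $\|B(\lambda)\|\le |a|\mu$, and on it $\|B(\lambda)\|\le 2|a|C$ with $C=\sup_{\lambda,y}\|\xi(\lambda)h(\lambda,y)\|$ from $(I_1)(i)$. So no bound on $A$ is used there.

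Your extra hypothesis is still sensible. Membership in ${\rm PSAP}$ also requires $\xi(\cdot)\overline{\Psi}_h(\zeta)(\cdot+\tau)$ to be bounded. $(I_1)(i)$ alone does not give this, and the paper does not address it.
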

\begin{proof}
It follows from $(I_1)(i)$ that $\overline{\Psi}_h(\zeta)\in BC_{\xi}(\mathbb{R},\mathbb{E})$, and there exists a positive constant $C$ such that
$\displaystyle{\sup_{\lambda\in \mathbb{R}}\left\|\xi(\lambda)h(\lambda,y)\right\|\leq C}, \ \forall y\in \mathbb{F}$. On the other hand, we have
\begin{eqnarray*}
&&\frac{1}{\sigma(\ell,w)}\int_{-\ell}^\ell \left\|\xi(\lambda)[\overline{\Psi}_h(\zeta)(\lambda+\tau)-a\overline{\Psi}_h(\zeta)(\lambda)]\right\|w(\lambda)d\lambda\\
&\leq& \frac{1}{\sigma(\ell,w)}\int_{-\ell}^\ell \Bigg\|\xi(\lambda)\Bigg[h(\lambda+\tau,\zeta(\lambda+\tau))-ah\left(\lambda,\frac{1}{a}\zeta(\lambda+\tau)\right)\Bigg]\Bigg\|w(\lambda)d\lambda\\
&&+\frac{1}{\sigma(\ell,w)}\int_{-\ell}^\ell \Bigg\|\xi(\lambda)\Bigg[ah\left(\lambda,\frac{1}{a}\zeta(\lambda+\tau)\right)-ah(\lambda,\zeta(\lambda))\Bigg]\Bigg\|w(\lambda)d\lambda\\
&:=&\mathbb{J}_1+\mathbb{J}_2.
\end{eqnarray*}
$(I_1)(ii)$ implies that $\mathbb{J}_1\rightarrow 0$ as $\ell\rightarrow \infty$. Next, let us consider the bounded subset $$\Theta:=\{\xi(\lambda)\zeta(\lambda):\ \lambda\in \mathbb{R}\}$$
in $\mathbb{F}$.  For any given $\mu>0$, denote by $\mathbbm{d}$ the positive constant determined by the uniform continuity of the function $\mathcal{H}(\lambda,y)$ from $(I_2^{\prime})$ on $\Theta,$ which depends on $\mu$ but is independent of $\lambda$. Note that when $$\left\|\xi(\lambda)[\zeta(\lambda+\tau)-a\zeta(\lambda)]\right\|_{\mathbb{F}}<|a|\mathbbm{d},$$ we have
$$\left\|\frac{\xi(\lambda)}{a}\zeta(\lambda+\tau)-\xi(\lambda)\zeta(\lambda)\right\|_{\mathbb{F}}=\left\|\frac{1}{a}\xi(\lambda)[\zeta(\lambda+\tau)-a\zeta(\lambda)]\right\|_{\mathbb{F}}<\mathbbm{d},$$
which leads to
$$\left\|\xi(\lambda)\left[h\left(\lambda,\frac{1}{\xi(\lambda)}\frac{\xi(\lambda)}{a}\zeta(\lambda+\tau)\right)-h\left(\lambda,\frac{1}{\xi(\lambda)}\xi(\lambda)\zeta(\lambda)\right)\right]\right\|\leq\mu.$$
Hence, it follows that
\begin{eqnarray}
&&\frac{1}{\sigma(\ell,w)}\int_{-\ell}^\ell \Bigg\|\xi(\lambda)\Bigg[ah\left(\lambda,\frac{1}{a}\zeta(\lambda+\tau)\right)-ah(\lambda,\zeta(\lambda))\Bigg]\Bigg\|w(\lambda)d\lambda\nonumber\\
&=&\frac{|a|}{\sigma(\ell,w)}\int_{-\ell}^\ell\left\|\xi(\lambda)\left[h\left(\lambda,\frac{1}{\xi(\lambda)}\frac{\xi(\lambda)}{a}\zeta(\lambda+\tau)\right)
-h\left(\lambda,\frac{1}{\xi(\lambda)}\xi(\lambda)\zeta(\lambda)\right)\right]
\right\| w(\lambda)d\lambda\nonumber\\
&=&\frac{|a|}{\sigma(\ell,w)}\int_{[-\ell,\ell]\backslash\Sigma_{\ell,|a|\mathbbm{d}}(\zeta)}\left\|\xi(\lambda)\left[h\left(\lambda,\frac{1}{\xi(\lambda)}
\frac{\xi(\lambda)}{a}\zeta(\lambda+\tau)\right)-h\left(\lambda,\frac{1}{\xi(\lambda)}\xi(\lambda)\zeta(\lambda)\right)\right]\right\| w(\lambda)d\lambda\nonumber\\
&&+\frac{|a|}{\sigma(\ell,w)}\int_{\Sigma_{\ell,|a|\mathbbm{d}}(\zeta)}\left\|\xi(\lambda)\left[h\left(\lambda,\frac{1}{\xi(\lambda)}\frac{\xi(\lambda)}{a}
\zeta(\lambda+\tau)\right)-h\left(\lambda,\frac{1}{\xi(\lambda)}\xi(\lambda)\zeta(\lambda)\right)\right]
\right\|w(\lambda)d\lambda\nonumber\\
&\leq&\left(1-\frac{1}{\sigma(\ell,w)}\int_{\Sigma_{\ell,|a|\mathbbm{d}}(\zeta)}w(\lambda)d\lambda\right)|a|\mu
+2|a|C\frac{1}{\sigma(\ell,w)}\int_{\Sigma_{\ell,|a|\mathbbm{d}}(\zeta)}w(\lambda)d\lambda.\label{dhdjk}
\end{eqnarray}
From Lemma \ref{jdkkd}, the condition $\zeta\in {\rm{PSAP}}_{\xi,\tau,a}(\mathbb{R},\mathbb{F},w)$, and the fact that $\mu$ is arbitrary, we can infer from $(\ref{dhdjk})$ that
$\mathbb{J}_2$ also tends to $0$ as $\ell\rightarrow \infty$. Therefore, we obtain
$$\lim_{\ell\rightarrow \infty}\frac{1}{\sigma(\ell,w)}\int_{-\ell}^\ell \left\|\xi(\lambda)[\overline{\Psi}_h(\zeta)(\lambda+\tau)-a\overline{\Psi}_h(\zeta)(\lambda)]\right\|w(\lambda)d\lambda=0,$$
that is, $\overline{\Psi}_h(\zeta)\in {\rm{PSAP}}_{\xi,\tau,a}(\mathbb{R},\mathbb{E},w)$. The proof of this theorem is complete.
\end{proof}

\begin{Theorem}\label{dndk}
Let $h\in C(\mathbb{R}\times \mathbb{F},\mathbb{E})$ satisfy $(I_1)$ and the following condition:\\
$(I_2^{\prime\prime})$ There exists a function $G(\cdot)\in \mathcal{B}_w$ satisfying that for every $\mu>0$, there exists a $\mathbbm{d}>0$ such that
for all $y_i\in \mathbb{F}\ (i=1,2)$ satisfying $\|y_1-y_2\|_{\mathbb{F}}<\mathbbm{d}$, we have
$$\|\mathcal{H}(\lambda,y_1)-\mathcal{H}(\lambda,y_2)\|\leq G(\lambda)\mu,\ \lambda\in \mathbb{R},$$
where $\mathcal{H}$ is defined by formula $(\ref{bdjkdb})$. Then for any $\zeta\in {\rm{PSAP}}_{\xi,\tau,a}(\mathbb{R},\mathbb{F},w)$, we have $\overline{\Psi}_h(\zeta)\in {\rm{PSAP}}_{\xi,\tau,a}(\mathbb{R},\mathbb{E},w)$.
\end{Theorem}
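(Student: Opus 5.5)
The proof will follow the pattern of Theorem \ref{dbdnj}, replacing the uniform continuity estimate with the weighted estimate supplied by $(I_2^{\prime\prime})$.

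First, $(I_1)(i)$ gives $\overline{\Psi}_h(\zeta)\in BC_{\xi}(\mathbb{R},\mathbb{E})$. It also gives a constant $C$ with $\sup_{\lambda}\|\xi(\lambda)h(\lambda,y)\|\le C$ for all $y\in\mathbb{F}$. As before, I split
$$\frac{1}{\sigma(\ell,w)}\int_{-\ell}^\ell \left\|\xi(\lambda)[\overline{\Psi}_h(\zeta)(\lambda+\tau)-a\overline{\Psi}_h(\zeta)(\lambda)]\right\|w(\lambda)d\lambda\le \mathbb{J}_1+\mathbb{J}_2$$
by inserting the term $ah(\lambda,\frac1a\zeta(\lambda+\tau))$. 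Condition $(I_1)(ii)$ gives $\mathbb{J}_1\to0$ as $\ell\to\infty$.

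For $\mathbb{J}_2$, fix $\mu>0$ and let $\mathbbm{d}$ be the constant from $(I_2^{\prime\prime})$. Put $y_1=\frac{\xi(\lambda)}{a}\zeta(\lambda+\tau)$ and $y_2=\xi(\lambda)\zeta(\lambda)$. Then
$$|a|\,\|\xi(\lambda)[h(\lambda,\tfrac1a\zeta(\lambda+\tau))-h(\lambda,\zeta(\lambda))]\|=|a|\,\|\mathcal{H}(\lambda,y_1)-\mathcal{H}(\lambda,y_2)\|,$$
and $\|y_1-y_2\|_{\mathbb{F}}=\frac1{|a|}\|\xi(\lambda)[\zeta(\lambda+\tau)-a\zeta(\lambda)]\|_{\mathbb{F}}$. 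So off the set $\Sigma_{\ell,|a|\mathbbm{d}}(\zeta)$ the integrand is at most $|a|G(\lambda)\mu$. On $\Sigma_{\ell,|a|\mathbbm{d}}(\zeta)$ it is at most $2|a|C$. This yields
$$\mathbb{J}_2\le \frac{|a|\mu}{\sigma(\ell,w)}\int_{-\ell}^\ell G(\lambda)w(\lambda)d\lambda+\frac{2|a|C}{\sigma(\ell,w)}\int_{\Sigma_{\ell,|a|\mathbbm{d}}(\zeta)}w(\lambda)d\lambda.$$

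Since $G\in\mathcal{B}_w$, the first average has a finite limit, so it is bounded by some $L<\infty$ for all large $\ell$. By Lemma \ref{jdkkd}, applied to $\zeta$ (which requires assuming $|\xi_r|\sim|\xi|$, as in Theorem \ref{dbdnj}), the second term tends to $0$. Hence $\limsup_{\ell\to\infty}\mathbb{J}_2\le|a|L\mu$, and letting $\mu\to0$ gives $\mathbb{J}_2\to0$.

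The only delicate point is the order of limits. The bound $L$ must not depend on $\mu$, which holds because $G$ is fixed while only $\mathbbm{d}$ depends on $\mu$. Unlike Theorem \ref{dbdnj}, there is no bounded set $\Theta$ to control, because $(I_2^{\prime\prime})$ is global in $y$.
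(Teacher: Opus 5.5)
Your argument follows the paper's proof essentially line by line: the same inserted term $ah(\lambda,\frac1a\zeta(\lambda+\tau))$, the same choice $y_1=\frac{\xi(\lambda)}{a}\zeta(\lambda+\tau)$, $y_2=\xi(\lambda)\zeta(\lambda)$, and the same split over $\Sigma_{\ell,|a|\mathbbm{d}}(\zeta)$ with bounds $|a|G(\lambda)\mu$ and $2|a|C$. Your handling of the order of limits is cleaner than the paper's. I have two remarks; the second is a genuine gap that you share with the paper. First, you do not need to add $|\xi_r|\sim|\xi|$. Only the necessity half of Lemma \ref{jdkkd} is used, and that half is just the Chebyshev bound
\[
\frac{|a|\mathbbm{d}}{\sigma(\ell,w)}\int_{\Sigma_{\ell,|a|\mathbbm{d}}(\zeta)}w(\lambda)\,d\lambda\le\frac{1}{\sigma(\ell,w)}\int_{-\ell}^{\ell}\left\|\xi(\lambda)[\zeta(\lambda+\tau)-a\zeta(\lambda)]\right\|_{\mathbb{F}}w(\lambda)\,d\lambda,
\]
which holds for every $\xi$. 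Theorem \ref{dbdnj} does not assume this condition either, and the Remark after Corollary \ref{vnwkejj} says none is intended.

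Second, and more seriously, $\mathbb{J}_1\to0$ does not follow from $(I_1)(ii)$; the paper only writes ``Clearly''. Put $F(\lambda,y):=\|\xi(\lambda)[h(\lambda+\tau,ay)-ah(\lambda,y)]\|$. Condition $(I_1)(ii)$ controls $\sup_{y}\frac{1}{\sigma(\ell,w)}\int_{-\ell}^{\ell}F(\lambda,y)w(\lambda)\,d\lambda$. But $\mathbb{J}_1$ evaluates $F$ at the moving point $y=\frac1a\zeta(\lambda+\tau)$, and controlling it would need the supremum inside the integral. This fails even under the Lipschitz condition $(I_2)$ once $\mathbb{F}$ is infinite-dimensional. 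Take $\mathbb{F}=\ell^2$, $\mathbb{E}=\mathbb{R}$, $\xi\equiv w\equiv1$, $a=\tau=1$, and blocks $B_k=[k^2,k^2+k]$ for $k\ge2$. Let $0\le\chi_k\le1$ be continuous cut-offs supported in $B_k$ and equal to $1$ on $[k^2+1,k^2+k-1]$, and set $h(\lambda,y)=\cos(\pi\lambda)\sum_k\chi_k(\lambda)\max\{0,1-2\|y-e_k\|\}$. Then $|h|\le1$ and $h$ is $2$-Lipschitz in $y$, so $(I_1)(i)$, $(I_2)$ and $(I_2'')$ (with $G\equiv1$) hold. The balls $B(e_k,\frac12)$ are disjoint, so for each $y$ the function $F(\cdot,y)$ is supported in a single interval $[k^2-1,k^2+k]$ and has integral at most $2k$. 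Hence the averages in $(I_1)(ii)$ are at most $\sqrt{\ell+1}/\ell$, uniformly in $y$. Now let $\zeta=e_k$ on $B_k$, linear from $e_k$ to $e_{k+1}$ on $[k^2+k,(k+1)^2]$, and $\zeta\equiv e_2$ on $(-\infty,4]$. Then $\|\zeta(\lambda+1)-\zeta(\lambda)\|\to0$, so $\zeta$ is even \emph{S}-asymptotically periodic. Yet $h(\lambda+1,\zeta(\lambda+1))-h(\lambda,\zeta(\lambda))=-2\cos(\pi\lambda)$ on $\bigcup_k[k^2+1,k^2+k-2]$, a set of density $\frac12$ in $[0,\infty)$. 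So $\overline{\Psi}_h(\zeta)\notin{\rm PSAP}_{\xi,\tau,a}(\mathbb{R},\mathbb{E},w)$. The statement therefore needs a stronger hypothesis, for instance $(I_1)(ii)$ with the supremum moved inside, $\lim_{\ell\to\infty}\frac{1}{\sigma(\ell,w)}\int_{-\ell}^{\ell}\sup_{y\in\mathbb{F}}F(\lambda,y)\,w(\lambda)\,d\lambda=0$. Under that hypothesis $\mathbb{J}_1$ is dominated by this average, and the rest of your proof goes through unchanged.
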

\begin{proof}
It follows from $(I_1)(i)$ that $\overline{\Psi}_h(\zeta)\in BC_{\xi}(\mathbb{R},\mathbb{E})$, and there exists a positive constant $C$ such that
$$\displaystyle{\sup_{\lambda\in \mathbb{R}}\left\|\xi(\lambda)h(\lambda,y)\right\|\leq C}, \quad \forall  y\in \mathbb{F}.$$ Meanwhile, we also have
\begin{eqnarray*}
&&\frac{1}{\sigma(\ell,w)}\int_{-\ell}^\ell \left\|\xi(\lambda)[\overline{\Psi}_h(\zeta)(\lambda+\tau)-a\overline{\Psi}_h(\zeta)(\lambda)]\right\|w(\lambda)d\lambda\\
&\leq& \frac{1}{\sigma(\ell,w)}\int_{-\ell}^\ell \Bigg\|\xi(\lambda)\Bigg[h(\lambda+\tau,\zeta(\lambda+\tau))-ah\left(\lambda,\frac{1}{a}\zeta(\lambda+\tau)\right)\Bigg]\Bigg\|w(\lambda)d\lambda\\
&&+\frac{1}{\sigma(\ell,w)}\int_{-\ell}^\ell \Bigg\|\xi(\lambda)\Bigg[ah\left(\lambda,\frac{1}{a}\zeta(\lambda+\tau)\right)-ah(\lambda,\zeta(\lambda))\Bigg]\Bigg\|w(\lambda)d\lambda\\
&:=&\mathbb{J}_1+\mathbb{J}_2.
\end{eqnarray*}
Clearly, $\displaystyle{\lim_{\ell\rightarrow\infty}\mathbb{J}_1=0}$. And we can see that if $$\left\|\xi(\lambda)[\zeta(\lambda+\tau)-a\zeta(\lambda)]\right\|_{\mathbb{F}}<|a|\mathbbm{d},$$ then
$$\left\|\frac{\xi(\lambda)}{a}\zeta(\lambda+\tau)-\xi(\lambda)\zeta(\lambda)\right\|_{\mathbb{F}}=\left\|\frac{1}{a}\xi(\lambda)[\zeta(\lambda+\tau)-a\zeta(\lambda)]\right\|_{\mathbb{F}}<\mathbbm{d}.$$
By $(I_2^{\prime\prime})$, we can get
\begin{eqnarray}
&&\frac{1}{\sigma(\ell,w)}\int_{-\ell}^\ell \Bigg\|\xi(\lambda)\Bigg[ah\left(\lambda,\frac{1}{a}\zeta(\lambda+\tau)\right)-ah(\lambda,\zeta(\lambda))\Bigg]\Bigg\|w(\lambda)d\lambda\nonumber\\
&=&\frac{|a|}{\sigma(\ell,w)}\int_{-\ell}^\ell\left\|\xi(\lambda)\left[h\left(\lambda,\frac{1}{\xi(\lambda)}\frac{\xi(\lambda)}{a}\zeta(\lambda+\tau)\right)-h\left(\lambda,\frac{1}{\xi(\lambda)}\xi(\lambda)\zeta(\lambda)\right)\right]
\right\| w(\lambda)d\lambda\nonumber\\
&=&\frac{|a|}{\sigma(\ell,w)}\int_{[-\ell,\ell]\backslash\Sigma_{\ell,|a|\mathbbm{d}}(\zeta)}\left\|\xi(\lambda)\left[h\left(\lambda,\frac{1}{\xi(\lambda)}\frac{\xi(\lambda)}{a}\zeta(\lambda+\tau)\right)-h\left(\lambda,\frac{1}{\xi(\lambda)}\xi(\lambda)\zeta(\lambda)\right)\right]\right\|w(\lambda)d\lambda\nonumber\\
&&+\frac{|a|}{\sigma(\ell,w)}\int_{\Sigma_{\ell,|a|\mathbbm{d}}(\zeta)}\left\|\xi(\lambda)\left[h\left(\lambda,\frac{1}{\xi(\lambda)}\frac{\xi(\lambda)}{a}\zeta(\lambda+\tau)\right)-h\left(\lambda,\frac{1}{\xi(\lambda)}\xi(\lambda)\zeta(\lambda)\right)\right]\right\|w(\lambda)d\lambda\nonumber\\
&\leq&\frac{1}{\sigma(\ell,w)}\int_{-\ell}^\ell G(\lambda)w(\lambda)d\lambda\cdot|a|\mu+2|a|C\frac{1}{\sigma(\ell,w)}\int_{\Sigma_{\ell,|a|\mathbbm{d}}(\zeta)}w(\lambda)d\lambda.\nonumber
\end{eqnarray}
By Lemma \ref{jdkkd}, the conditions $\zeta\in {\rm{PSAP}}_{\xi,\tau,a}(\mathbb{R},\mathbb{F},w)$, $G(\cdot)\in \mathcal{B}_w$ and the arbitrariness of $\mu$, we arrive at
$\displaystyle{\lim_{\ell\rightarrow\infty}\mathbb{J}_2=0}$. It follows that
\begin{center}
$\displaystyle{\frac{1}{\sigma(\ell,w)}\int_{-\ell}^\ell \left\|\xi(\lambda)[\overline{\Psi}_h(\zeta)(\lambda+\tau)-a\overline{\Psi}_h(\zeta)(\lambda)]\right\|w(\lambda)d\lambda\rightarrow0},\ {\rm{as}}\ \ell\rightarrow \infty,$
\end{center}
which shows that $\overline{\Psi}_h(\zeta)\in {\rm{PSAP}}_{\xi,\tau,a}(\mathbb{R},\mathbb{E},w)$. Hence, the theorem is established.
\end{proof}

From Theorem \ref{dndk}, we can derive the following two corollaries.
\begin{Corollary}\label{dhjkdkd}
Let $h\in C(\mathbb{R}\times \mathbb{F},\mathbb{E})$ satisfy $(I_1)$ and the following condition:\\
$(J)$ There exists a function $G(\cdot)\in \mathcal{B}_w$ such that
for any $y_i\in \mathbb{F}\ (i=1,2)$,
$$\|\mathcal{H}(\lambda,y_1)-\mathcal{H}(\lambda,y_2)\|\leq G(\lambda)\|y_1-y_2\|_{\mathbb{F}},\  \lambda\in \mathbb{R},$$
where $\mathcal{H}$ is defined by formula $(\ref{bdjkdb})$.
 Then for any $\zeta\in {\rm{PSAP}}_{\xi,\tau,a}(\mathbb{R},\mathbb{F},w)$, we have $\overline{\Psi}_h(\zeta)\in {\rm{PSAP}}_{\xi,\tau,a}(\mathbb{R},\mathbb{E},w)$.
\end{Corollary}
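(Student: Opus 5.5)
The plan is to derive Corollary \ref{dhjkdkd} directly from Theorem \ref{dndk}. That is, I will check that the Lipschitz-type condition $(J)$ implies the condition $(I_2^{\prime\prime})$ for the same function $G\in\mathcal{B}_w$. Condition $(I_1)$ is assumed in both statements, so $(I_2^{\prime\prime})$ is the only hypothesis left to verify.

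Given $\mu>0$, I choose $\mathbbm{d}:=\mu$. Let $y_1,y_2\in\mathbb{F}$ with $\|y_1-y_2\|_{\mathbb{F}}<\mathbbm{d}$. Then $(J)$ gives, for every $\lambda\in\mathbb{R}$,
$$\|\mathcal{H}(\lambda,y_1)-\mathcal{H}(\lambda,y_2)\|\leq G(\lambda)\|y_1-y_2\|_{\mathbb{F}}\leq G(\lambda)\mu .$$
This is exactly $(I_2^{\prime\prime})$ with the same $G$. The choice of $\mathbbm{d}$ depends only on $\mu$, and not on $\lambda$, $y_1$ or $y_2$, which is the uniformity that $(I_2^{\prime\prime})$ requires. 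Theorem \ref{dndk} then yields $\overline{\Psi}_h(\zeta)\in{\rm{PSAP}}_{\xi,\tau,a}(\mathbb{R},\mathbb{E},w)$ for every $\zeta\in{\rm{PSAP}}_{\xi,\tau,a}(\mathbb{R},\mathbb{F},w)$.

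There is no real obstacle here. The only point to check is that $\mathcal{H}$ in $(J)$ is the same function defined by (\ref{bdjkdb}) and used in $(I_2^{\prime\prime})$. It is, because both conditions refer explicitly to formula (\ref{bdjkdb}), so no rescaling by $\xi$ is needed. A direct proof is also available as an alternative: repeat the splitting $\mathbb{J}_1+\mathbb{J}_2$ from the proof of Theorem \ref{dndk}. In that case $\mathbb{J}_2$ is bounded by
$$\frac{1}{\sigma(\ell,w)}\int_{-\ell}^{\ell}G(\lambda)\,\|\xi(\lambda)[\zeta(\lambda+\tau)-a\zeta(\lambda)]\|_{\mathbb{F}}\,w(\lambda)\,d\lambda ,$$
and one then splits this integral over $\Sigma_{\ell,\mu}(\zeta)$ and its complement. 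That route is longer, so I will use the reduction to Theorem \ref{dndk}.
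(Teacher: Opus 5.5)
Your proposal is correct and is the paper's argument: the paper also just observes that $(J)$ implies $(I_2^{\prime\prime})$ and invokes Theorem \ref{dndk}. Your explicit choice $\mathbbm{d}=\mu$, using $G\geq 0$, is exactly the verification the paper leaves to the reader.
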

\begin{proof}
We can easily verify that condition $(J)$ implies condition $(I_2^{\prime\prime})$, so by Theorem \ref{dndk} we draw the conclusion of this corollary.
\end{proof}

\begin{Corollary}\label{vnwkejj}
Let $h\in C(\mathbb{R}\times \mathbb{F},\mathbb{E})$ satisfy $(I_1)$ and the following condition:\\
$(J^\prime)$ There exists a function $G(\cdot)\in L^q(\mathbb{R},[0,\infty),w), \ q\in[1,\infty)$ such that
for any $y_i\in \mathbb{F}\ (i=1,2)$,
$$\|\mathcal{H}(\lambda,y_1)-\mathcal{H}(\lambda,y_2)\|\leq G(\lambda)\|y_1-y_2\|_{\mathbb{F}},\ \lambda\in \mathbb{R},$$
where $\mathcal{H}$ is defined by formula $(\ref{bdjkdb})$.
 Then for any $\zeta\in {\rm{PSAP}}_{\xi,\tau,a}(\mathbb{R},\mathbb{F},w)$, $\overline{\Psi}_h(\zeta)\in {\rm{PSAP}}_{\xi,\tau,a}(\mathbb{R},\mathbb{E},w)$.
\end{Corollary}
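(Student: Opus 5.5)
The plan is to reduce Corollary \ref{vnwkejj} to Corollary \ref{dhjkdkd}. Condition $(J')$ has the same shape as condition $(J)$, except that the Lipschitz bound $G$ is assumed to lie in $L^q(\mathbb{R},[0,\infty),w)$ instead of $\mathcal{B}_w$. So it suffices to show that $L^q(\mathbb{R},[0,\infty),w)\subseteq \mathcal{B}_w$, or at least that any such $G$ has bounded (indeed vanishing) weighted means. Since $(I_1)$ is assumed, Corollary \ref{dhjkdkd}, which itself rests on Theorem \ref{dndk}, then gives $\overline{\Psi}_h(\zeta)\in {\rm{PSAP}}_{\xi,\tau,a}(\mathbb{R},\mathbb{E},w)$.

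The key step is the estimate of $\frac{1}{\sigma(\ell,w)}\int_{-\ell}^\ell G(\lambda)w(\lambda)\,d\lambda$.

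\emph{Case $q=1$.} The numerator is at most $\int_{\mathbb{R}}Gw<\infty$. Because $w>0$ a.e., $\sigma(\ell,w)$ is positive and nondecreasing, so the quotient is bounded for large $\ell$. If in addition $\sigma(\ell,w)\to\infty$, the quotient tends to $0$.

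\emph{Case $1<q<\infty$.} Apply Hölder's inequality with respect to the measure $w\,d\lambda$, with $1/q+1/q'=1$:
$$\int_{-\ell}^{\ell}G w\,d\lambda\le \Big(\int_{-\ell}^{\ell}G^q w\,d\lambda\Big)^{1/q}\sigma(\ell,w)^{1/q'}.$$
Dividing by $\sigma(\ell,w)$ gives
$$\frac{1}{\sigma(\ell,w)}\int_{-\ell}^{\ell}Gw\,d\lambda\le \|G\|_{L^q(w)}\,\sigma(\ell,w)^{-1/q}.$$
The right side is bounded for $\ell\ge \ell_0$ and tends to $0$ when $\sigma(\ell,w)\to\infty$.

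In both cases the weighted mean stays bounded. Local integrability of $G$ follows from local integrability of $G^q w$ together with $w>0$ a.e., or is taken as part of the definition of the space. Hence $G\in\mathcal{B}_w$.

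The main obstacle is that $\mathcal{B}_w$ is defined through the existence of the \emph{limit} of the weighted means, not merely their boundedness. Inspecting the proof of Theorem \ref{dndk}, only boundedness (the $\limsup$) of $\frac{1}{\sigma(\ell,w)}\int_{-\ell}^\ell Gw$ is actually used, to control the term $|a|\mu$ times this mean as $\mu$ becomes arbitrary. I would therefore argue directly in one of two ways.

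\begin{itemize}
\item Rerun the estimate of $\mathbb{J}_2$ from Theorem \ref{dndk}, using the bound above in place of membership in $\mathcal{B}_w$.
\item Note that when $\sigma(\ell,w)\to\infty$ the limit exists and equals $0$. When $\sigma(\ell,w)$ stays bounded, $\sigma(\ell,w)$ increases to a finite limit and the numerator converges by monotone convergence, so the limit exists and is finite. In both situations $G\in\mathcal{B}_w$ literally.
\end{itemize}

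Either route gives $G\in\mathcal{B}_w$, or enough of it. Condition $(J')$ then implies $(J)$, and Corollary \ref{dhjkdkd} completes the argument.
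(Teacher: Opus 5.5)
Your proposal is correct and follows the paper's route: you reduce to Corollary \ref{dhjkdkd} by proving $L^q(\mathbb{R},[0,\infty),w)\subseteq\mathcal{B}_w$ with H\"older's inequality for the measure $w\,d\lambda$. Your bound $\|G\|_{L^q(w)}\,\sigma(\ell,w)^{-1/q}$ is the correct form of the paper's displayed estimate, whose exponent is misprinted, and your check that the limit defining $\mathcal{B}_w$ actually exists fills in a point the paper passes over silently.
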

\begin{proof}
We only need to prove that $L^q(\mathbb{R},[0,\infty),w)\subseteq \mathcal{B}_w,$ for any $q\in[1,\infty)$. It is obvious that $L^1(\mathbb{R},[0,\infty),w)\subseteq \mathcal{B}_w$. When $q>1$, set $\displaystyle{p=\frac{q-1}{q}}$. By generalized H${\rm{\ddot{o}}}$lder inequality, we get
$$\frac{1}{\sigma(\ell,w)}\int_{-\ell}^\ell h(\lambda)w(\lambda)d\lambda\leq \frac{1}{\left(\sigma(\ell,w)\right)^{1-1/p}}\left(\int_{-\ell}^\ell h^q(\lambda)w(\lambda)d\lambda\right)^{1/q},$$
which implies that $L^q(\mathbb{R},[0,\infty),w)\subseteq \mathcal{B}_w$ for any $q\in[1,\infty)$. So the desired result follows from an application of Corollary \ref{dhjkdkd}.
\end{proof}
\begin{Remark}
 The condition imposed on $\xi$ in compositions results from Theorem \ref{djdk} to Corollary \ref{vnwkejj} is very weak. We do not require $\xi(\cdot)\in \widetilde{\mathbb{U}}$, nor do we impose any specific relationship between $\xi(\cdot)$ and $\tau,\ a$, which substantially broadens the applicable range of such weighted pseudo $\emph{S}$-asymptotically $(\xi,\tau,a)$-periodic functions.
\end{Remark}
To study the equations (\ref{equation2}) and (\ref{equation1}), we can extend the definitions and theorems from previous parts of this section  from $\mathbb{R}$ to $[0,\infty)$ and just provide a few such definitions and theorems below and omit the rest.

For given $0\neq a\in \mathbb{C},\ \tau\in (0,\infty)$, define the following set
$$BC_{\xi}([0,\infty),\mathbb{E})=\left\{ h\in C([0,\infty),\mathbb{E}):\ \sup_{\lambda\geq 0}\|\xi(\lambda)h(\lambda)\|<\infty\right\}.$$
\begin{Lemma}
Let $h\in C([0,\infty),\mathbb{E})$. Then $h\in BC_{\xi}([0,\infty),\mathbb{E})$ if and only if $h(\lambda)=\frac{1}{\xi(\lambda)}h_1(\lambda)$, $ h_1(\lambda)\in BC([0,\infty),\mathbb{E}).$
\end{Lemma}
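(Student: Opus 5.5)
This is the half-line analogue of Lemma \ref{jjdjdj}, and I will prove it the same way, by splitting into necessity and sufficiency. The only ingredients are the standing assumption that $\xi\in C(\mathbb{R},\mathbb{C})$ with $\xi(\lambda)\neq 0$, used here restricted to $[0,\infty)$, and the definition of $BC_{\xi}([0,\infty),\mathbb{E})$.

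\emph{Necessity.} Let $h\in BC_{\xi}([0,\infty),\mathbb{E})$ and set $h_1(\lambda):=\xi(\lambda)h(\lambda)$ for $\lambda\geq 0$.
\begin{itemize}
\item $h_1$ is continuous on $[0,\infty)$, being the product of the continuous scalar function $\xi$ and the continuous $\mathbb{E}$-valued function $h$.
\item $h_1$ is bounded, since $\sup_{\lambda\geq 0}\|h_1(\lambda)\|=\sup_{\lambda\geq0}\|\xi(\lambda)h(\lambda)\|<\infty$ by the definition of $BC_{\xi}([0,\infty),\mathbb{E})$.
\end{itemize}
Hence $h_1\in BC([0,\infty),\mathbb{E})$. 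Because $\xi$ never vanishes, we may divide by it, giving $h(\lambda)=\frac{1}{\xi(\lambda)}h_1(\lambda)$ for all $\lambda\geq 0$.

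\emph{Sufficiency.} Suppose $h=\frac{1}{\xi}h_1$ with $h_1\in BC([0,\infty),\mathbb{E})$.
\begin{itemize}
\item Since $\xi$ is continuous and nonvanishing, $1/\xi$ is continuous on $[0,\infty)$. Therefore $h\in C([0,\infty),\mathbb{E})$.
\item We have $\sup_{\lambda\geq 0}\|\xi(\lambda)h(\lambda)\|=\sup_{\lambda\geq 0}\|h_1(\lambda)\|=\|h_1\|_\infty<\infty$.
\end{itemize}
So $h\in BC_{\xi}([0,\infty),\mathbb{E})$.

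I do not expect a genuine obstacle here. The only point needing care is the continuity of $1/\xi$, which relies on the nonvanishing of $\xi$ stated at the start of the section. As a by-product, the map $h\mapsto \xi h$ is an isometry from $(BC_{\xi}([0,\infty),\mathbb{E}),\|\cdot\|_{b\xi})$ onto $(C_b([0,\infty),\mathbb{E}),\|\cdot\|_\infty)$. This will later give completeness of $BC_{\xi}([0,\infty),\mathbb{E})$, exactly as on $\mathbb{R}$.
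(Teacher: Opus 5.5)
Your proof is correct and is exactly the argument the paper gives for the whole-line version, Lemma \ref{jjdjdj}: set $h_1=\xi h$ for necessity, and use $\sup_{\lambda\ge0}\|\xi(\lambda)h(\lambda)\|=\|h_1\|_\infty$ for sufficiency. The paper omits the half-line proof as a direct transfer of that argument, and your explicit checks of the continuity of $h_1$ and of $1/\xi$ only fill in details the paper leaves implicit.
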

\begin{Lemma}
The set $BC_{\xi}([0,\infty),\mathbb{E})$ is a Banach space with the norm
$$\|h\|_{b\xi}=\displaystyle{\sup_{\lambda\geq 0}}\|\xi(\lambda)h(\lambda)\|.$$
\end{Lemma}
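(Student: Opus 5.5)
We will follow the same route as the analogous result on $\mathbb{R}$: reduce $BC_{\xi}([0,\infty),\mathbb{E})$ isometrically to $C_b([0,\infty),\mathbb{E})$ and use the completeness of the latter. First we check that $\|\cdot\|_{b\xi}$ is a norm on $BC_{\xi}([0,\infty),\mathbb{E})$.
\begin{itemize}
\item It is finite by the definition of the space.
\item Homogeneity and the triangle inequality follow pointwise from the corresponding properties of $\|\cdot\|$, after taking the supremum over $\lambda\geq 0$.
\item Definiteness uses the standing assumption $\xi(\lambda)\neq 0$: if $\sup_{\lambda\ge 0}\|\xi(\lambda)h(\lambda)\|=0$, then $h(\lambda)=0$ for every $\lambda\geq 0$.
\end{itemize}

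For completeness, take a Cauchy sequence $\{h_n\}\subset BC_{\xi}([0,\infty),\mathbb{E})$. By the preceding lemma (the half-line analogue of Lemma \ref{jjdjdj}), write $h_n=\frac{1}{\xi}h_{1n}$ with $h_{1n}:=\xi h_n\in C_b([0,\infty),\mathbb{E})$. Since
$$\|h_n-h_m\|_{b\xi}=\|h_{1n}-h_{1m}\|_\infty,$$
the sequence $\{h_{1n}\}$ is Cauchy in $C_b([0,\infty),\mathbb{E})$. That space is complete, being a closed subspace of the bounded functions under the sup norm, since uniform limits of continuous functions are continuous. Hence there is $h_1\in C_b([0,\infty),\mathbb{E})$ with $\|h_{1n}-h_1\|_\infty\to 0$.

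Next define $h(\lambda):=\frac{1}{\xi(\lambda)}h_1(\lambda)$. This function is continuous because $\xi$ is continuous and nonvanishing. By the preceding lemma, $h\in BC_{\xi}([0,\infty),\mathbb{E})$. Finally,
$$\|h_n-h\|_{b\xi}=\|h_{1n}-h_1\|_\infty\to 0,$$
which proves completeness.

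There is no genuine obstacle here. The only point requiring care is the continuity of $1/\xi$ on $[0,\infty)$, which guarantees that the limit $h$ lies in $C([0,\infty),\mathbb{E})$; this is exactly where the standing hypothesis $\xi\in C(\mathbb{R},\mathbb{C})$ with $\xi\neq 0$ is used.
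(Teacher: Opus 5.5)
Your proposal is correct and is essentially the paper's argument. The paper omits the half-line proof, but its proof of the $\mathbb{R}$ version carries over verbatim. It writes $h_n=\frac{1}{\xi}h_{1n}$, uses completeness of the bounded continuous functions to get $h_{1n}\to h_1$, and sets $h=\frac{1}{\xi}h_1$. You additionally spell out the norm axioms and the continuity of the limit via $\xi\neq 0$, which the paper leaves implicit.
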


\begin{Definition}
For a function  $h$ such that $h(\cdot),\ h(\cdot+\tau)\in BC_{\xi}([0,\infty),\mathbb{E})$, it is called $\emph{S}$-asymptotically $(\xi,\tau,a)$-periodic if
$$\displaystyle{\lim_{\lambda\rightarrow +\infty}}\left\|\xi(\lambda)[h(\lambda+\tau)-ah(\lambda)]\right\|=0.$$
\end{Definition}
The set containing such functions is denoted by ${\rm{SAP}}_{\xi,\tau,a}([0,\infty),\mathbb{E})$.

Set
$${\rm{SAP}}_{\tau,a,0}([0,\infty),\mathbb{E})=\{h\in {\rm{SAP}}_{\tau,a}([0,\infty),\mathbb{E}):\ h(0)=0\},$$
and
$$BC_{\tau,a,0}(\mathbb{E})=\{h\in BC_{\tau,a}([-\kappa,\infty),\mathbb{E}):\ h|_{\lambda\geq 0}\in BC_{\tau,a}([0,\infty),\mathbb{E}),\ h|_{[-\kappa,0]}=0\}.$$

\begin{Definition}\label{hddj}
For a function  $h$ such that $h(\cdot),\ h(\cdot+\tau)\in BC_{\xi}([0,\infty),\mathbb{E})$, it is called weighted pseudo $\emph{S}$-asymptotically $(\xi,\tau,a)$-periodic with respect to $w$, or $w$-pseudo $\emph{S}$-asymptotically $(\xi,\tau,a)$-periodic, if it satisfies
$$\lim_{\ell\rightarrow \infty}\frac{1}{\sigma(\ell,w)}\int_0^\ell \left\|\xi(\lambda)[h(\lambda+\tau)-ah(\lambda)]\right\|w(\lambda)d\lambda=0,$$
where $\sigma(\ell,w)=\displaystyle{\int_{0}^\ell w(\theta)d\theta}$.
\end{Definition}
${\rm{PSAP}}_{\xi,\tau,a}([0,\infty),\mathbb{E},w)$ denotes the set of all $w$-pseudo $\emph{S}$-asymptotically $(\xi,\tau,a)$-periodic functions from $[0,\infty)$ to $\mathbb{E}$.

\begin{Theorem}\label{dhjd}
Suppose $\{\mathcal{T}(\lambda)\}_{\lambda\geq 0}$ is a family of bounded operators on $\mathbb{E}$ satisfying that $\{\xi(\lambda)\mathcal{T}(\lambda)\}_{\lambda\geq 0}$ is strongly continuous and $\xi(\lambda)\mathcal{T}(\lambda)\in
L^1([0,\infty),\mathcal{B}(\mathbb{E}))$.  Also suppose  $\xi\in \widetilde{\mathbb{U}}$, $w_r\sim w$ and $|\xi_r|\sim |\xi|$ for any $r>0$. Then for any $h\in {\rm{PSAP}}_{\xi,\tau,a}([0,\infty),\mathbb{E},w)$, the convolution
$$\phi(\lambda):=\int_0^\lambda \mathcal{T}(\lambda-\theta)h(\theta)d\theta$$
also belongs to ${\rm{PSAP}}_{\xi,\tau,a}([0,\infty),\mathbb{E},w)$.
\end{Theorem}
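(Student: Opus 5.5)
We follow the proof of the convolution theorem on $\mathbb{R}$, with one extra boundary term coming from the finite lower limit of integration. Throughout, write $h(\lambda)=\frac{1}{\xi(\lambda)}h_1(\lambda)$ with $h_1\in BC([0,\infty),\mathbb{E})$, which is possible by the half-line analogue of Lemma \ref{jjdjdj}.

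\emph{Step 1: $\phi\in BC_\xi([0,\infty),\mathbb{E})$.} Since $\xi\in\widetilde{\mathbb{U}}$, we have $\left|\frac{1}{\xi(\lambda-\theta)}\right|\leq \mathcal{M}\left|\frac{\xi(\theta)}{\xi(\lambda)}\right|$. Hence
$$\|\xi(\lambda)\phi(\lambda)\|\leq \mathcal{M}\int_0^\lambda\|\xi(\theta)\mathcal{T}(\theta)\|\,\|h_1(\lambda-\theta)\|\,d\theta\leq \mathcal{M}\|h_1\|_\infty\|\xi\mathcal{T}\|_{L^1}.$$
Continuity of $\phi$ follows from the strong continuity assumption by a standard argument. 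The same estimate, applied at $\lambda+\tau$, gives $\phi(\cdot+\tau)\in BC_\xi$.

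\emph{Step 2: split the difference.} Substituting $\theta\mapsto\theta+\tau$ in $\phi(\lambda+\tau)$ gives
$$\phi(\lambda+\tau)-a\phi(\lambda)=\int_0^\lambda \mathcal{T}(\lambda-\theta)[h(\theta+\tau)-ah(\theta)]\,d\theta+\int_{\lambda}^{\lambda+\tau}\mathcal{T}(\theta)h(\lambda+\tau-\theta)\,d\theta=:I_1(\lambda)+I_2(\lambda).$$
The second integral is written here after the change of variable $\theta\mapsto\lambda+\tau-\theta$ in $\int_{-\tau}^0\mathcal{T}(\lambda-\theta)h(\theta+\tau)\,d\theta$.

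\emph{Step 3: the term $I_1$.} Arguing exactly as in the $\mathbb{R}$ case, using Fubini and $\xi\in\widetilde{\mathbb{U}}$,
$$\frac{1}{\sigma(\ell,w)}\int_0^\ell\|\xi(\lambda)I_1(\lambda)\|w(\lambda)\,d\lambda\leq \mathcal{M}\int_0^\ell\|\xi(\theta)\mathcal{T}(\theta)\|\,\Phi_\ell(\theta)\,d\theta,$$
where
$$\Phi_\ell(\theta)=\frac{1}{\sigma(\ell,w)}\int_\theta^\ell\|\xi(\lambda-\theta)[h(\lambda-\theta+\tau)-ah(\lambda-\theta)]\|\,w(\lambda)\,d\lambda.$$
Substituting $s=\lambda-\theta$ and using $w_\theta\sim w$, we get $\Phi_\ell(\theta)\leq \frac{C_\theta}{\sigma(\ell,w)}\int_0^{\ell}\|\xi(s)[h(s+\tau)-ah(s)]\|w(s)\,ds$, which tends to $0$ as $\ell\to\infty$ for each fixed $\theta$. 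For domination, $\Phi_\ell(\theta)\leq (C+|a|)\|h\|_{b\xi}$ uniformly in $\ell$ and $\theta$: use $|\xi(\lambda-\theta)|\cdot\|h(\lambda-\theta+\tau)\|\leq C\|h(\cdot+\tau)\|_{b\xi}$, valid since $|\xi_\tau|\sim|\xi|$, and $\int_\theta^\ell w\leq\sigma(\ell,w)$. Dominated convergence then gives that the average of $I_1$ tends to $0$.

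The main obstacle is in Step 3: the constant $C_\theta$ from $w_\theta\sim w$ depends on $\theta$, so convergence holds pointwise in $\theta$ but not uniformly. That is why the argument has to use the separate uniform bound $(C+|a|)\|h\|_{b\xi}$ for domination, rather than trying to control $C_\theta$ directly.

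\emph{Step 4: the boundary term $I_2$.} We show that $\|\xi(\lambda)I_2(\lambda)\|\to0$ as $\lambda\to\infty$. By $\xi\in\widetilde{\mathbb{U}}$,
$$\|\xi(\lambda)I_2(\lambda)\|\leq \mathcal{M}\,\sup_{\lambda\leq\theta\leq\lambda+\tau}\left|\frac{\xi(\lambda)}{\xi(\lambda+\tau)}\right|\int_\lambda^{\lambda+\tau}\|\xi(\theta)\mathcal{T}(\theta)\|\,\|h_1(\lambda+\tau-\theta)\|\,d\theta,$$
where we used $\frac{1}{|\xi(\lambda+\tau-\theta)|}\leq \mathcal{M}\frac{|\xi(\theta)|}{|\xi(\lambda+\tau)|}$. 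The prefactor is bounded since $|\xi_\tau|\sim|\xi|$. The integral is at most $\|h_1\|_\infty$ times the tail $\int_\lambda^{\infty}\|\xi\mathcal{T}\|$, which tends to $0$ because $\xi\mathcal{T}\in L^1$. A function whose norm tends to $0$ at infinity has weighted mean $\frac{1}{\sigma(\ell,w)}\int_0^\ell\|\xi I_2\|w\to0$, provided $\sigma(\ell,w)\to\infty$. This holds for the weights under consideration, and we would state it explicitly as an assumption if it is needed. Combining Steps 3 and 4 yields $\phi\in{\rm PSAP}_{\xi,\tau,a}([0,\infty),\mathbb{E},w)$.
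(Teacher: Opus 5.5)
Your argument is correct and follows the paper's route: it is the half-line version of the paper's convolution theorem on $\mathbb{R}$, using Fubini, the $\widetilde{\mathbb{U}}$ bound $|\xi(\lambda)|\le\mathcal{M}|\xi(\theta)||\xi(\lambda-\theta)|$, and dominated convergence in $\theta$. The paper states Theorem~\ref{dhjd} without proof. Your boundary term $I_2$ is exactly what its ``extend from $\mathbb{R}$ to $[0,\infty)$'' remark passes over, since on $\mathbb{R}$ the lower limit $-\infty$ makes that term vanish.

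The one thing to correct is the hedge at the end of Step 4. The condition $\lim_{\ell\to\infty}\sigma(\ell,w)=\infty$ does \emph{not} follow from the stated hypotheses, and it cannot be dropped, because without it the statement is false.

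Take $w(\lambda)=e^{-\lambda}$. Then $w_r/w\equiv e^{-r}$, so $w_r\sim w$ for every $r$, while $\sigma(\ell,w)=1-e^{-\ell}\to 1$. Now choose $\mathbb{E}=\mathbb{C}$, $\xi\equiv 1$ (so $\mathcal{M}=1$), $a=1$, $\mathcal{T}(\lambda)=e^{-\lambda}$, and $h\equiv 1\in\mathrm{PSAP}_{\xi,\tau,a}([0,\infty),\mathbb{C},w)$. Every hypothesis holds. However, $\phi(\lambda)=1-e^{-\lambda}$, and
\[
\frac{1}{\sigma(\ell,w)}\int_0^\ell|\phi(\lambda+\tau)-\phi(\lambda)|\,w(\lambda)\,d\lambda=\frac{(1-e^{-\tau})(1-e^{-2\ell})}{2(1-e^{-\ell})}\longrightarrow\frac{1-e^{-\tau}}{2}\neq 0 .
\]
In this example $I_1\equiv 0$, and the entire defect is your $I_2(\lambda)=e^{-\lambda}(1-e^{-\tau})$. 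An integrable weight does not average away a term that merely tends to zero.

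So the theorem needs the extra (standard) hypothesis $\sigma(\ell,w)\to\infty$. This holds, for example, for $w(\lambda)=1+\lambda^2$, the weight used later in the paper. With this hypothesis added, your proof is complete.
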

\begin{Theorem}\label{djdk11}
Let $h\in C([0,\infty)\times \mathbb{F},\mathbb{E})$ satisfy the following conditions:\\
$(I_1)$ $(i)$ For any $y\in \mathbb{F}$, $h(\cdot,y)\in BC_{\xi}([0,\infty),\mathbb{E})$, and $\displaystyle{\sup_{ y\in \mathbb{F}}\left\|h(\cdot,y)\right\|_{b\xi}<\infty}$;\\
$(ii)$ Given the limit $\displaystyle{\lim_{\ell\rightarrow \infty}\frac{1}{\sigma(\ell,w)}\int_{0}^\ell \left\|\xi(\lambda)[h(\lambda+\tau,ay)-ah(\lambda,y)]\right\|w(\lambda)d\lambda}$, it converges to zero uniformly with respect to $y\in \mathbb{F}$.\\
$(I_2)$ There exists a positive constant $K$ such that
$$\|h(\lambda,y_1)-h(\lambda,y_2)\|\leq K\|y_1-y_2\|_{\mathbb{F}},\ \forall   y_i\in \mathbb{F}\ (i=1,2),\ \lambda\geq 0.$$
Then for any $\zeta\in {\rm{PSAP}}_{\xi,\tau,a}([0,\infty),\mathbb{F},w)$, we have $\overline{\Psi}_h(\zeta)\in {\rm{PSAP}}_{\xi,\tau,a}([0,\infty),\mathbb{E},w)$.
\end{Theorem}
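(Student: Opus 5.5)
The argument transcribes the proof of Theorem \ref{djdk} to the half-line $[0,\infty)$, with $\int_{-\ell}^{\ell}$ replaced by $\int_0^\ell$ and $\sigma(\ell,w)=\int_0^\ell w(\theta)d\theta$. Fix $\zeta\in {\rm{PSAP}}_{\xi,\tau,a}([0,\infty),\mathbb{F},w)$. Two things must be checked: that $\overline{\Psi}_h(\zeta)$ and its translate $\overline{\Psi}_h(\zeta)(\cdot+\tau)$ lie in $BC_{\xi}([0,\infty),\mathbb{E})$, and that the weighted mean in Definition \ref{hddj} vanishes.

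For the first point, continuity of $\lambda\mapsto h(\lambda,\zeta(\lambda))$ follows from the continuity of $h$ and $\zeta$. Condition $(I_1)(i)$ gives a constant $C$ with $\|\xi(\lambda)h(\lambda,y)\|\le C$ for all $\lambda\ge0$ and $y\in\mathbb{F}$. Evaluating at $y=\zeta(\lambda)$ yields $\sup_{\lambda\geq0}\|\xi(\lambda)h(\lambda,\zeta(\lambda))\|\le C$. The translated function is handled in the same way, using $y=\zeta(\lambda+\tau)$ at the point $\lambda+\tau$ together with the boundedness of $\xi(\lambda)h(\lambda+\tau,y)$. Where needed, that boundedness comes from $|\xi_\tau|\sim|\xi|$, which is assumed as in the earlier results.

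For the second point, add and subtract $a h(\lambda,\frac1a\zeta(\lambda+\tau))$ and split the integral as $\mathbb{J}_1+\mathbb{J}_2$, where
\begin{eqnarray*}
\mathbb{J}_1&=&\frac{1}{\sigma(\ell,w)}\int_0^\ell\Big\|\xi(\lambda)\Big[h(\lambda+\tau,\zeta(\lambda+\tau))-a h\Big(\lambda,\tfrac1a\zeta(\lambda+\tau)\Big)\Big]\Big\|w(\lambda)d\lambda,\\
\mathbb{J}_2&=&\frac{|a|}{\sigma(\ell,w)}\int_0^\ell\Big\|\xi(\lambda)\Big[h\Big(\lambda,\tfrac1a\zeta(\lambda+\tau)\Big)-h(\lambda,\zeta(\lambda))\Big]\Big\|w(\lambda)d\lambda.
\end{eqnarray*}

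For $\mathbb{J}_1$, set $y=\frac1a\zeta(\lambda+\tau)$, so that $ay=\zeta(\lambda+\tau)$. The integrand then becomes $\|\xi(\lambda)[h(\lambda+\tau,ay)-ah(\lambda,y)]\|$ with $y$ depending on $\lambda$. I expect this to be the main obstacle: $(I_1)(ii)$ controls the mean only for a fixed $y$, uniformly in $y$, and does not directly control a $\lambda$-dependent $y$. The natural reading, as in the proof of Theorem \ref{djdk}, is that the uniform convergence in $(I_1)(ii)$ is applied with $y$ frozen along the trajectory, giving $\mathbb{J}_1\to0$. To make this rigorous, I would first try to interpret $(I_1)(ii)$ as the pointwise-in-$\lambda$ supremum $\frac{1}{\sigma(\ell,w)}\int_0^\ell\sup_{y}\|\xi(\lambda)[h(\lambda+\tau,ay)-ah(\lambda,y)]\|w(\lambda)d\lambda\to0$. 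Under that reading $\mathbb{J}_1$ is dominated by this quantity and therefore tends to $0$. I would state explicitly that this is how the hypothesis is being used.

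For $\mathbb{J}_2$, the Lipschitz condition $(I_2)$ gives
\[
\mathbb{J}_2\le \frac{|a|K}{\sigma(\ell,w)}\int_0^\ell|\xi(\lambda)|\Big\|\tfrac1a\zeta(\lambda+\tau)-\zeta(\lambda)\Big\|_{\mathbb{F}}w(\lambda)d\lambda=\frac{K}{\sigma(\ell,w)}\int_0^\ell\|\xi(\lambda)[\zeta(\lambda+\tau)-a\zeta(\lambda)]\|_{\mathbb{F}}w(\lambda)d\lambda .
\]
The right-hand side tends to $0$ as $\ell\to\infty$ because $\zeta\in {\rm{PSAP}}_{\xi,\tau,a}([0,\infty),\mathbb{F},w)$. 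Combining the two limits gives $\overline{\Psi}_h(\zeta)\in {\rm{PSAP}}_{\xi,\tau,a}([0,\infty),\mathbb{E},w)$.
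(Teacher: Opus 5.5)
Your $\mathbb{J}_1+\mathbb{J}_2$ split and the Lipschitz bound for $\mathbb{J}_2$ are exactly the paper's argument. The paper gives no separate proof of Theorem \ref{djdk11}; it transcribes the proof of Theorem \ref{djdk}, where $\mathbb{J}_1\to0$ is asserted ``from $(I_1)(ii)$'' with nothing further. The step you flag is a genuine hole, and the paper has it too.

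Read literally, $(I_1)(ii)$ puts the supremum over $y$ outside the mean. Under that reading the hole cannot be filled, because the statement is false. Here is a counterexample.

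\begin{itemize}
\item[(a)] Take $\mathbb{F}=\ell^2$ with orthonormal basis $(e_n)$, $\mathbb{E}=\mathbb{C}$, $\xi\equiv1$, $w\equiv1$ and $a=\tau=1$.
\item[(b)] Let $\phi_n(y)=\max\{0,1-2\|y-e_n\|\}$; these have disjoint supports.
\item[(c)] Let $\chi_n$ be continuous with $0\le\chi_n\le1$, $\chi_n=1$ on $[n^2,(n+1)^2]$ and $\chi_n=0$ off $[n^2-1,(n+1)^2+1]$. Set
\[
h(\lambda,y)=\sum_{n\ge1}\phi_n(y)\chi_n(\lambda)\sin(\pi\lambda).
\]
\item[(d)] Then $|h|\le1$ and $h$ is $2$-Lipschitz in $y$. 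For fixed $y$ at most one term survives, so $|h(\lambda+1,y)-h(\lambda,y)|\le2$ and it vanishes off $[n^2-2,(n+1)^2+1]$. Hence the mean in $(I_1)(ii)$ is $O(\ell^{-1/2})$ uniformly in $y$, and $(I_1)$, $(I_2)$ hold.
\item[(e)] Let $\zeta=e_1$ on $[0,4]$. For $n\ge2$, let $\zeta$ move linearly from $e_{n-1}$ to $e_n$ on $[n^2,n^2+1]$ and equal $e_n$ on $[n^2+1,(n+1)^2]$. Since $\zeta(\lambda+1)-\zeta(\lambda)$ vanishes off $\bigcup_n[n^2-1,n^2+1]$, we have $\zeta\in{\rm PSAP}_{\xi,\tau,a}([0,\infty),\mathbb{F},w)$.
\item[(f)] On $[n^2+1,(n+1)^2-1]$, however, $h(\lambda+1,\zeta(\lambda+1))-h(\lambda,\zeta(\lambda))=-2\sin(\pi\lambda)$. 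So the mean of $|\overline{\Psi}_h(\zeta)(\lambda+1)-\overline{\Psi}_h(\zeta)(\lambda)|$ tends to $4/\pi$, not $0$.
\end{itemize}

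So your reading, with $\sup_y$ inside the integral, is a necessary strengthening of the hypothesis, not an interpretation of it. That integrand is measurable, being a supremum of continuous functions of $\lambda$. With it, $\mathbb{J}_1$ is dominated exactly as you say and the argument closes.

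The translate needs the same treatment. You invoke $|\xi_\tau|\sim|\xi|$ ``as in the earlier results'', but it is not a hypothesis of Theorem \ref{djdk11}. The paper's proof only checks $\overline{\Psi}_h(\zeta)\in BC_\xi$, never $\overline{\Psi}_h(\zeta)(\cdot+\tau)\in BC_\xi$. Some such condition is unavoidable:

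\begin{itemize}
\item[(a)] Take $\xi(\lambda)=e^{-\lambda^2}$, $a=\tau=1$, $w\equiv1$ and $h(\lambda,y)=e^{\lambda^2}\beta(\lambda)$. Here $\beta$ is a sum of tents of height $1$ and half-width $e^{-4n}$ centred at the integers $n\ge1$.
\item[(b)] Conditions $(I_1)$ and $(I_2)$ hold, since $e^{2\lambda+1}\beta(\lambda+1)-\beta(\lambda)$ is integrable on $[0,\infty)$.
\item[(c)] Yet $\xi(\lambda)h(\lambda+1,0)=e^{2\lambda+1}\beta(\lambda+1)$ is unbounded, so $\overline{\Psi}_h(0)\notin{\rm PSAP}_{\xi,\tau,a}([0,\infty),\mathbb{E},w)$.
\end{itemize}

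Your proposal is therefore a correct proof of an amended theorem: $(I_1)(ii)$ with the supremum inside the mean, plus $\sup_{\lambda\ge0}|\xi(\lambda)/\xi(\lambda+\tau)|<\infty$. The statement as written has no proof. Present these two conditions explicitly as added hypotheses, not as readings of the given ones.
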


\section{Weighted pseudo $\emph{S}$-asymptotically $(\xi,\tau,a)$-periodic solutions for Syst. (\ref{equation2})}

By applying the properties of weighted pseudo $\emph{S}$-asymptotically $(\xi,\tau,a)$-periodic functions introduced in Section 3,
we proceed to analyze the existence and uniqueness of such pseudo periodic mild solutions to the fractional evolution system (\ref{equation2}).

\begin{Theorem}\label{hdjkhd}
Suppose the family $\{\xi(\lambda)\lambda^{\tilde{\iota}-1}\Psi_2(\lambda)\}_{\lambda> 0}$ is strongly continuous, and that there exists a function $\mathfrak{R}(\lambda)\in L^1([0,\infty),[0,\infty))$ such that
 $\left\|\xi(\lambda)\lambda^{\tilde{\iota}-1}\Psi_2(\lambda)\right\|\leq \mathfrak{R}(\lambda)$ for almost every $\lambda> 0$. Further suppose  $\xi\in \widetilde{\mathbb{U}}$, $w_r\sim w$ and $|\xi_r|\sim |\xi|$ for all $r> 0$. If
$\varrho\in C([0,\infty)\times \mathbb{E},\mathbb{E})$ satisfies condition $(I_1)$ and the Lipschitz condition
$$\|\varrho(\lambda,\iota_1)-\varrho(\lambda,\iota_2)\|\leq K\|\iota_1-\iota_2\|,\ \forall  \iota_i\in \mathbb{E}\ (i=1,2),\ {\rm{a.e.}}\ \lambda\geq 0,$$
with the positive Lipschitz constant $K$ satisfying
 $$K<\frac{1}{\mathcal{M}\|\mathfrak{R}\|_{L^1((0,\infty),[0,\infty))}},$$ then Syst. $(\ref{equation2})$ admits a unique $w$-pseudo $S$-asymptotically $(g,\tau,a)$-periodic mild solution.
\end{Theorem}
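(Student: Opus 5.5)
We plan to apply the Banach contraction principle to the solution operator
$$(\mathcal{F}\nu)(\lambda)=\Psi_1(\lambda)\nu_0+\int_0^\lambda(\lambda-\theta)^{\tilde{\iota}-1}\Psi_2(\lambda-\theta)\varrho(\theta,\nu(\theta))d\theta,\quad \lambda\ge 0,$$
on the space ${\rm{PSAP}}_{\xi,\tau,a}([0,\infty),\mathbb{E},w)$ with norm $\|\cdot\|_{b\xi}$. This space is Banach by the half-line analogue of the completeness theorem of Section 3, which needs only $|\xi_r|\sim|\xi|$. A fixed point of $\mathcal{F}$ is then, by definition, the unique $w$-pseudo $S$-asymptotically $(\xi,\tau,a)$-periodic mild solution.

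\textbf{Step 1: $\mathcal{F}$ maps the space into itself.} Take $\nu\in{\rm{PSAP}}_{\xi,\tau,a}([0,\infty),\mathbb{E},w)$.

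First, $\varrho$ satisfies $(I_1)$ and the Lipschitz condition, so the half-line composition result, Theorem \ref{djdk11} with $\mathbb{F}=\mathbb{E}$, gives $\varrho(\cdot,\nu(\cdot))\in{\rm{PSAP}}_{\xi,\tau,a}([0,\infty),\mathbb{E},w)$.

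Next, set $\mathcal{T}(\lambda)=\lambda^{\tilde{\iota}-1}\Psi_2(\lambda)$. By hypothesis $\xi\mathcal{T}$ is strongly continuous and dominated by $\mathfrak{R}\in L^1$, so $\xi\mathcal{T}\in L^1([0,\infty),\mathcal{B}(\mathbb{E}))$. Together with $\xi\in\widetilde{\mathbb{U}}$, $w_r\sim w$ and $|\xi_r|\sim|\xi|$, Theorem \ref{dhjd} applies and shows the convolution term lies in the space.

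Finally, the term $\Psi_1(\cdot)\nu_0$ must be handled separately. We must verify that $\sup_\lambda\|\xi(\lambda)\Psi_1(\lambda)\nu_0\|<\infty$ and that
$$\frac{1}{\sigma(\ell,w)}\int_0^\ell\|\xi(\lambda)[\Psi_1(\lambda+\tau)-a\Psi_1(\lambda)]\nu_0\|w(\lambda)d\lambda\to0 .$$
I expect this to be the main obstacle, because the stated hypotheses constrain only $\Psi_2$, not $\Psi_1$. I would first try to control $\Psi_1$ through its subordination representation and the kernel $\vartheta_{\tilde{\iota}}$, with the goal of showing $\xi(\lambda)\Psi_1(\lambda)\nu_0\to0$ as $\lambda\to\infty$; this gives boundedness and the averaged limit, since $\sigma(\ell,w)\to\infty$ is implicit for nontrivial weights. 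If that cannot be derived, I would treat it as an implicit standing assumption of the same type as \eqref{dnkfhfkhis2}, namely $\|\xi(\lambda)\Psi_1(\lambda)\|\to0$ or at least that $\xi\Psi_1\nu_0$ is bounded with vanishing weighted mean of the $(\tau,a)$-defect. Linearity (Lemma \ref{ddj}(1) on the half-line) then puts $\mathcal{F}\nu$ in the space.

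\textbf{Step 2: $\mathcal{F}$ is a contraction.} For $\nu_1,\nu_2$ in the space, the $\Psi_1$ terms cancel, so
$$\|\xi(\lambda)[(\mathcal{F}\nu_1)(\lambda)-(\mathcal{F}\nu_2)(\lambda)]\|\le\int_0^\lambda|\xi(\lambda)|\,\|\mathcal{T}(\lambda-\theta)\|\,K\|\nu_1(\theta)-\nu_2(\theta)\|d\theta .$$
Use $\xi\in\widetilde{\mathbb{U}}$ in the form
$$|\xi(\lambda)|\le\mathcal{M}|\xi(\lambda-\theta)|\,|\xi(\theta)| .$$
Then $|\xi(\lambda-\theta)|\,\|\mathcal{T}(\lambda-\theta)\|\le\mathfrak{R}(\lambda-\theta)$, and $|\xi(\theta)|\,\|\nu_1(\theta)-\nu_2(\theta)\|\le\|\nu_1-\nu_2\|_{b\xi}$. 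This yields
$$\|\mathcal{F}\nu_1-\mathcal{F}\nu_2\|_{b\xi}\le K\mathcal{M}\|\mathfrak{R}\|_{L^1}\|\nu_1-\nu_2\|_{b\xi},$$
and the smallness assumption on $K$ makes this a strict contraction.

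\textbf{Step 3: conclusion.} Banach's fixed point theorem gives a unique fixed point of $\mathcal{F}$ in the space, which is the unique $w$-pseudo $S$-asymptotically $(\xi,\tau,a)$-periodic mild solution. Continuity of this solution on $[0,\infty)$ follows from the strong continuity hypotheses, which is part of why they are assumed.
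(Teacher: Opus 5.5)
Your proposal follows the paper's proof step for step: the same solution operator on $({\rm PSAP}_{\xi,\tau,a}([0,\infty),\mathbb{E},w),\|\cdot\|_{b\xi})$, well-definedness via Theorems \ref{dhjd} and \ref{djdk11}, and the same bound $|\xi(\lambda)|\le\mathcal{M}|\xi(\lambda-\theta)|\,|\xi(\theta)|$ giving the contraction constant $K\mathcal{M}\|\mathfrak{R}\|_{L^1}$. The issue you raise about $\Psi_1(\cdot)\nu_0$ is genuine and the paper passes over it (its well-definedness argument cites only the convolution and composition theorems), so the right way to close that step is to state $\Psi_1(\cdot)\nu_0\in{\rm PSAP}_{\xi,\tau,a}([0,\infty),\mathbb{E},w)$ as an explicit hypothesis, rather than expect to extract it from the assumptions on $\Psi_2$; this hypothesis holds, e.g., in the setting of Remark \ref{hddjlak} when $\sigma(\ell,w)\to\infty$.
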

\begin{proof}
Drawing on Eq. $(\ref{dnkfhfkhis1})$ along with Theorems \ref{dhjd} and \ref{djdk11}, we construct an operator $\mathcal{Q}$ on the Banach space $\left({\rm{PSAP}}_{\xi,\tau,a}([0,\infty),\mathbb{E},w),\|\cdot\|_{b\xi}\right)$,
which takes the form:
\begin{equation}\label{ddjkdj}
(\mathcal{Q}\nu)(\lambda)=\displaystyle{\Psi_1(\lambda)\nu_0+\int_0^\lambda(\lambda-\theta)^{\tilde{\iota}-1}\Psi_2(\lambda-\theta)\varrho(\theta,\nu(\theta))d\theta,\ \lambda\geq 0.}
\end{equation}
Now take any two functions $\nu,\ \tilde{\nu}\in {\rm{PSAP}}_{\xi,\tau,a}([0,\infty),\mathbb{E},w)$. Using the property $\xi\in \widetilde{\mathbb{U}}$, we derive the following norm bound:
\begin{eqnarray*}
&&\left\|\xi(\lambda)[\mathcal{Q}(\nu)(\lambda)-\mathcal{Q}(\tilde{\nu})(\lambda)]\right\|\\
&\leq&\int_0^\lambda\left\|\xi(\lambda)(\lambda-\theta)^{\tilde{\iota}-1}\Psi_2(\lambda-\theta)[\varrho(\theta,\nu(\theta))-\varrho(\theta,\tilde{\nu}(\theta))]\right\|d\theta\\
&\leq&\int_0^\lambda K\mathcal{M}\left\|\xi(\lambda-\theta)(\lambda-\theta)^{\tilde{\iota}-1}\Psi_2(\lambda-\theta)\xi(\theta)[\nu(\theta)-\tilde{\nu}(\theta)]\right\|d\theta\\
&\leq&K\mathcal{M}\|\nu-\tilde{\nu}\|_{b\xi}\int_0^\lambda \mathfrak{R}(\lambda-\theta)d\theta\\
&=&K\mathcal{M}\|\mathfrak{R}\|_{L^1((0,\infty),[0,\infty))}\|\nu-\tilde{\nu}\|_{b\xi}.
\end{eqnarray*}
It follows immediately that
$$\|\mathcal{Q}\nu-\mathcal{Q}\tilde{\nu}\|_{b\xi}\leq K\mathcal{M}\|\mathfrak{R}\|_{L^1((0,\infty),[0,\infty))}\|\nu-\tilde{\nu}\|_{b\xi},$$
which confirms that $\mathcal{Q}$ is a contraction mapping. The proof of the theorem is then completed by  using the Banach contraction principle.
\end{proof}
\begin{Remark}\label{hddjlak}
Let the semigroup $\{\Psi(\lambda)\}_{\lambda\geq 0}$ be uniformly bounded, and let $\xi(\lambda)=a^\uparrow(-\lambda)$ with $|a|>1$.  One can readily verify the existence of  $\mathfrak{R}(\lambda)$ (as it appears in Theorem \ref{hdjkhd}) directly from (\ref{dnkhfkhis3}).
\end{Remark}

\begin{Theorem}
Assume the family $\{\xi(\lambda)\lambda^{\tilde{\iota}-1}\Psi_2(\lambda)\}_{\lambda> 0}$ is strongly continuous, and that there exists a nonnegative  nonincreasing function $\mathfrak{R}(\lambda)$ defined
on $[0,\infty)$ such that
 $$\left\|\xi(\lambda)\lambda^{\tilde{\iota}-1}\Psi_2(\lambda)\right\|\leq \mathfrak{R}(\lambda)$$ holds for almost every $\lambda>0$, with the infinite series satisfying  $$\displaystyle{\sum_{n=0}^\infty \mathfrak{R}(n)<\infty}.$$ Also assume  $w_r\sim w$, $|\xi_r|\sim |\xi|$ for all $r\in \mathbb{R}$ and $\xi\in \widetilde{\mathbb{U}}$. Suppose
$\varrho\in C([0,\infty)\times \mathbb{E},\mathbb{E})$ satisfies condition $(I_1)$ and there exists a nonnegative, locally integrable function  $K(\lambda)$ on $[0,\infty)$ such that
\begin{equation}\label{bdcmsb}
\left\|\xi(\lambda)\left[\varrho\left(\lambda,\frac{1}{\xi(\lambda)}\iota_1\right)-\varrho\left(\lambda,\frac{1}{\xi(\lambda)}\iota_2\right)\right]\right\|\leq K(\lambda)\|\iota_1-\iota_2\|,\ \forall \iota_i\in \mathbb{E}\ (i=1,2),\ {\rm{a.e.}}\ \lambda\geq 0,
\end{equation}
with $\displaystyle{\sup_{\lambda\geq 0}\int_\lambda^{\lambda+1}K(\theta)d\theta}<\infty$. Let $\mathcal{R}=\displaystyle{\sum_{n=0}^\infty \mathfrak{R}(n)}$ and $K_1
=\displaystyle{\sup_{\lambda\geq 0}\int_\lambda^{\lambda+1}K(\theta)d\theta}$. If $\mathcal{M}\mathcal{R}K_1<1$, then there exists a unique $w$-pseudo $S$-asymptotically $(\xi,\tau,a)$-periodic mild solution for Syst. $(\ref{equation2})$.
\end{Theorem}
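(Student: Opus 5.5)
We follow the scheme of the proof of Theorem \ref{hdjkhd}: define the operator $\mathcal{Q}$ by (\ref{ddjkdj}) on the Banach space $\left({\rm{PSAP}}_{\xi,\tau,a}([0,\infty),\mathbb{E},w),\|\cdot\|_{b\xi}\right)$ and show that it is a well-defined contraction. The new ingredients are the time-dependent Lipschitz function $K(\lambda)$ and the nonincreasing majorant $\mathfrak{R}$. The contraction estimate is then obtained by splitting the convolution into unit intervals, so that only $\sum_n\mathfrak{R}(n)$ and $K_1$ enter, instead of $\|\mathfrak{R}\|_{L^1}$ and a constant $K$.

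\textbf{Step 1: invariance.} For $\nu\in{\rm{PSAP}}_{\xi,\tau,a}([0,\infty),\mathbb{E},w)$ we must check that $\mathcal{Q}\nu$ lies in the same space.
\begin{itemize}
\item[(a)] The term $\Psi_1(\cdot)\nu_0$ has to be handled as in Theorem \ref{hdjkhd}; we assume, as there, that it belongs to the space.
\item[(b)] For the Nemytskii term $\varrho(\cdot,\nu(\cdot))$, condition (\ref{bdcmsb}) says exactly that $\mathcal{H}(\lambda,y)=\xi(\lambda)\varrho(\lambda,y/\xi(\lambda))$ is Lipschitz with coefficient $K(\lambda)$. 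This is the $[0,\infty)$-analogue of condition $(J)$ of Corollary \ref{dhjkdkd}, so we must verify $K\in\mathcal{B}_w$. The hypothesis $\sup_\lambda\int_\lambda^{\lambda+1}K<\infty$ gives $K$ uniformly bounded in the Stepanov sense. If $w$ is bounded above and below (or, more generally, using $w_r\sim w$ to compare $w$ on unit intervals), $\frac{1}{\sigma(\ell,w)}\int_0^\ell Kw$ stays bounded. Hence $\varrho(\cdot,\nu(\cdot))\in{\rm{PSAP}}_{\xi,\tau,a}([0,\infty),\mathbb{E},w)$.
\item[(c)] The convolution term with kernel $\mathcal{T}(\lambda)=\lambda^{\tilde{\iota}-1}\Psi_2(\lambda)$ is covered by Theorem \ref{dhjd}. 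Its hypothesis $\xi\mathcal{T}\in L^1$ holds because $\mathfrak{R}$ is nonincreasing, so $\int_0^\infty\mathfrak{R}\le\sum_{n\ge0}\mathfrak{R}(n)=\mathcal{R}$.
\end{itemize}

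\textbf{Step 2: contraction estimate.} For $\nu,\tilde\nu$ in the space, we use $\xi\in\widetilde{\mathbb{U}}$ to write $|\xi(\lambda)|\le\mathcal{M}|\xi(\lambda-\theta)||\xi(\theta)|$. Applying (\ref{bdcmsb}) with $\iota_1=\xi(\theta)\nu(\theta)$ and $\iota_2=\xi(\theta)\tilde\nu(\theta)$ gives
$$\|\xi(\lambda)[\mathcal{Q}\nu-\mathcal{Q}\tilde\nu](\lambda)\|\le\mathcal{M}\|\nu-\tilde\nu\|_{b\xi}\int_0^\lambda\mathfrak{R}(\lambda-\theta)K(\theta)\,d\theta.$$
Split $[0,\lambda]$ into the intervals $[\lambda-n-1,\lambda-n]\cap[0,\lambda]$, $n=0,1,\dots$. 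On the $n$-th interval $\lambda-\theta\ge n$, so by monotonicity $\mathfrak{R}(\lambda-\theta)\le\mathfrak{R}(n)$, and $\int K\le K_1$ there. Summing over $n$ yields $\int_0^\lambda\mathfrak{R}(\lambda-\theta)K(\theta)\,d\theta\le K_1\mathcal{R}$. Therefore
$$\|\mathcal{Q}\nu-\mathcal{Q}\tilde\nu\|_{b\xi}\le\mathcal{M}\mathcal{R}K_1\|\nu-\tilde\nu\|_{b\xi},$$
and since $\mathcal{M}\mathcal{R}K_1<1$, the Banach contraction principle gives a unique fixed point, i.e.\ the desired mild solution.

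The main obstacle is Step 1(b): showing that the Stepanov-type bound on $K$ suffices for the composition result, i.e.\ that $K\in\mathcal{B}_w$. For unbounded weights this is not automatic. The first attempt would be to bound $\int_0^\ell Kw$ by $\sum_{j<\ell}\sup_{[j,j+1]}w\cdot K_1$ and compare this sum with $\sigma(\ell,w)$ using $w_r\sim w$ for $r\in[0,1]$. If that fails, we would instead redo the $\mathbb{J}_2$ estimate from the proof of Theorem \ref{dndk} directly, using Lemma \ref{jdkkd} together with the boundedness of $\xi\varrho$ from $(I_1)$.
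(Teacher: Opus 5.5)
This is essentially the paper's proof. It uses the same operator $\mathcal{Q}$ and the same observation $\int_0^\infty\mathfrak{R}\le\mathcal{R}$ to invoke Theorem \ref{dhjd}. Its contraction estimate is the same unit-interval splitting, with monotonicity of $\mathfrak{R}$ giving the constant $\mathcal{M}\mathcal{R}K_1$.

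The step you single out, 1(b), is where the paper is weakest. It simply cites Corollary \ref{vnwkejj}, but that corollary needs $K\in L^q(\mathbb{R},[0,\infty),w)$, which does not follow from $K_1<\infty$ (take $K\equiv 1$, $w\equiv 1$). The paper is also silent about $\Psi_1(\cdot)\nu_0$, so your Step 1(a) assumption matches what it implicitly does. Your route through condition $(J)$ of Corollary \ref{dhjkdkd} is therefore the sounder one, and your first attempt does close it:
\begin{itemize}
\item The proof of Theorem \ref{dndk} only uses $\sup_{\ell}\sigma(\ell,w)^{-1}\int_0^\ell Kw<\infty$, not the existence of the limit in $\mathcal{B}_w$.
\item The constants in $w_r\sim w$ depend on $r$, so you need them uniform for $|r|\le 1$. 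A Steinhaus argument gives this. The sets $\{r:\ n^{-1}w\le w_r\le nw \text{ a.e.}\}$ are measurable by Tonelli and exhaust $\mathbb{R}$, so one of them meets $[-1,1]$ in positive measure. Its difference set contains an interval, and iterating yields a single $C$ with $C^{-1}w\le w_r\le Cw$ a.e.\ for all $|r|\le 1$.
\item Fubini then gives $w(\lambda)\le\frac{C}{2}\int_{\lambda-1}^{\lambda+1}w$ for a.e.\ $\lambda$. Hence $\int_j^{j+1}Kw\le\frac{C}{2}K_1\int_{j-1}^{j+2}w$.
\item Summing over $j$ bounds the weighted means of $K$; the end pieces are controlled by the same shift bounds.
\end{itemize}
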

\begin{proof}
By the fact $\displaystyle{\sum_{n=0}^\infty \mathfrak{R}(n)<\infty}$ and the noninreasing property of $\mathfrak{R}(\lambda)$, one can easily see $\mathfrak{R}\in L^1([0,\infty),[0,\infty))$. By Theorem \ref{dhjd}
and Corollary \ref{vnwkejj}, we consider the operator $\mathcal{Q}$ defined on the Banach space $\left({\rm{PSAP}}_{\xi,\tau,a}([0,\infty),\mathbb{E},w),\|\cdot\|_{b\xi}\right)$ by $(\ref{ddjkdj})$.
For any $\nu,\ \tilde{\nu}\in {\rm{PSAP}}_{\xi,\tau,a}([0,\infty),\mathbb{E},w)$, we have
\begin{eqnarray*}
&&\left\|\xi(\lambda)[\mathcal{Q}(\nu)(\lambda)-\mathcal{Q}(\tilde{\nu})(\lambda)]\right\|\\
&\leq&\mathcal{M}\int_0^\lambda\left\|\xi(\lambda-\theta)(\lambda-\theta)^{\tilde{\iota}-1}\Psi_2(\lambda-\theta)\xi(\theta)[\varrho(\theta,\nu(\theta))-\varrho(\theta,\tilde{\nu}(\theta))]\right\|d\theta\\
&\leq&\mathcal{M}\int_0^\lambda K(\theta)\left\|\xi(\lambda-\theta)(\lambda-\theta)^{\tilde{\iota}-1}\Psi_2(\lambda-\theta)\right\|\left\|\xi(\theta)[\nu(\theta)-\tilde{\nu}(\theta)]\right\|d\theta\\
&\leq&\mathcal{M}\left(\sum_{n=0}^\infty \int_{\lambda-(n+1)}^{\lambda-n}K(\theta)\mathfrak{R}(\lambda-\theta)d\theta\right)\|\nu-\tilde{\nu}\|_{b\xi}\\
&\leq&\mathcal{M}\left(\sum_{n=0}^\infty \mathfrak{R}(n)\int_{\lambda-(n+1)}^{\lambda-n}K(\theta)d\theta\right)\|\nu-\tilde{\nu}\|_{b\xi}\\
&\leq&\mathcal{M}\mathcal{R}K_1\|\nu-\tilde{\nu}\|_{b\xi},
\end{eqnarray*}
meaning $\|\mathcal{Q}\nu-\mathcal{Q}\tilde{\nu}\|_{b\xi}\leq \mathcal{M}\mathcal{R}K_1\|\nu-\tilde{\nu}\|_{b\xi}.$ Since $\mathcal{M}\mathcal{R}K_1<1$,  $\mathcal{Q}$ is a contraction mapping, and hence the conclusion follows from the contraction principle.
\end{proof}

\begin{Theorem}\label{djddk}
Suppose $\{\xi(\lambda)\lambda^{\tilde{\iota}-1}\Psi_2(\lambda)\}_{\lambda\geq 0}$ is strongly continuous,
 $$\xi(\lambda)\lambda^{\tilde{\iota}-1}\Psi_2(\lambda)\in L^1([0,\infty),\mathbb{E}),$$ and  there exists a positive constant $\mathfrak{L}$ such that $$\left\|\xi(\lambda)\lambda^{\tilde{\iota}-1}\Psi_2(\lambda)\right\|\leq \mathfrak{L}$$ for ${\rm{a.e.}}$ $\lambda>0$. Also suppose  $w_r\sim w$, $|\xi_r|\sim |\xi|$ for all $r\in \mathbb{R}$ and $\xi\in \widetilde{\mathbb{U}}$. If
$\varrho\in C([0,\infty)\times \mathbb{E},\mathbb{E})$ satisfies $(I_1)$ and $(\ref{bdcmsb})$ with $K(\lambda)\in L^1([0,\infty),[0,\infty))$, then there exists a unique $w$-pseudo $S$-asymptotically $(\xi,\tau,a)$-periodic mild solution for Syst. $(\ref{equation2})$.
\end{Theorem}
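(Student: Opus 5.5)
We again work with the operator $\mathcal{Q}$ from $(\ref{ddjkdj})$ on the Banach space $\left({\rm{PSAP}}_{\xi,\tau,a}([0,\infty),\mathbb{E},w),\|\cdot\|_{b\xi}\right)$. The Lipschitz constant is now an integrable function $K(\lambda)$ rather than a constant, so $\mathcal{Q}$ need not be a contraction. The plan is therefore to show that some iterate $\mathcal{Q}^n$ is a contraction, or equivalently to pass to an equivalent Bielecki-type weighted norm, and then conclude with the generalized Banach contraction principle.

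\textbf{Step 1: $\mathcal{Q}$ is a self-map.} The convolution kernel $\mathcal{T}(\lambda)=\lambda^{\tilde{\iota}-1}\Psi_2(\lambda)$ satisfies the hypotheses of Theorem \ref{dhjd}, because $\xi(\lambda)\mathcal{T}(\lambda)$ is strongly continuous and lies in $L^1$. By Corollary \ref{vnwkejj} with $q=1$ (using $L^1\subseteq\mathcal{B}_w$), together with $(I_1)$ and $(\ref{bdcmsb})$, the map $\theta\mapsto\varrho(\theta,\nu(\theta))$ belongs to ${\rm{PSAP}}_{\xi,\tau,a}([0,\infty),\mathbb{E},w)$ whenever $\nu$ does. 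Hence the integral term of $\mathcal{Q}\nu$ is in the space. The term $\Psi_1(\cdot)\nu_0$ is treated exactly as in the preceding two theorems: it is taken for granted that it belongs to the space, under the same standing hypotheses used there.

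\textbf{Step 2: the key pointwise estimate.} Using $\xi\in\widetilde{\mathbb{U}}$ as before, for $\nu,\tilde\nu$ in the space and $\lambda\ge0$,
$$\|\xi(\lambda)[\mathcal{Q}\nu(\lambda)-\mathcal{Q}\tilde\nu(\lambda)]\|\le \mathcal{M}\mathfrak{L}\int_0^\lambda K(\theta)\|\xi(\theta)[\nu(\theta)-\tilde\nu(\theta)]\|d\theta .$$
Set $\Phi(\lambda)=\int_0^\lambda K(\theta)d\theta\le\|K\|_{L^1}$. Iterating this estimate and inducting on $n$ yields
$$\|\xi(\lambda)[\mathcal{Q}^n\nu(\lambda)-\mathcal{Q}^n\tilde\nu(\lambda)]\|\le \frac{(\mathcal{M}\mathfrak{L}\Phi(\lambda))^n}{n!}\|\nu-\tilde\nu\|_{b\xi}.$$
The inductive step consists of the identity $\int_0^\lambda K(\theta)\Phi(\theta)^{n-1}d\theta=\Phi(\lambda)^n/n$, which holds because $\Phi$ is absolutely continuous with $\Phi'=K$ almost everywhere.

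\textbf{Step 3: conclusion.} From Step 2, $\|\mathcal{Q}^n\nu-\mathcal{Q}^n\tilde\nu\|_{b\xi}\le \frac{(\mathcal{M}\mathfrak{L}\|K\|_{L^1})^n}{n!}\|\nu-\tilde\nu\|_{b\xi}$. For $n$ large this coefficient is less than $1$, so $\mathcal{Q}^n$ is a contraction. Consequently $\mathcal{Q}$ has a unique fixed point, and this fixed point is the unique $w$-pseudo $S$-asymptotically $(\xi,\tau,a)$-periodic mild solution.

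The main obstacle is Step 2. One must make sure the factor $\mathcal{M}$ from $\widetilde{\mathbb{U}}$ enters only once per iteration. One must also make sure the hypothesis $(\ref{bdcmsb})$ is applied to $\iota_j=\xi(\theta)\nu(\theta)$ and $\xi(\theta)\tilde\nu(\theta)$, so that the $\xi(\theta)$ weight is absorbed correctly. An alternative that avoids iteration is to use the equivalent norm $\sup_\lambda e^{-\gamma\Phi(\lambda)}\|\xi(\lambda)\nu(\lambda)\|$; with $\gamma>\mathcal{M}\mathfrak{L}$ this makes $\mathcal{Q}$ itself a contraction. This norm is equivalent to $\|\cdot\|_{b\xi}$ because $\Phi$ is bounded.
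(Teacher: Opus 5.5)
Your argument is correct and is essentially the paper's proof. It uses the same pointwise bound $\|\xi(\lambda)[\mathcal{Q}\nu(\lambda)-\mathcal{Q}\tilde\nu(\lambda)]\|\le\mathcal{M}\mathfrak{L}\int_0^\lambda K(\theta)\|\xi(\theta)[\nu(\theta)-\tilde\nu(\theta)]\|\,d\theta$ from $\xi\in\widetilde{\mathbb{U}}$ and $(\ref{bdcmsb})$, the same induction giving the factor $(\mathcal{M}\mathfrak{L}\|K\|_{L^1})^n/n!$, and the same conclusion that an iterate $\mathcal{Q}^n$ is a contraction; your Bielecki-norm alternative is also valid (Lipschitz constant $\mathcal{M}\mathfrak{L}/\gamma$) and parallels the weighted norm the paper uses in the next theorem.
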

\begin{proof}
Consider again the operator $\mathcal{Q}:\ {\rm{PSAP}}_{\xi,\tau,a}([0,\infty),\mathbb{E},w)\rightarrow {\rm{PSAP}}_{\xi,\tau,a}([0,\infty),\mathbb{E},w)$ defined by $(\ref{ddjkdj})$. The well-definedness of the operator $\mathcal{Q}$ can be guaranteed by Theorem \ref{dhjd}
and Corollary \ref{vnwkejj}. For any $\nu,\ \tilde{\nu}\in {\rm{PSAP}}_{\xi,\tau,a}([0,\infty),\mathbb{E},w)$, we have
\begin{eqnarray*}
&&\left\|\xi(\lambda)[\mathcal{Q}(\nu)(\lambda)-\mathcal{Q}(\tilde{\nu})(\lambda)]\right\|\\
&\leq&\mathcal{M}\int_0^\lambda\left\|\xi(\lambda-\theta)(\lambda-\theta)^{\tilde{\iota}-1}\Psi_2(\lambda-\theta)K(\theta)\xi(\theta)[\nu(\theta)-\tilde{\nu}(\theta)]\right\|d\theta\\
&\leq&\mathcal{M}\mathfrak{L}\int_0^\lambda K(\theta)d\theta\|\nu-\tilde{\nu}\|_{b\xi}.
\end{eqnarray*}
By induction, we can obtain for any $n\in \mathbb{N}_+$,
\begin{eqnarray*}
&&\left\|\xi(\lambda)[(\mathcal{Q}^n\nu)(\lambda)-(\mathcal{Q}^n\tilde{\nu})(\lambda)]\right\|\\
&\leq& \frac{(\mathcal{M}\mathfrak{L})^n}{(n-1)!}\left(\int_0^\lambda K(\theta)\left(\int_0^\theta K(\sigma)d\sigma\right)^{n-1}d\theta\right)\|\nu-\tilde{\nu}\|_{b\xi}\\
&\leq& \frac{1}{n!}\left(\mathcal{M}\mathfrak{L}\|K\|_{L^1([0,\infty),[0,\infty))}\right)^n\|\nu-\tilde{\nu}\|_{b\xi}.
\end{eqnarray*}
So,
$$\|\mathcal{Q}^n\nu-\mathcal{Q}^n\tilde{\nu}\|_{b\xi}\leq \frac{1}{n!}\left(\mathcal{M}\mathfrak{L}\|K\|_{L^1([0,\infty),[0,\infty))}\right)^n\|\nu-\tilde{\nu}\|_{b\xi}.$$
For $n$ large enough, the fact $\frac{1}{n!}\left(\mathcal{M}\mathfrak{L}\|K\|_{L^1([0,\infty),[0,\infty))}\right)^n<1$ ensures that $\mathcal{Q}$ has a unique fixed point
 $\nu\in {\rm{PSAP}}_{\xi,\tau,a}([0,\infty),\mathbb{E},w)$. We have thus finished the proof of this theorem.
\end{proof}

\begin{Theorem}
Suppose $\{\xi(\lambda)\lambda^{\tilde{\iota}-1}\Psi_2(\lambda)\}_{\lambda\geq 0}$ is strongly continuous and uniformly exponential stable. Also suppose  $w_r\sim w$, $|\xi_r|\sim |\xi|$ for all $r\in \mathbb{R}$ and $\xi\in \widetilde{\mathbb{U}}$. If
$\varrho\in C([0,\infty)\times \mathbb{E},\mathbb{E})$ satisfies $(I_1)$ and $(\ref{bdcmsb})$
with $K(\lambda)\in L^q([0,\infty),[0,\infty)),\ q\in[1,\infty)$, then there exists a unique $w$-pseudo $S$-asymptotically $(\xi,\tau,a)$-periodic mild solution for Syst. $(\ref{equation2})$.
\end{Theorem}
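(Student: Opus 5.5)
We reduce the statement to Theorem \ref{djddk}. The interpretation we adopt is the natural one: "uniformly exponentially stable" means there are constants $M\ge 1$ and $\alpha>0$ with
$$\left\|\xi(\lambda)\lambda^{\tilde{\iota}-1}\Psi_2(\lambda)\right\|\leq M{\rm e}^{-\alpha\lambda}\quad \text{for a.e. } \lambda>0.$$
With this reading the kernel $\xi(\lambda)\lambda^{\tilde{\iota}-1}\Psi_2(\lambda)$ is both integrable on $[0,\infty)$, with $L^1$ norm at most $M/\alpha$, and essentially bounded by $\mathfrak{L}:=M$. These are exactly the kernel hypotheses of Theorem \ref{djddk}. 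Strong continuity, the conditions $w_r\sim w$, $|\xi_r|\sim|\xi|$, $\xi\in\widetilde{\mathbb{U}}$, and $(I_1)$ are assumed directly. The only difference from Theorem \ref{djddk} is that the Lipschitz-type function satisfies $K\in L^q$ instead of $K\in L^1$.

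\textbf{Well-definedness.} The operator $\mathcal{Q}$ from (\ref{ddjkdj}) maps ${\rm PSAP}_{\xi,\tau,a}([0,\infty),\mathbb{E},w)$ into itself, by the same argument as in Theorem \ref{djddk}. The convolution term is handled by Theorem \ref{dhjd} applied with $\mathcal{T}(\lambda)=\lambda^{\tilde{\iota}-1}\Psi_2(\lambda)$. The Nemytskii term $\varrho(\cdot,\nu(\cdot))$ is handled by Corollary \ref{vnwkejj}, which already allows $K\in L^q$ (using $L^q\subseteq\mathcal{B}_w$, in its half-line version). The term $\Psi_1(\lambda)\nu_0$ is treated exactly as in the earlier theorems.

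\textbf{Contraction estimate.} We use $\xi\in\widetilde{\mathbb{U}}$ to write $|\xi(\lambda)|\le\mathcal{M}|\xi(\lambda-\theta)||\xi(\theta)|$. Combined with (\ref{bdcmsb}), this gives
$$\left\|\xi(\lambda)[(\mathcal{Q}\nu)(\lambda)-(\mathcal{Q}\tilde\nu)(\lambda)]\right\|\le \mathcal{M}M\int_0^\lambda {\rm e}^{-\alpha(\lambda-\theta)}K(\theta)\,d\theta\,\|\nu-\tilde\nu\|_{b\xi}.$$
The main obstacle is that $K\in L^q$ with $q>1$ need not be integrable, so the Volterra iteration of Theorem \ref{djddk} cannot be used verbatim. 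We instead iterate with the exponential kernel kept inside the integral. We prove by induction that
$$\left\|\xi(\lambda)[(\mathcal{Q}^n\nu)(\lambda)-(\mathcal{Q}^n\tilde\nu)(\lambda)]\right\|\le (\mathcal{M}M)^n\, \frac{1}{n!}\Big(\int_0^\lambda {\rm e}^{-\alpha(\lambda-\theta)/n}K(\theta)\,d\theta\Big)^n \|\nu-\tilde\nu\|_{b\xi}.$$
If this induction is awkward, the fallback is the simpler step-by-step bound $\sup_\lambda\int_0^\lambda {\rm e}^{-\alpha(\lambda-\theta)}K(\theta)\,d\theta\le \|{\rm e}^{-\alpha\cdot}\|_{L^{p}}\|K\|_{L^q}$, with $1/p+1/q=1$, obtained from H\"older's inequality.

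\textbf{Conclusion.} The fallback gives a uniform constant $C_0$, but by itself it yields a contraction only when $\mathcal{M}MC_0<1$. To avoid that smallness condition we split $K=K\chi_{[0,T]}+K\chi_{(T,\infty)}$. For $T$ large the $L^q$-tail $\|K\chi_{(T,\infty)}\|_{L^q}$ is small. The head $K\chi_{[0,T]}$ is in $L^1$ by H\"older on the bounded interval $[0,T]$. The tail contributes a contraction factor less than $1$, and the head is handled by the factorial iteration of Theorem \ref{djddk}. Combining the two pieces, some iterate $\mathcal{Q}^n$ is a contraction on $\left({\rm PSAP}_{\xi,\tau,a}([0,\infty),\mathbb{E},w),\|\cdot\|_{b\xi}\right)$. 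The Banach fixed point theorem for iterates then gives the unique mild solution. For $q=1$ the result is Theorem \ref{djddk} directly.
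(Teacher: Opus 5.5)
Your reduction of the case $q=1$ to Theorem \ref{djddk} and your well-definedness argument are fine. So is your pointwise bound $\|\xi(\lambda)[(\mathcal{Q}\nu)(\lambda)-(\mathcal{Q}\tilde\nu)(\lambda)]\|\le (A\phi)(\lambda)$, where $(A\phi)(\lambda)=\mathcal{M}M\int_0^\lambda e^{-\alpha(\lambda-\theta)}K(\theta)\phi(\theta)\,d\theta$ and $\phi=\|\xi(\cdot)[\nu(\cdot)-\tilde\nu(\cdot)]\|$; it is the same estimate the paper uses.

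\textbf{The gap.} The decisive sentence ``combining the two pieces, some iterate $\mathcal{Q}^n$ is a contraction'' is asserted without the reason that makes it true, and it is not automatic. A map of norm $\delta<1$ plus a quasi-nilpotent map need not have a contractive power: on $\mathbb{C}^2$, $\begin{pmatrix}0&N\\0&0\end{pmatrix}+\begin{pmatrix}0&0\\ \delta&0\end{pmatrix}$ has eigenvalues $\pm\sqrt{N\delta}$.

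The missing observation is causality. Write $A=A_1+A_2$, with $A_1$ built from $K\chi_{[0,T]}$ and $A_2$ from $K\chi_{(T,\infty)}$. The function $A_2\phi$ vanishes on $[0,T]$, while $A_1\phi$ depends only on $\phi|_{[0,T]}$. Hence $A_1A_2=0$ and $A^n=\sum_{k=0}^nA_2^kA_1^{n-k}$. H\"older on the tail gives $\|A_2\|\le\delta<1$ for $T$ large. The Volterra iteration of Theorem \ref{djddk} gives $\|A_1^j\|\le b^j/j!$ with $b=\mathcal{M}M\|K\chi_{[0,T]}\|_{L^1}$. Together these yield $\|A^n\|\le\delta^n\sum_{j=0}^n(b/\delta)^j/j!\le\delta^n e^{b/\delta}\to0$. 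Positivity of $A$ then gives $\|\mathcal{Q}^n\nu-\mathcal{Q}^n\tilde\nu\|_{b\xi}\le\|A^n\|\,\|\nu-\tilde\nu\|_{b\xi}$, which closes the argument. You need to state this.

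\textbf{Your first idea works on its own.} The bound you wanted to prove by induction is true. It is easiest to prove not by induction but by writing $A^n1$ as an integral over the simplex $0<s_n<\dots<s_1<\lambda$. There the kernel is $e^{-\alpha(\lambda-s_n)}\le\prod_{i=1}^n e^{-\alpha(\lambda-s_i)/n}$, because $s_n\le\frac1n\sum_i s_i$; symmetrizing then supplies the factor $1/n!$. With $1/p+1/q=1$ and $C=\mathcal{M}M\|K\|_{L^q}(\alpha p)^{-1/p}$, H\"older gives $\|A^n\|\le C^n n^{n/p}/n!\le (eC)^n n^{-n/q}\to0$. This finishes the proof with no splitting at all.

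\textbf{Comparison with the paper.} The paper does not iterate. For $1<q<\infty$ it passes to the Bielecki-type norm $\|\nu\|_{b\xi;\omega}=\sup_{\lambda\ge0}\|e^{-\beta\omega(\lambda)}\xi(\lambda)\nu(\lambda)\|$, with $\omega(\lambda)=\int_0^\lambda K^q(\theta)\,d\theta$. This norm is equivalent to $\|\cdot\|_{b\xi}$ because $0\le\omega\le\|K\|_{L^q}^q$. H\"older together with $\int_0^\lambda K^q e^{q\beta\omega}\,d\theta=(e^{q\beta\omega(\lambda)}-1)/(q\beta)$ makes $\mathcal{Q}$ itself a contraction, with constant of order $\beta^{-1/q}$. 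That route is shorter and gives a one-step contraction in an equivalent norm. Yours stays in the original norm and gives an explicit decay rate for $\|\mathcal{Q}^n\nu-\mathcal{Q}^n\tilde\nu\|_{b\xi}$, at the cost of the combination step above.
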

\begin{proof}
Similar to previous theorems, we consider the operator $$\mathcal{Q}:\ {\rm{PSAP}}_{\xi,\tau,a}([0,\infty),\mathbb{E},w)\rightarrow {\rm{PSAP}}_{\xi,\tau,a}([0,\infty),\mathbb{E},w)$$ defined by
$(\ref{ddjkdj})$. The family $\{\xi(\lambda)\lambda^{\tilde{\iota}-1}\Psi_2(\lambda)\}_{\lambda>0}$ exhibits uniform exponential stability, hence, one can find positive constants $\verb"R"$
and $\varpi$ such that $$\|\xi(\lambda)\lambda^{\tilde{\iota}-1}\Psi_2(\lambda)\|\leq \verb"R"{\rm{e}}^{-\varpi \lambda} \quad \mbox{for any} \ \ \lambda>0.$$ The integrability of the family
 $\{\xi(\lambda)\lambda^{\tilde{\iota}-1}\Psi_2(\lambda)\}_{\lambda> 0}$ on $(0,\infty)$ follows as a consequence. The well-definedness of the operator $\mathcal{Q}$ can be inferred from Theorem \ref{dhjd} and Corollary \ref{vnwkejj}.

The case $q=1$ immediately yields $\|\xi(\lambda)\lambda^{\tilde{\iota}-1}\mathbb{W}(\lambda)\|\leq \verb"R"$, enabling the proof to be carried out similarly to Theorem \ref{djddk}.

In order to consider the case $1<q<\infty$, let us introduce a modified equivalent norm $\|\cdot\|_{b\xi;\omega}$ on ${\rm{PSAP}}_{\xi,\tau,a}([0,\infty),\mathbb{E},w)$ defined as follows by
$$\|\nu\|_{b\xi;\omega}=\sup_{\lambda\geq 0}\left\|{\rm{e}}^{-\beta \omega(\lambda)}\xi(\lambda)\nu(\lambda)\right\|,\ \nu\in {\rm{PSAP}}_{\xi,\tau,a}([0,\infty),\mathbb{E},w),$$
where $\beta$ is a fixed large positive constant and $\omega(\lambda):=\displaystyle{\int_0^\lambda K^q(\theta)d\theta}$. For any $\nu,\ \tilde{\nu}\in {\rm{PSAP}}_{\xi,\tau,a}([0,\infty),\mathbb{E},w)$, we get
\begin{eqnarray*}
&&\left\|\xi(\lambda)[\mathcal{Q}(\nu)(\lambda)-\mathcal{Q}(\tilde{\nu})(\lambda)]\right\|\\
&\leq&\mathcal{M}\int_0^\lambda\left\|\xi(\lambda-\theta)(\lambda-\theta)^{\tilde{\iota}-1}\Psi_2(\lambda-\theta)g(\theta)[\varrho(\theta,\nu(\theta))-\varrho(\theta,\tilde{\nu}(\theta))]\right\|d\theta\\
&\leq&\mathcal{M}\verb"R"\int_0^\lambda {\rm{e}}^{-\varpi(\lambda-\theta)}K(\theta)\left\|\xi(\theta)[\nu(\theta)-\tilde{\nu}(\theta)]\right\|d\theta\\
&\leq&\mathcal{M}\verb"R"\int_0^\lambda {\rm{e}}^{-\varpi(\lambda-\theta)}K(\theta){\rm{e}}^{\beta \omega(\theta)} ds\left\|\nu-\tilde{\nu}\right\|_{b\xi;\omega}\\
&\leq&\mathcal{M}\verb"R"\left[\int_0^\lambda {\rm{e}}^{q\beta \omega(\theta)}K^p(\theta)d\theta\right]^{1/q}\left[\int_0^\lambda {\rm{e}}^{-\varpi \frac{q(\lambda-\theta)}{q-1}}d\theta\right]^{(q-1)/q}\left\|\nu-\tilde{\nu}\right\|_{b\xi;\omega}\\
&\leq&\mathcal{M}\verb"R"\left(\frac{q\varpi}{q-1}\right)^{(1-q)/q}\left[\int_0^\lambda {\rm{e}}^{q\beta \omega(\theta)}d\omega(\theta)\right]^{1/q}\left\|\nu-\tilde{\nu}\right\|_{b\xi;\omega}\\
&\leq&\mathcal{M}\verb"R"\left(\frac{q\varpi}{q-1}\right)^{(1-q)/q}(q\beta)^{-\frac{1}{q}}\left({\rm{e}}^{q\beta \omega(\lambda)}-1\right)^{1/q}\left\|\nu-\tilde{\nu}\right\|_{b\xi;\omega}.
\end{eqnarray*}
Hence,
$$\left\|\mathcal{Q}\nu-\mathcal{Q}\tilde{\nu}\right\|_{b\xi;\omega}\leq \mathcal{M}\verb"R"\left(\frac{q\varpi}{q-1}\right)^{(1-q)/q}(q\beta)^{-1/q}\left\|\nu-\tilde{\nu}\right\|_{b\xi;\omega}.$$
For sufficiently large $\beta$, the operator $\mathcal{Q}$ is contractive. Consequently, it admits a unique fixed point $\nu\in {\rm{PSAP}}_{\xi,\tau,a}([0,\infty),\mathbb{E},w)$ that serves as a mild solution to Syst. $(\ref{equation2})$. The proof of this theorem is thus finished.
\end{proof}

\section{$\emph{S}$-asymptotically $(\tau,a)$-periodic solutions for Syst. (\ref{equation1})}
Research on $\emph{S}$-asymptotically $(\tau,a)$-periodic solutions, as a special case of weighted pseudo $\emph{S}$-asymptotically $(\xi,\tau,a)$-periodic solutions,
for fractional evolution equations seems to have been unexplored. In this section, we will present an existence and uniqueness theorem for $\emph{S}$-asymptotically
$(\tau,a)$-periodic mild solutions to Sys. (\ref{equation1})  which extends the results in \cite{LiL1}. Before giving the main theorem, we first pose some assumptions
 for the semigroup $\{\Psi(\lambda)\}_{t\geq 0}$ and the nonlinear term $\varrho:\ [0,\infty)\times C([-\kappa,0],\mathbb{E})\rightarrow \mathbb{E}$ below:\\
$(J_1)$ The semigroup $\{\Psi(\lambda)\}_{\lambda\geq 0}$ is of type $\mathbf{T}(M,\alpha)$.\\
$(J_2)$ $\displaystyle{\sup_{\lambda\geq 0}|a|^{\uparrow}(-\lambda)\int_0^\lambda \frac{\|\varrho(\theta,0)\|}{(\lambda-\theta)^{1-\tilde{\iota}}}d\theta<\infty.}$\\
$(J_3)$ There exists $\tau>0$ such that  $\displaystyle{\lim_{\lambda\rightarrow \infty}\|\varrho(\lambda+\tau,\vartheta)-\varrho(\lambda,\vartheta)\|=0}$ for any $\vartheta\in C([-\kappa,0],\mathbb{E})$.\\
$(J_4)$ There exists a locally integrable function $L(\cdot):\ [0,\infty)\rightarrow \mathbb{E}$ such that for any $\vartheta_i\in C([-\kappa,0],\mathbb{E})\ (i=1,2)$,
\begin{center}
$\|\varrho(\lambda,\vartheta_1)-\varrho(\lambda,\vartheta_2)\|\leq L(\lambda)\|\vartheta_1-\vartheta_2\|_{[-\kappa,0]}$, $\lambda\geq 0$,
\end{center}
 and
\begin{equation}\label{nkfn}
\displaystyle{\sup_{\lambda\geq 0}|a|^{\uparrow}(-\lambda)\int_0^\lambda \frac{|a|^{\uparrow}(\theta)L(\theta)}{(\lambda-\theta)^{1-\tilde{\iota}}}d\theta}<\frac{\Gamma(\tilde{\iota})}{M}.
\end{equation}
$(J_5)$ There exists a locally integrable function $N(\cdot):\ [0,\infty)\rightarrow \mathbb{E}$  such that for any $\vartheta_i\in C([-\kappa,0],\mathbb{E})\ (i=1,2)$,
\begin{center}
$\|\varrho(\lambda,\vartheta_1)-a\varrho(\lambda,\vartheta_2)\|\leq N(\lambda)\|\vartheta_1-a\vartheta_2\|_{[-\kappa,0]}$, \ $ \lambda\geq 0$,
\end{center}
and
\begin{equation}\label{nkfn}
\displaystyle{\sup_{\lambda\geq 0}|a|^{\uparrow}(-\lambda)\int_0^\lambda \frac{|a|^{\uparrow}(\theta)N(\theta)}{(\lambda-\theta)^{1-\tilde{\iota}}}d\theta}<\infty.
\end{equation}

\begin{Theorem}\label{cnknn}
If the assumptions $(J_1)-(J_5)$ hold and $|a|\geq 1$, then there exists a unique $\emph{S}$-asymptotically $(\tau,a)$-periodic mild solution for Syst. (\ref{equation1}).
\end{Theorem}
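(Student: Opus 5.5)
The plan is a Banach fixed point argument for the solution operator defined by the mild-solution formula (\ref{equation12}). The working space is the closed subspace of $BC_{\tau,a}([-\kappa,\infty),\mathbb{E})$, with norm $\|\cdot\|_{b\tau a}$, consisting of functions $\nu$ with $\nu|_{[-\kappa,0]}=\eta$ and $\nu|_{[0,\infty)}\in{\rm SAP}_{\tau,a}([0,\infty),\mathbb{E})$. It is convenient to write $\nu=\tilde\eta+u$, where $\tilde\eta$ extends $\eta$ (for example by $\Psi_1(\lambda)\eta(0)$, which is continuous at $0$) and $u\in BC_{\tau,a,0}(\mathbb{E})$. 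The operator is
$$(\mathcal{Q}\nu)(\lambda)=\Psi_1(\lambda)\eta(0)+\int_0^\lambda(\lambda-\theta)^{\tilde\iota-1}\Psi_2(\lambda-\theta)\varrho(\theta,\nu_\theta)d\theta,\ \lambda>0,$$
with $\mathcal{Q}\nu=\eta$ on $[-\kappa,0]$. Since $|a|\ge1$, we have $|a|^\uparrow(-\lambda)\le1$ for $\lambda\ge0$. Hence the bounds (\ref{dnkhfkhis3}) and (\ref{dnkfhfkhis2}) from $(J_1)$ show that the term $\Psi_1(\lambda)\eta(0)$ lies in $BC_{\tau,a}$. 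Moreover, $\|a^\uparrow(-\lambda)[\Psi_1(\lambda+\tau)-a\Psi_1(\lambda)]\eta(0)\|\to0$, so this term is $S$-asymptotically $(\tau,a)$-periodic.

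First I would show that $\mathcal{Q}$ maps into $BC_{\tau,a}$. Write $\varrho(\theta,\nu_\theta)=\varrho(\theta,0)+[\varrho(\theta,\nu_\theta)-\varrho(\theta,0)]$ and use $\|\Psi_2\|\le M/\Gamma(\tilde\iota)$. The $\varrho(\theta,0)$ part is bounded after weighting by $(J_2)$. For the difference part, $(J_4)$ gives $\|\nu_\theta\|_{[-\kappa,0]}\le C|a|^\uparrow(\theta)\|\nu\|_{b\tau a}$. The factor $|a|^{\uparrow}(-\kappa)$ coming from the shift is harmless, and $\|\eta\|$ contributes a constant. Bounding with (\ref{nkfn}) then yields a finite weighted supremum.

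The same estimate gives the contraction. For $\nu,\tilde\nu$ in the space, the difference vanishes on $[-\kappa,0]$, and $\|\nu_\theta-\tilde\nu_\theta\|\le \sup_{s\le\theta}|a|^\uparrow(s)\,\|\nu-\tilde\nu\|_{b\tau a}\le |a|^\uparrow(\theta)\|\nu-\tilde\nu\|_{b\tau a}$, using $|a|\ge1$ (monotonicity of $|a|^\uparrow$) and the vanishing on $[-\kappa,0]$. Therefore
$$\|\mathcal{Q}\nu-\mathcal{Q}\tilde\nu\|_{b\tau a}\le \frac{M}{\Gamma(\tilde\iota)}\sup_{\lambda\ge0}|a|^\uparrow(-\lambda)\int_0^\lambda\frac{|a|^\uparrow(\theta)L(\theta)}{(\lambda-\theta)^{1-\tilde\iota}}d\theta\,\|\nu-\tilde\nu\|_{b\tau a},$$
and the constant is $<1$ by (\ref{nkfn}).

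The main obstacle is showing that $\mathcal{Q}\nu$ is $S$-asymptotically $(\tau,a)$-periodic. I would split
$$\int_0^{\lambda+\tau}(\cdots)\varrho(\theta,\nu_\theta)d\theta-a\int_0^\lambda(\cdots)\varrho(\theta,\nu_\theta)d\theta=\int_0^\tau(\lambda+\tau-\theta)^{\tilde\iota-1}\Psi_2(\lambda+\tau-\theta)\varrho(\theta,\nu_\theta)d\theta+\int_0^\lambda(\lambda-\theta)^{\tilde\iota-1}\Psi_2(\lambda-\theta)[\varrho(\theta+\tau,\nu_{\theta+\tau})-a\varrho(\theta,\nu_\theta)]d\theta.$$
The first piece tends to zero: its integrand is bounded on $[0,\tau]$, and the exponential decay of $\Psi(\cdot)$ from $(J_1)$ makes $(\lambda+\tau-\theta)^{\tilde\iota-1}\|\Psi_2(\lambda+\tau-\theta)\|\to0$ uniformly. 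For the second piece, write
$$\varrho(\theta+\tau,\nu_{\theta+\tau})-a\varrho(\theta,\nu_\theta)=[\varrho(\theta+\tau,\nu_{\theta+\tau})-\varrho(\theta,\nu_{\theta+\tau})]+[\varrho(\theta,\nu_{\theta+\tau})-a\varrho(\theta,\nu_\theta)].$$
The last bracket is bounded by $N(\theta)\|\nu_{\theta+\tau}-a\nu_\theta\|$ via $(J_5)$, and $a^\uparrow(-\theta)\|\nu_{\theta+\tau}-a\nu_\theta\|\to0$ because $\nu$ is $S$-asymptotically periodic. Splitting $\int_0^\lambda$ into $[0,T]$ and $[T,\lambda]$, the weighted $N$-bound in $(J_5)$ then gives smallness; on $[0,T]$ the kernel decay kills the contribution. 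For the first bracket, $(J_3)$ gives only pointwise convergence in the argument $\vartheta$. I would therefore use the Lipschitz bound $(J_4)$ to approximate $\nu_{\theta+\tau}$ by finitely many fixed histories (relative compactness must be handled, possibly via the $(\tau,a)$-scaling structure), or, more simply, accept the paper's evident intended reading that $(J_3)$ holds uniformly on bounded sets after weighting. Combined with the $L^1$ bound (\ref{hejek}) and dominated convergence, this gives the limit zero. The Banach contraction principle then yields the unique solution.
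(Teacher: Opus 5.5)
Your outline is essentially the paper's proof. It uses the same splitting $\nu=z+\varsigma$ with $z=\Psi_1(\cdot)\eta(0)$ on $[0,\infty)$ and the same contraction constant $M\Phi_1/\Gamma(\tilde{\iota})$ from $(J_4)$ in $\|\cdot\|_{b\tau a}$. It also uses the same three-piece estimate of $a^{\uparrow}(-\lambda)[(\mathcal{V}\varsigma)(\lambda+\tau)-a(\mathcal{V}\varsigma)(\lambda)]$: the $[0,\tau]$ piece, the $(J_3)$ piece and the $(J_5)$ piece.

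The step you leave open is a genuine gap, and it is the paper's gap too. The paper simply asserts that $(J_3)$ yields $T'$ with $\|\varrho(\theta+\tau,\varsigma_{\theta+\tau}+z_{\theta+\tau})-\varrho(\theta,\varsigma_{\theta+\tau}+z_{\theta+\tau})\|<\mu$ for $\theta\ge T'$. This does not follow from a hypothesis that is pointwise in $\vartheta$ when the argument moves with $\theta$. Pointwise convergence need not be uniform on bounded sets: take $\varrho(\lambda,\vartheta)=\langle\vartheta(0),v(\lambda)\rangle w$ in a Hilbert space, with unit vectors $v(\lambda)\rightharpoonup 0$ but $\|v(\lambda+\tau)-v(\lambda)\|\not\to 0$. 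So the paper contains no argument you could borrow here.

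Your compactness route will not close it in general. Even after rescaling, the histories of an arbitrary element of your space form a set that is merely bounded in $C([-\kappa,0],\mathbb{E})$, not relatively compact: there is no equicontinuity, and $\mathbb{E}$ may be infinite-dimensional. So take your second option, but state it as an explicit extra hypothesis. Your scaling idea shows that no weighted version is needed.

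Putting $\vartheta_1=a\vartheta_2$ in $(J_5)$ gives $\varrho(\lambda,a\vartheta)=a\varrho(\lambda,\vartheta)$. Write $\theta=k\tau+r$ with $r\in[0,\tau)$ and $\nu_{\theta+\tau}=a^k y_\theta$. Using $|a|\ge 1$, one has $\|y_\theta\|_{[-\kappa,0]}\le |a|^2\|\nu\|_{b\tau a}=:R$. Hence $\|\varrho(\theta+\tau,\nu_{\theta+\tau})-\varrho(\theta,\nu_{\theta+\tau})\|\le |a|^{\uparrow}(\theta)\sup_{\|y\|_{[-\kappa,0]}\le R}\|\varrho(\theta+\tau,y)-\varrho(\theta,y)\|$. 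It therefore suffices to assume that the limit in $(J_3)$ is uniform for $\vartheta$ in bounded sets. Then, since $|a|^{\uparrow}(\theta-\lambda)\le 1$, the $[T',\lambda]$ part is at most $\mu\int_0^\infty s^{\tilde{\iota}-1}\|\Psi_2(s)\|ds\le \mu M/\alpha$ by (\ref{hejek}).

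A smaller slip: on $[0,\tau]$ the integrand $\varrho(\theta,\nu_\theta)$ need not be bounded. $L$ and $N$ are only locally integrable (in the paper's second example $L(\lambda)\sim\lambda^{-1/4}$ near $0$), and $\varrho(\cdot,0)$ is controlled only through $(J_2)$. The integrand is, however, integrable there, by $(J_2)$ at $\lambda=\tau$ and local integrability of $L$. Since the kernel is bounded by $\frac{M}{\Gamma(\tilde{\iota})}\lambda^{\tilde{\iota}-1}$ on that interval, the piece still tends to zero. The paper gets the same conclusion from $\lambda+\tau-\theta\ge\frac{\lambda+\tau}{\tau}(\tau-\theta)$.
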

\begin{proof}
By $(J_2),(J_4)$ and $(J_5)$, we can set
\[
\Phi_1=\displaystyle{\sup_{\lambda\geq 0}|a|^{\uparrow}(-\lambda)\int_0^\lambda \frac{|a|^{\uparrow}(\theta)L(\theta)}{(\lambda-\theta)^{1-\tilde{\iota}}}d\theta},
\]
\[
\Phi_2=\displaystyle{\sup_{\lambda\geq 0}|a|^{\uparrow}(-\lambda)\int_0^\lambda \frac{|a|^{\uparrow}(\theta)N(\theta)}{(\lambda-\theta)^{1-\tilde{\iota}}}d\theta},
\]
and
$$\displaystyle{\Lambda=\sup_{\lambda\geq 0}|a|^{\uparrow}(-\lambda)\int_0^\lambda \frac{\|\varrho(\theta,0)\|}{(\lambda-\theta)^{1-\tilde{\iota}}}d\theta.}$$
For $\eta\in C([-\kappa,0],\mathbb{E})$ given in Eq. (\ref{equation1}), the function $z(\lambda)$ is defined as
\[
z(\lambda)=
\left\{
\begin{array}{lr}
\eta(\lambda),\ \qquad \lambda\in [-\kappa,0],\\
\Psi_1(\lambda)\eta(0),\ \lambda\geq 0.
\end{array}
\right.
\]
Then $z\in C([-\kappa,\infty),\mathbb{E})$. Let
$$\nu(\lambda)=\varsigma(\lambda)+z(\lambda),\ \lambda\in[-\kappa,\infty).$$
One can easily verify that $\nu$ fulfills Eq. $(\ref{equation12})$ exactly when $\varsigma_0=0$, and
$$\varsigma(\lambda)=\int_0^\lambda (\lambda-\theta)^{\tilde{\iota}-1}\Psi_2(\lambda-\theta)\varrho(\theta,\varsigma_\theta+z_\theta)d\theta,\ \lambda\geq 0.$$
For any $\varsigma\in BC_{\tau,a,0}(\mathbb{E})$, we have
\begin{eqnarray*}
\|\varsigma_\lambda+z_\lambda\|_{[-\kappa,0]}&\leq& \displaystyle{\sup_{-\kappa\leq \theta\leq 0}}|a|^{\uparrow}(\lambda)|a|^{\uparrow}(-\lambda)\|\varsigma(\lambda+\theta)\|+\displaystyle{\sup_{-\kappa\leq \theta\leq 0}}\|z(\lambda+\theta)\|\\
&\leq& |a|^{\uparrow}(\lambda)\displaystyle{\sup_{0\leq \theta\leq \lambda}}|a|^{\uparrow}(-\theta)\|\varsigma(\theta)\|+M\|\eta(0)\|+\|\eta\|_{[-\kappa,0]}\\
&\leq& \psi(\lambda),
\end{eqnarray*}
where $\psi(\lambda):=|a|^{\uparrow}(\lambda)\|\varsigma\|_{b\tau a}+M\|\eta(0)\|+\|\eta\|_{[-\kappa,0]}$.

Thus, by means of $(J_5)$ we derive
\begin{equation}\label{dbjdbj}
\|\varrho(\lambda,\varsigma_\lambda+z_\lambda)\|\leq N(\lambda)\|\varsigma_\lambda+z_\lambda\|_{[-\kappa,0]}+|a|\|\varrho(\lambda,0)\|\leq \psi(\lambda)N(\lambda)+|a|\|\varrho(\lambda,0)\|.
\end{equation}

Let us define the operator $\mathcal{V}:\ BC_{\tau,a,0}(\mathbb{E})\rightarrow BC_{\tau,a,0}(\mathbb{E})$ via:
\[
(\mathcal{V}\varsigma)(\lambda)=
\left\{
\begin{array}{lr}
0,\ \qquad \qquad \qquad \qquad \qquad \qquad \qquad \ \ \ \ \lambda\in [-\kappa,0],\\
\displaystyle{\int_0^\lambda(\lambda-\theta)^{\tilde{\iota}-1}\Psi_2(\lambda-\theta)\varrho(\theta,\varsigma_\theta+z_\theta)d\theta,\ \lambda\geq 0.}
\end{array}
\right.
\]
Based on (\ref{dnkhfkhis3}), (\ref{nkfn}), (\ref{dbjdbj}) and $(J_2)$, we obtain
\begin{eqnarray}
&&\sup_{\lambda\geq 0}\left\|\displaystyle{a^{\uparrow}(-\lambda)\int_0^\lambda(\lambda-\theta)^{\tilde{\iota}-1}\Psi_2(\lambda-\theta)\varrho(\theta,\varsigma_\theta+z_\theta)d\theta}\right\|\nonumber\\
&\leq& \sup_{\lambda\geq 0}\frac{M}{\Gamma(\tilde{\iota})}\cdot |a|^{\uparrow}(-\lambda)\int_0^\lambda \psi(\theta)N(\theta)(\lambda-\theta)^{\tilde{\iota}-1}d\theta+\frac{M\Lambda|a|}{\Gamma(\tilde{\iota})}<\infty.
\end{eqnarray}
Thus, the well-definedness of the operator $\mathcal{V}$ is established. We observe that $z$ plus any fixed point of $\mathcal{V}$ is a mild solution to Eq. (\ref{equation1}). Next, we proceed to demonstrate that $\mathcal{V}$ maps into the space ${\rm{SAP}}_{\tau,a,0}([0,\infty),\mathbb{E})$.  To this end, we require the following claim.
\begin{Claim}\label{dbdkj}
Let $h(\lambda):\ [-\kappa,\infty)\rightarrow \mathbb{E}$ be a function with $h_0\in C([-\kappa,0],\mathbb{E})$ and $h|_{[0,\infty)}\in {\rm{SAP}}_{\tau,a}([0,\infty),\mathbb{E})$.
Then the function $ h_{\lambda}\in {\rm{SAP}}_{\tau,a}([0,\infty),C([-\kappa,0],\mathbb{E}))$.
\end{Claim}
For the sake of completeness, we include a brief proof of Claim \ref{dbdkj}: Obviously for any fixed $\lambda\geq 0$, $h_\lambda\in C([-\kappa,0],\mathbb{E})$, so there exists $\lambda_0\in [-\kappa,0]$ such that
$$\|h_{\lambda+\tau}-ah_\lambda\|_{[-\kappa,0]}=\sup_{-\kappa\leq \theta\leq 0}\|h(\lambda+\tau+\theta)-ah(\lambda+\theta)\|=\|h(\lambda+\tau+\lambda_0)-ah(\lambda+\lambda_0)\|.$$
Considering the substitution $\varkappa=\lambda+\lambda_0$, we then have
$$\lim_{\lambda\rightarrow \infty}\|a^\uparrow(-\lambda)[h(\lambda+\tau+\lambda_0)-ah(\lambda+\lambda_0)]\|=\lim_{\varkappa\rightarrow \infty}|a|^\uparrow(\lambda_0)\|a^\uparrow(-\varkappa)[h(\varkappa+\tau)-ah(\varkappa)]\|=0,$$
which is the desired result of Claim \ref{dbdkj}.

 Given any $\varsigma\in {\rm{SAP}}_{\tau,a,0}([0,\infty),\mathbb{E})$, it follows from (\ref{dnkfhfkhis2}) that the restriction
$z|_{[0,\infty)}$ belongs to ${\rm{SAP}}_{\tau,a}([0,\infty),\mathbb{E})$. Subsequently, by applying Claim \ref{dbdkj}, we deduce that the function $z_t$ lies in  ${\rm{SAP}}_{\tau,a}([0,\infty),C([-\kappa,0],\mathbb{E})).$ And we can calculate
\begin{eqnarray*}
&&\left\|a^{\uparrow}(-\lambda)[(\mathcal{V}\varsigma)(\lambda+\tau)-a(\mathcal{V}\varsigma)(\lambda)]\right\|\\
&=&\Bigg\|a^{\uparrow}(-\lambda)\Bigg[\int_0^\tau\frac{\Psi_2(\lambda+\tau-\theta)}{(\lambda+\tau-\theta)^{1-\tilde{\iota}}}\varrho(\theta,\varsigma_\theta+z_\theta)d\theta+\int_\tau^{\lambda+\tau}\frac{\Psi_2(\lambda+\tau-\theta)}{(\lambda+\tau-\theta)^{1-\tilde{\iota}}}\varrho(\theta,\varsigma_\theta+z_\theta)d\theta\\
&&-a\int_0^\lambda(\lambda-\theta)^{\tilde{\iota}-1}\Psi_2(\lambda-\theta)\varrho(\theta,\varsigma_\theta+z_\theta)d\theta\Bigg]\Bigg\|\\
&\leq&\frac{M}{\Gamma(\tilde{\iota})}|a|^{\uparrow}(-\lambda)\int_0^\tau(\lambda+\tau-\theta)^{\tilde{\iota}-1}\left[\psi(\theta)N(\theta)+|a|\|\varrho(\theta,0)\|\right]d\theta\\
&&\ +\int_0^\lambda|a|^{\uparrow}(-\lambda)(\lambda-\theta)^{\tilde{\iota}-1}\|\Psi_2(\lambda-\theta)\|\|\varrho(\theta+\tau,\varsigma_{\theta+\tau}+z_{\theta+\tau})-\varrho(\theta,\varsigma_{\theta+\tau}+z_{\theta+\tau})\|d\theta\\
&&\ +\frac{M|a|^{\uparrow}(-\lambda)}{\Gamma(\tilde{\iota})}\int_0^\lambda(\lambda-\theta)^{\tilde{\iota}-1}\|\varrho(\theta,\varsigma_{\theta+\tau}+z_{\theta+\tau})-a\varrho(\theta,\varsigma_\theta+z_\theta)\|d\theta\\
&=&\mathbb{J}_1(\lambda)+\mathbb{J}_2(\lambda)+\mathbb{J}_3(\lambda).
\end{eqnarray*}
By virtue of the inequality $\displaystyle{\lambda+\tau-\theta\geq \frac{\lambda+\tau}{\tau}(\tau-\theta)}$, we get
\begin{eqnarray*}
&&\int_0^\tau(\lambda+\tau-\theta)^{\tilde{\iota}-1}\left[\psi(\theta)N(\theta)+|a|\|\varrho(\theta,0)\|\right]d\theta\\
&\leq& \left(\frac{\tau}{\lambda+\tau}\right)^{1-\tilde{\iota}}\int_0^\tau(\tau-\theta)^{\tilde{\iota}-1}\left[\psi(\theta)N(\theta)+|a|\|\varrho(\theta,0)\|\right]d\theta,
\end{eqnarray*}
which, along with $|a|\geq 1$, leads to
\begin{center}
$\mathbb{J}_1(\lambda)\rightarrow 0$  as $\lambda\rightarrow \infty$.
\end{center}

From $(\ref{dbjdbj})$, we have
\begin{eqnarray*}
&&\|\varrho(\theta+\tau,\varsigma_{\theta+\tau}+z_{\theta+\tau})-\varrho(\theta,\varsigma_{\theta+\tau}+z_{\theta+\tau})\|\\
&\leq&\psi(\theta+\tau)[N(\theta+\tau)+N(\theta)]+|a|[\|\varrho(\theta+\tau,0)\|+\|\varrho(\theta,0)\|].
\end{eqnarray*}

According to $(J_3)$, given any $\mu>0$, we can find a constant $T^{\prime}>0$ satisfying
$$\|\varrho(\theta+\tau,\varsigma_{\theta+\tau}+z_{\theta+\tau})-\varrho(\theta,\varsigma_{\theta+\tau}+z_{\theta+\tau})\|<\mu,\ \theta\geq T^{\prime}.$$

In light of $(\ref{hejek})$ and $\displaystyle{\lambda-\theta\geq \frac{\lambda}{T^{\prime}}(T^{\prime}-\theta)}$, we obtain
\begin{eqnarray}
&&\mathbb{J}_2(\lambda)\nonumber\\
&\leq&\frac{M}{\Gamma(\tilde{\iota})}\int_0^{T^{\prime}}\frac{\psi(\theta+\tau)N(\theta+\tau)+|a|\|\varrho(\theta+\tau,0)\|+\psi(\theta+\tau)N(\theta)
+|a|\|\varrho(\theta,0)\|}{(\lambda-\theta)^{1-\tilde{\iota}}}d\theta\nonumber\\
&&+\mu\int_{T^{\prime}}^\lambda(\lambda-\theta)^{\tilde{\iota}-1}\|\mathbb{W}(\lambda-\theta)\|d\theta\nonumber\\
&\leq&\frac{M}{\Gamma(\tilde{\iota})}\Bigg[\int_\tau^{T^{\prime}+\tau}\frac{\psi(\theta)N(\theta)+|a|\|\varrho(\theta,0)\|}{(\lambda+\tau-\theta)^{1-\tilde{\iota}}}d\theta
+\int_0^{T^{\prime}}\frac{\psi(\theta+\tau)N(\theta)+|a|\|\varrho(\theta,0)\|}{(\lambda-\theta)^{1-\tilde{\iota}}}d\theta\Bigg]\nonumber\\
&&+\frac{M\mu}{\alpha}\nonumber\\
&\leq&\frac{M}{\Gamma(\tilde{\iota})}\left(\frac{T^{\prime}+\tau}{\lambda}\right)^{1-\tilde{\iota}}\int_0^{T^{\prime}+\tau}\frac{\psi(\theta)N(\theta)
+|a|\|\varrho(\theta,0)\|}{(T^{\prime}+\tau-\theta)^{1-\tilde{\iota}}}d\theta\nonumber\\
&&+\frac{M}{\Gamma(\tilde{\iota})}\left(\frac{T^{\prime}}{\lambda}\right)^{1-\tilde{\iota}}\int_0^{T^{\prime}}\frac{\psi(\theta+\tau)N(\theta)+|a|\|\varrho(\theta,0)\|}{(T^{\prime}-\theta)^{1-\tilde{\iota}}}d\theta
+\frac{M\mu}{\alpha}\nonumber\\
&\leq&\frac{2M}{\Gamma(\tilde{\iota})}\left(\frac{T^{\prime}+\tau}{\lambda}\right)^{1-\tilde{\iota}}|a|^\uparrow(T^{\prime}+\tau)[\psi(T^{\prime}+\tau)\Phi_2+|a|\Lambda]+\frac{M\mu}{\alpha}.\nonumber
\end{eqnarray}

Given that $\varsigma_\lambda+z_\lambda\in {\rm{SAP}}_{\tau,a}([0,\infty),C([-\kappa,0],\mathbb{E}))$, it follows that there exists a number $T^{\prime\prime}>0$ satisfying
$$|a|^\uparrow(-\theta)\|(\varsigma_{\theta+\tau}+z_{\theta+\tau})-a(\varsigma_\theta+z_\theta)\|_{[-\kappa,0]}\leq \mu,\ \theta\geq T^{\prime\prime},$$
which leads to the following estimation of $\mathbb{J}_3(\lambda)$:
\begin{eqnarray*}
\mathbb{J}_3(\lambda)&\leq&\frac{2M|a|}{\Gamma(\tilde{\iota})}\left[\int_0^{T^{\prime\prime}}\frac{\psi(\theta+\tau)N(\theta)}{(\lambda-\theta)^{1-\tilde{\iota}}}d\theta
+|a|\int_0^{T^{\prime\prime}}\frac{\|\varrho(\theta,0)\|}{(\lambda-\theta)^{1-\tilde{\iota}}}d\theta\right]\\
&&+\frac{M\mu |a|^{\uparrow}(-\lambda)}{\Gamma(\tilde{\iota})}\int_{T^{\prime\prime}}^\lambda\frac{|a|^\uparrow(\theta)N(\theta)}{(\lambda-\theta)^{1-\tilde{\iota}}}d\theta\\
&\leq&\frac{M}{\Gamma(\tilde{\iota})}\left[2|a||a|^\uparrow(T^{\prime\prime})(\psi(T^{\prime\prime}+\tau)\Phi_2
+|a|\Lambda)\left(\frac{T^{\prime\prime}}{\lambda}\right)^{1-\tilde{\iota}}+\mu\Phi_2\right].
\end{eqnarray*}
Based on the above analysis, we can establish that as $\lambda\rightarrow\infty$ and $\mu\rightarrow 0$,
\begin{center}
$\left\|a^{\uparrow}(-\lambda)[(\mathcal{V}\varsigma)(\lambda+\tau)-a(\mathcal{V}\varsigma)(\lambda)]\right\|\rightarrow 0$.
\end{center}
Hence, the operator $\mathcal{V}$ maps the space ${\rm{SAP}}_{\tau,a,0}([0,\infty),\mathbb{E})$ into itself.
Moreover, for any $\varsigma_i\in {\rm{SAP}}_{\tau,a,0}([0,\infty),\mathbb{E})\ (i=1,2)$, we have
\begin{eqnarray*}
\|a^\uparrow(-\lambda)[(\mathcal{V}\varsigma_1)(\lambda)-(\mathcal{V}\varsigma_1)(\lambda)]\|&\leq& \frac{M|a|^\uparrow(-\lambda)}{\Gamma(\tilde{\iota})}\int_0^\lambda(\lambda-\theta)^{\tilde{\iota}-1}L(\theta)\|\varsigma_{1\theta}-\varsigma_{2\theta}\|_{[-\kappa,0]}d\theta\\
&\leq&\frac{M|a|^\uparrow(-\lambda)}{\Gamma(\tilde{\iota})}\int_0^\lambda(\lambda-\theta)^{\tilde{\iota}-1}L(\theta)|a|^\uparrow(\theta)d\theta\|\varsigma_1-\varsigma_2\|_{b\tau a}.
\end{eqnarray*}
Therefore,
$$\|\mathcal{V}\varsigma_1-\mathcal{V}\varsigma_2\|_{b\tau a}\leq \frac{M\Phi_1}{\Gamma(\tilde{\iota})}\|\varsigma_1-\varsigma_2\|_{b\tau a},$$
which, per condition $(J_4)$, implies that $\mathcal{V}$ is a contraction mapping. The complete proof follows by invoking the
Banach contraction mapping principle.
\end{proof}

\section{Applications}
In this section, we give two examples to illustrate the main results obtained in previous sections.
\begin{Example}
Let $\mathbb{E}=\big\{\nu(\iota)|\ \nu(\iota)\in L^2[0,\pi],\ \nu(\iota)$ is a real function$\big\}$ and define its natural norm and inner product  for $\nu_i\in \mathbb{E}\ (i=1,2)$ by
$$\|\nu_1\|=\left(\int_{0}^{\pi}\nu_1^2(\iota)d\iota\right)^{\frac{1}{2}},\ \langle \nu_1,\nu_2\rangle=\int_{0}^{\pi}\nu_1(\iota)\nu_2(\iota)d\iota.$$

We analyze the time-fractional evolution  system below in the Banach space $\mathbb{E}$:
 \begin{equation}\label{dndjk}
\left\{
\begin{array}{lr}
^{c}\mathcal{D}^{\tilde{\iota}}_{\lambda} \nu(\lambda,\iota)=\Upsilon\nu(\lambda,\iota)+\varrho(\lambda,\nu(\lambda,\iota)),\ \lambda> 0,\ \iota\in [0,\pi]\\
\nu(0,\iota)=\nu_0(\iota),\ \iota\in [0,\pi],
\end{array}
\right.
\end{equation}
where $\tilde{\iota}\in (0,1)$, and $$\varrho(\lambda,\iota)=\varrho_1(\lambda,\iota)+\varrho_2(\lambda,\iota):=h_1(\lambda){\rm{arctan}}(-\|h_2(\lambda)\iota\|)+h_3(\lambda)h_4(\lambda)
{\rm{cos}}(\|h_5(\lambda)\iota\|),$$ $h_1,\ h_4\in \mathcal{P}_{\tau,a}([0,\infty),\mathbb{E})$, $h_2,\ h_5\in \mathcal{P}_{\tau,1/a}([0,\infty),\mathbb{R})$, $h_3\in L^\infty([0,\infty),\mathbb{R}) \cap L^1([0,\infty),\mathbb{R})$. $\xi(\lambda)=a^\uparrow(-\lambda)+b^\uparrow(-\lambda)$ with $|b|>|a|>1$. We have $\xi\in \widetilde{\mathbb{U}}$.

The operator $\Upsilon:\ D(\Upsilon)\subset \mathbb{E}\rightarrow \mathbb{E}$ is defined by
$$D(\Upsilon):=\left\{\nu\in \mathbb{E}:\ \nu^{\prime\prime}\in \mathbb{E},\ \nu(0)=\nu(\pi)=0\right\},\ \Upsilon\nu=\frac{\partial^2\nu}{\partial \iota^2}.$$

 It is known that the operator $-\Upsilon$ possesses a discrete spectrum whose eigenvalues take the form $k\ (k\in \mathbb{N}_{+})$, and the corresponding normalized eigenfunctions are expressed as $\varpi_k=\sqrt{\frac{2}{\pi}}{\rm{sin}}(k\iota).$ Furthermore, the operator $\Upsilon$ generates a compact analytic semigroup
$\{\Psi(\lambda)\}_{\lambda\geq 0}$ on $\mathbb{E}$, and $$\Psi(\lambda)\nu=\displaystyle{\sum_{k=1}^{\infty}}{\rm{e}}^{-k^2\lambda}\langle \nu,\varpi_k \rangle \varpi_k,\ \nu\in \mathbb{E}.$$

It is straightforward to verify  that $\|\Psi(\lambda)\|\leq {\rm{e}}^{-\lambda},\ \lambda\geq 0$. Thus, we choose $M=1.$
Take $w(\lambda)=1+\lambda^2$. Clearly, $w_r\sim w$ for any $r\in \mathbb{R}$.

 Now, recalling  $h_1,\ h_4\in \mathcal{P}_{\tau,a}([0,\infty),\mathbb{E})$, $h_2,\ h_5\in \mathcal{P}_{\tau,1/a}([0,\infty),\mathbb{R})$, $h_3\in L^\infty([0,\infty),\mathbb{R})\cap L^1([0,\infty),\mathbb{R})$, we can obtain
\begin{eqnarray*}
&&\sup_{\lambda\geq 0}\left\|\xi(\lambda)\varrho(\lambda,\iota)\right\|\\
&=&\sup_{\lambda\geq 0}\left\|\xi(\lambda)\left[h_1(\lambda){\rm{arctan}}(-\|h_2(\lambda)\iota\|)+h_3(\lambda)h_4(\lambda){\rm{cos}}(\|h_5(\lambda)\iota\|)\right]\right\|\\
&\leq&\frac{\pi}{2}\sup_{\lambda\geq 0}\left\|\xi(\lambda)h_1(\lambda)\right\|+\|h_3\|_{L^\infty([0,\infty),\mathbb{R})}\sup_{\lambda\geq 0}\left\|\xi(\lambda)h_4(\lambda)\right\|<\infty,
\end{eqnarray*}
$$\varrho_1(\lambda+\tau,a\iota)=h_1(\lambda+\tau){\rm{arctan}}(-\|h_2(\lambda+\tau)a\iota\|)=ah_1(\lambda){\rm{arctan}}(-\|h_2(\lambda)\iota\|)=a\varrho_1(\lambda,\iota),$$
and
\begin{eqnarray*}
&&\lim_{\ell\rightarrow \infty}\frac{1}{\ell}\int_0^\ell \left\|\xi(\lambda)\varrho_2(\lambda,\iota)\right\|d\lambda=\lim_{\ell\rightarrow \infty}\frac{1}{\ell}\int_0^\ell \left\|\xi(\lambda)h_3(\lambda)h_4(\lambda){\rm{cos}}(\|h_5(\lambda)\iota\|)\right\|d\lambda=0,
\end{eqnarray*}
independently of $\iota\in \mathbb{E}$. Hence, $\varrho$ meets condition $(I_1)$. Moreover, it can be easily verified that
$$h_1(\lambda)h_2(\lambda),\ h_4(\lambda)h_5(\lambda)\in \mathcal{P}_\tau([0,\infty),\mathbb{E})\subseteq BC([0,\infty),\mathbb{E}).$$
\end{Example}
And for any $\iota_i\in \mathbb{E}\ (i=1,2)$, we have
\begin{eqnarray*}
&&\|\varrho(\lambda,\iota_1)-\varrho(\lambda,\iota_2)\|\\
&=&\|h_1(\lambda){\rm{arctan}}(-\|h_2(\lambda)\iota_1\|)+h_3(\lambda)h_4(\lambda){\rm{cos}}(\|h_5(\lambda)\iota_1\|)-h_1(\lambda){\rm{arctan}}(-\|h_2(\lambda)\iota_2\|)\\
&&-h_3(\lambda)h_4(\lambda){\rm{cos}}(\|h_5(\lambda)\iota_2\|)\|\\
&\leq&\|h_1(\lambda)[{\rm{arctan}}(-\|h_2(\lambda)\iota_1\|)-{\rm{arctan}}(-\|h_2(\lambda)\iota_2\|)]+\|h_3(\lambda)h_4(\iota)[{\rm{cos}}(\|h_5(\lambda)\iota_1\|)\\
&&-{\rm{cos}}(\|h_5(\lambda)\iota_2\|)]\|\\
&\leq&\|h_1(\lambda)h_2(\lambda)\|\|\iota_1-\iota_2\|+\|h_3(\lambda)h_4(\lambda)h_5(\lambda)\|\|\iota_1-\iota_2\|\\
&\leq&\left(\|h_1h_2\|_{L^\infty([0,\infty),\mathbb{E})}+\|h_3\|_{L^\infty([0,\infty),\mathbb{R})}\cdot\|h_4h_5\|_{L^\infty([0,\infty),\mathbb{E})}\right)\|\iota_1-\iota_2\|.
\end{eqnarray*}
Therefore, by Theorem \ref{hdjkhd} and Remark \ref{hddjlak}, there exists a unique mild solution $\nu\in {\rm{PSAP}}_{\xi,\tau,a}([0,\infty),\mathbb{E},w)$ for Syst. $(\ref{dndjk})$, provided that
$$\left(\|h_1h_2\|_{L^\infty([0,\infty),\mathbb{E})}+\|h_3\|_{L^\infty([0,\infty),\mathbb{R})}\cdot\|h_4h_5\|_{L^\infty([0,\infty),\mathbb{E})}\right)\cdot \left\|\xi(\cdot){\cdot}^{\tilde{\iota}-1}\Psi_2(\cdot)\right\|_{L^1((0,\infty),[0,\infty))}<1.$$
Fig. \ref{fig:6} depicts the solution $\nu(\cdot,\cdot)$ of Syst. $(\ref{dndjk})$ when $\tilde{\iota}=0.4$, $h_1(\lambda)=h_4(\lambda)={\rm{sin}}\left(\frac{2\pi \lambda}{\tau}\right)a^{\frac{\lambda}{\tau}}$, $h_2(\lambda)=h_5(\lambda)={\rm{cos}}\left(\frac{2\pi \lambda}{\tau}\right)a^{-\frac{\lambda}{\tau}}$ with $\tau=1,\ a=2$, $h_3(\lambda)=\frac{1}{\lambda^2+1}$, and $\nu_0\equiv0$.

\begin{Example}
Let $\mathbb{E}=\mathbb{R}^2$. For any vector $v=(v_1,v_2)^T\in \mathbb{E}$, define its norm as
$$\|v\|=|v_1|+|v_2|.$$
For a given matrix $\mathfrak{M}=(m_{\imath\jmath})_{2\times 2}$, its norm is defined as
$$\|\mathfrak{M}\|=\max_{1\leq \jmath\leq 2}\sum_{\imath=1}^2 |m_{\imath\jmath}|.$$
We study the finite delay time-fractional evolution system in the space $\mathbb{E}$, given by:
\begin{equation}\label{equation4}
\left\{
\begin{array}{lr}
^c\mathcal{D}^{\frac{1}{2}}_\lambda \nu_1(\lambda)=-2\nu_1(\lambda)+\frac{{\rm{sin}} 2\pi \lambda}{11(\lambda+\lambda^{\frac{7}{3}})^{\frac{1}{4}}}[2\nu_1(\lambda+\theta)+3\nu_2(\lambda+\theta)],\ \lambda>0,\\
^c\mathcal{D}^{\frac{1}{2}}_\lambda \nu_2(\lambda)=-2\nu_2(\lambda)+\frac{{\rm{cos}} 2\pi \lambda}{11(\lambda+\lambda^{\frac{7}{3}})^{\frac{1}{4}}}[\nu_1(\lambda+\theta)+4\nu_2(\lambda+\theta)],\ \lambda>0,\\
\nu_1(\lambda)=\nu_2(\lambda)=0.1,\ -1\leq \lambda\leq 0.
\end{array}
\right.
\end{equation}
Expressed in vector form, Syst. $(\ref{equation4})$ becomes
\[
\left\{
\begin{array}{lr}
^c\mathcal{D}^{\frac{1}{2}}_\lambda \nu(\lambda)=\Upsilon\nu(\lambda)+\widetilde{\mathcal{R}}(\lambda,\nu_\lambda),\ \lambda> 0,\\
\nu(\lambda)=0.1,\ \lambda\in[-1,0],
\end{array}
\right.
\]
where
$$\nu(\lambda)=\left(\nu_1(\lambda),\nu_2(\lambda)\right)^T,$$
$$\Upsilon=\left( {\begin{array}{cc}
-2&0\\
0&-2\\
\end{array}} \right),$$
$$\widetilde{\mathcal{R}}(\lambda,\nu_\lambda)=B(\lambda)\varrho(\nu_\lambda),$$
$$B(\lambda)=
\left( {\begin{array}{cc}
\frac{2{\rm{sin}} 2\pi \lambda}{11(\lambda+\lambda^{\frac{7}{3}})^{\frac{1}{4}}}&\frac{3{\rm{sin}} 2\pi \lambda}{11(\lambda+\lambda^{\frac{7}{3}})^{\frac{1}{4}}}\\
\frac{{\rm{cos}} 2\pi \lambda}{11(\lambda+\lambda^{\frac{7}{3}})^{\frac{1}{4}}}&\frac{4{\rm{cos}} 2\pi \lambda}{11(\lambda+\lambda^{\frac{7}{3}})^{\frac{1}{4}}}\\
\end{array}} \right),
$$
and
$$\varrho(\nu_\lambda)=(\nu_{1\lambda},\nu_{2\lambda})^T.$$
As established in \cite{Pazy}, the operator $\Upsilon$ generates a bounded semigroup, which can be written as
$$\Psi(\lambda)=e^{\Upsilon\lambda}=\left( {\begin{array}{cc}
{\rm{e}}^{-2\lambda}&0\\
0&{\rm{e}}^{-2\lambda}\\
\end{array}} \right).$$
Moreover, this semigroup satisfies the exponential bound
$$\|\Psi(\lambda)\|\leq {\rm{e}}^{-2\lambda},\ \lambda\geq 0,$$
where the bound parameters are given by $M=1$ and $\alpha=2$.

For $\vartheta_i\in C([-1,0],\mathbb{E})\ (i=1,2)$, one can easily observe that
$$\|\widetilde{\mathcal{R}}(\lambda,\vartheta_1)-\widetilde{\mathcal{R}}(\lambda,\vartheta_2)\|\leq\frac{5}{11(\lambda+\lambda^{\frac{7}{3}})^{\frac{1}{4}}}\|\vartheta_1-\vartheta_2\|_{[-1,0]}:=L(\lambda)\|\vartheta_1-\vartheta_2\|_{[-1,0]},$$
and
$$\|\widetilde{\mathcal{R}}(\lambda,\vartheta_1)-a\widetilde{\mathcal{R}}(\lambda,\vartheta_2)\|\leq\frac{5}{11(\lambda+\lambda^{\frac{7}{3}})^{\frac{1}{4}}}\|\vartheta_1-a\vartheta_2\|_{[-1,0]}:=N(\lambda)\|\vartheta_1-a\vartheta_2\|_{[-1,0]}.$$
From
$$\left(\lambda+\lambda^{\frac{7}{3}}\right)^{-\frac{1}{4}}\leq \lambda^{-\frac{1}{4}},$$
$$\int_0^\lambda (\lambda-\theta)^{-\frac{2}{3}}\theta^{-\frac{1}{3}}ds=B\left(\frac{1}{3},\frac{2}{3}\right)=\Gamma\left(\frac{1}{3}\right)\Gamma\left(\frac{2}{3}\right),$$
and
$$\int_0^\infty\frac{d\lambda}{1+\lambda^\upsilon}=\frac{\pi}{\upsilon{\rm{sin}}(\pi/\upsilon)}\ \left(\upsilon\in(1,2)\right),$$
one can conclude $\left(\lambda+\lambda^{\frac{7}{3}}\right)^{-\frac{1}{4}}\in L^1(0,\lambda)$ and
\begin{eqnarray*}
&&|a|^{\uparrow}(-\lambda)\int_0^\lambda |a|^{\uparrow}(\theta)(\lambda-\theta)^{-\frac{1}{2}}L(\theta)d\theta\\
&=&|a|^{\uparrow}(-\lambda)\int_0^\lambda |a|^{\uparrow}(\theta)(\lambda-\theta)^{-\frac{1}{2}}N(\theta)d\theta\\
&\leq&\frac{5}{11}\int_0^\lambda (\lambda-\theta)^{-\frac{1}{2}}\theta^{-\frac{1}{4}}\left(1+\theta^{\frac{4}{3}}\right)^{-\frac{1}{4}}d\theta\\
&\leq&\frac{5}{11}\left(\int_0^\lambda \left[(\lambda-\theta)^{-\frac{1}{2}}\theta^{-\frac{1}{4}}\right]^{\frac{4}{3}}d\theta\right)^{\frac{3}{4}}\cdot \left(\int_0^\lambda \left(\left(1+\theta^{\frac{4}{3}}\right)^{-\frac{1}{4}}\right)^4d\theta\right)^{\frac{1}{4}}\\
&\leq&\frac{5}{11}\left(\Gamma\left(\frac{1}{3}\right)\Gamma\left(\frac{2}{3}\right)\right)^{\frac{3}{4}}\left(\frac{3\sqrt{2}\pi}{4}\right)^{\frac{1}{4}}.
\end{eqnarray*}
So,
$$\frac{1}{\Gamma\left(\frac{1}{2}\right)}\sup_{\lambda\geq 0}|a|^{\uparrow}(-\lambda)\int_0^t |a|^{\uparrow}(\theta) (\lambda-\theta)^{-\frac{1}{2}}L(\theta)d\theta\leq \frac{5\left(\Gamma\left(\frac{1}{3}\right)\Gamma\left(\frac{2}{3}\right)\right)^{\frac{3}{4}}}{11\Gamma\left(\frac{1}{2}\right)}\left(\frac{3\sqrt{2}\pi}{4}\right)^{\frac{1}{4}}\approx 0.91<1.$$
Clearly, $(J_3)$ is satisfied. Also, we have
\begin{eqnarray*}
\|\widetilde{\mathcal{R}}(\lambda+1,\vartheta)-\widetilde{\mathcal{R}}(\lambda,\vartheta)\|&\leq& \frac{5}{11}\left[\frac{1}{((\lambda+1)+(\lambda+1)^{\frac{7}{3}})^{\frac{1}{4}}}+\frac{1}{(\lambda+\lambda^{\frac{7}{3}})^{\frac{1}{4}}}\right]\|\vartheta\|_{[-1,0]}\\
&\rightarrow&0,\ t\rightarrow\infty.
\end{eqnarray*}
Consequently, the conditions in Theorem \ref{cnknn} are satisfied. Therefore, according to Theorem \ref{cnknn}, Syst. $(\ref{equation4})$ admits a unique $\emph{S}$-asymptotically $(1,a)$-periodic mild solution for each $|a|\geq1$. Fig. \ref{fig:7} demonstrates the curved surface of $(\cdot,\nu_1(\cdot),\nu_2(\cdot))$ of Syst. $(\ref{equation4})$ on $[0,\infty)$.
\end{Example}

\begin{figure}[h!]
\centering
  {
    \raisebox{0.08\height}{
     \begin{minipage}{7cm}
            \includegraphics[height=4.75cm,width=6.5cm]{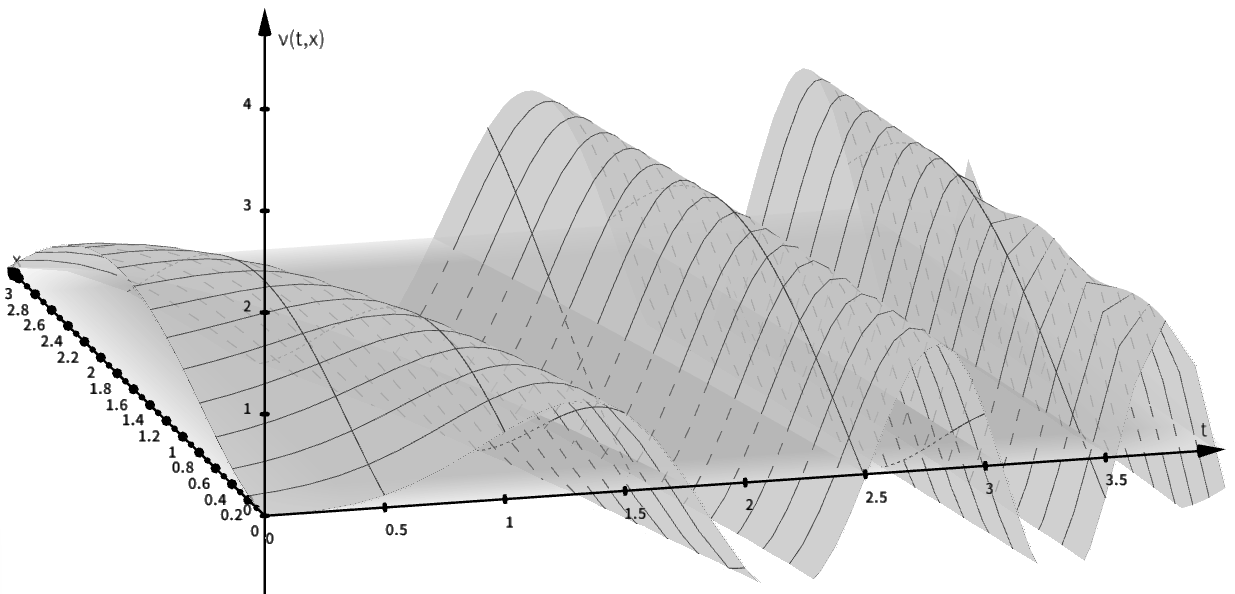}
            \caption{Solution $\nu(\cdot,\cdot)$ of Eq. $(\ref{dndjk})$}
            \label{fig:6}
       \end{minipage}}
      \begin{minipage}{7cm}
            \includegraphics[height=4.1cm,width=6.5cm]{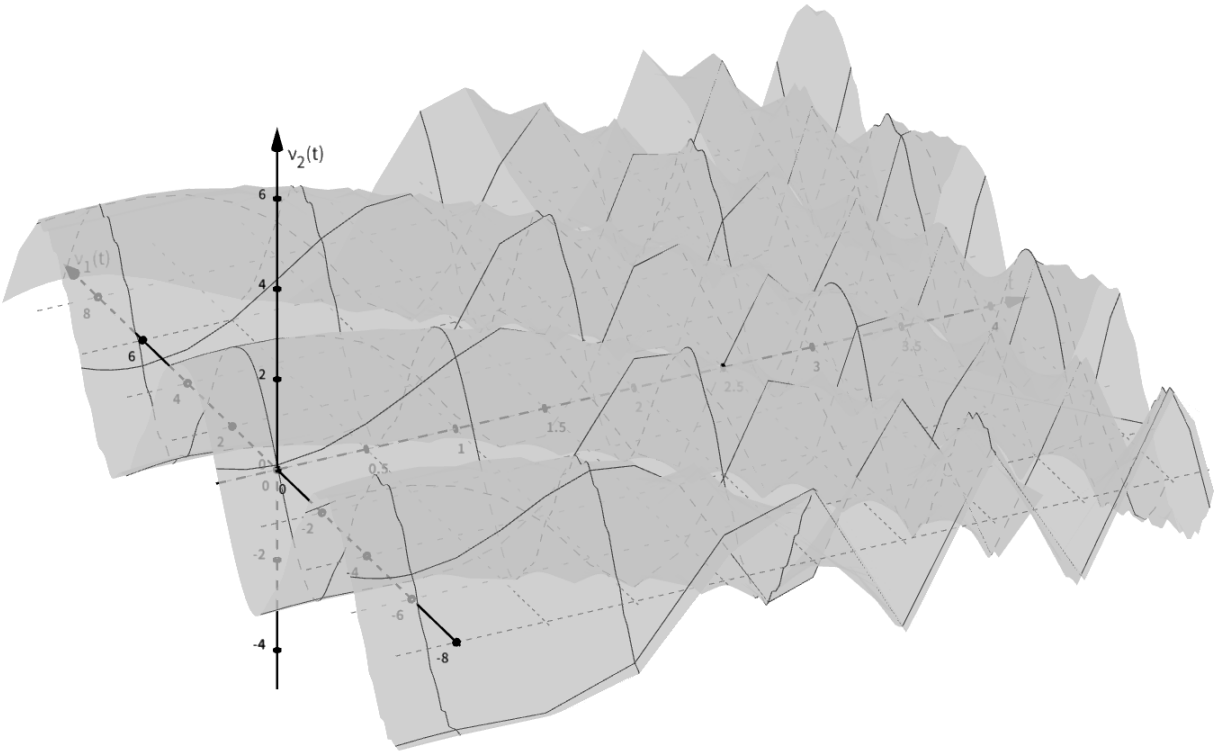}
            \caption{Curved surface of $(\cdot,\nu_1(\cdot),\nu_2(\cdot))$ of Eq. $(\ref{equation4})$ on $[0,\infty)$}
            \label{fig:7}
       \end{minipage}
   }
\end{figure}

\vspace{0.4cm}

{\bf  Authors' contribution statement}

Jin Liang: Writing - review \& editing, Supervision, Project administration, Methodology, 
Funding acquisition, Formal analysis, Conceptualization.  Yunyi Mu: Writing - original draft, Writing - review \& editing, Conceptualization, Methodology, Investigation, Formal analysis.
Gaston Mandata N'Gu\'er\'ekata: Writing - review \& editing, Formal analysis, Conceptualization.
Ti-Jun Xiao: Writing - review \& editing, Supervision, Project administration, Methodology, Formal analysis, Conceptualization.

\vspace{0.4cm}

{\bf Funding Declaration Statement} 

The authors declare the following financial interests/personal relationships which may be considered as potential competing
interests: Jin Liang reports financial support was provided by National Natural Science Foundation of China (Grant No. 12371116). Ti-Jun Xiao reports financial supports were provided by National Natural Science Foundation of China (Grant No. 12171094) and the Shanghai Key Laboratory for Contemporary Applied Mathematics (Grant No. 08DZ2271900). If there are other authors, they declare that they have no known competing financial interests or personal relationships that could have appeared to influence the work reported in this paper.

\vspace{0.4cm}

{\bf Conflict of interest}

The authors have no competing interests to declare.

\vspace{0.4cm}

{\bf Availability of Data and Materials}

No data and materials were generated or used during the current study.

\end{document}